\theoremstyle{thmstyleone}%
\theoremstyle{thmstyletwo}%
\newtheorem*{remark}{Remark}%
\newtheorem{lemma}{Lemma}
\theoremstyle{thmstylethree}%
\newtheorem*{claim}{Claim}
\begin{document}

\title[Multi-shooting parameterization methods for invariant manifolds and heteroclinics of 2 DOF Hamiltonian Poincar\'e maps, with applications to celestial resonant dynamics]{Multi-shooting parameterization methods for invariant manifolds and heteroclinics of 2 DOF Hamiltonian Poincar\'e maps, with applications to celestial resonant dynamics}

\author*[1]{\fnm{Bhanu} \sur{Kumar}}\email{bhkumar@umich.edu}
\affil[1]{\orgdiv{Department of Mathematics}, \orgname{University of Michigan}, \orgaddress{\street{530 Church St}, \city{Ann Arbor}, \state{MI}, \postcode{48109}, \country{USA}}}

\abstract{Studying 2 degree-of-freedom (DOF) Hamiltonian dynamical systems often involves the computation of stable \& unstable manifolds of periodic orbits, due to the homoclinic \& heteroclinic connections they can generate. Such study is generally facilitated by the use of a Poincar\'e section, on which the manifolds form 1D curves. A common method of computing such manifolds in the literature involves linear approximations of the manifolds, while the author's past work has developed a nonlinear manifold computation method under the assumption that the periodic orbit intersects the chosen Poincar\'e section only once. However, linear manifold approximations may require large amounts of numerical integration for globalization, while the single-intersection assumption of the previous nonlinear method often does not hold. 

In this paper, a parameterization method is developed and implemented for computing such stable and unstable manifolds even in the case of multiple periodic orbit intersections with a chosen Poincar\'e section. The method developed avoids the need to compose polynomials with Poincar\'e maps -- a requirement of some previous related algorithms -- by using an intermediate step involving fixed-time maps. The step yields curves near the chosen Poincar\'e section lying on the flow's periodic orbit manifolds, which are used to parameterize and compute the Poincar\'e map manifold curves themselves. These last curves and parameterizations in turn enable highly-accurate computation of heteroclinics between periodic orbits. 
The method has already been used for various studies of resonant dynamics in the planar circular restricted 3-body problem, which are briefly summarized in this paper, demonstrating the algorithm's utility for real-world investigations. }

\keywords{Manifolds, Parameterization Method, Resonance, Three-Body Problem}
\pacs[MSC Classification]{37M21, 37C27, 37C29, 70M20}

\maketitle

\section{Introduction}


Given a 2 DOF Hamiltonian system, oftentimes its dynamical analysis requires computation of stable and unstable manifolds of unstable periodic orbits, along with detection of the manifolds' intersections resulting in heteroclinic or homoclinic connections. Various methods have been developed for computing the stable/unstable manifolds of periodic orbits in such systems, e.g. the flow-based Chebyshev-Taylor parameterization methods of \cite{chebTaylor}. Parameterization methods \cite{CabreFontichLlave, haroetal} are a general class of nonlinear methods for computing various types of invariant objects in dynamical systems. They allow computing local stable/unstable manifolds significantly more accurately than the (linear) monodromy-matrix eigenvector approximations commonly used in the literature, e.g. \cite{Anderson2010, Anderson2011, vaqueroHowell}. The resulting representations of the manifolds are accurate in a  much larger neighborhood of the base periodic orbit, as compared to linear manifold approximations. This yields local manifolds requiring less numerical integration to globalize. 

If one fixes the Hamiltonian energy and reduces the dynamics to a 2D Poincar\'e section, the stable/unstable manifolds of any periodic orbit will be 1D curves emanating from the intersection points of the base periodic orbit with the section; the number of such intersection points will naturally depend on the section chosen. If a periodic orbit intersects a given section only once, the orbit will correspond to a fixed point of the Poincar\'e map; for this case, a parameterization method for computing accurate Taylor series parameterizations of the manifolds was presented in \cite{kumar2021journal}, yielding a single 1D curve on the Poincar\'e section for each of the stable and unstable manifolds. In applications, oftentimes a section is chosen such that some periodic orbit of interest only intersects the section at a single point. However, such sections may not have good transversality properties, in the sense that this section may have tangencies to the Hamiltonian vector field at many points -- including at points of the global stable/unstable manifolds of the original periodic orbit. This in turn can introduce discontinuities in the computed manifold curves when globalized.

The aforementioned problem of discontinuities is illustrated most clearly by a number of studies carried out on the well-known planar circular restricted 3-body problem model (PCRTBP). The PCRTBP is a 2 DOF Hamiltonian system which played an important role in the development of Hamiltonian mechanics, and is still important for applications today. It approximates the motion of a spacecraft or other small celestial body under the gravitational influence of a planet-moon or star-planet pair, such as Earth-Moon or Sun-Earth. The Poincar\'e section used in most past PCRTBP studies (e.g. \cite{Anderson2010, Anderson2011, vaqueroHowell, KoLoMaRo}) has been a fixed $x$ or fixed $y$ section, often with some additional constraint on the signs of $x$ and $\dot y$. For example, Anderson and Lo \cite{Anderson2010} use a $y = 0$, $x < 0$, $\dot y > 0$ Poincar\'e surface of section to study a class of PCRTBP periodic orbits known as \emph{unstable resonant orbits}; these orbits intersect the aforementioned Poincar\'e section only once. However, in \cite{kumar2021journal}, we found that the manifolds of such resonant orbits experience tangencies to that section, resulting in manifold curves with discontinuous jumps in several places. 

A way to avoid experiencing such discontinuities in the computed manifold curves is to choose a different Poincar\'e section with better transversality to the flow in the region of interest, so that the manifolds no longer have points of tangency to the section in that region. This, however, can require choosing a section having multiple intersection points with each periodic orbit of interest. For example, in the PCRTBP, there is a quantity $\nu$ known as \emph{osculating true anomaly} that, in most of the phase space (including the unstable resonant orbits), is always increasing; thus, a Poincar\'e section formed by fixing $\nu$ will be transverse to the PCRTBP flow at all points in this phase space region. However, most unstable resonant periodic orbits intersect this fixed-$\nu$ section not only at a single point, but at multiple points. Thus, while this Poincar\'e section is better for dynamical analysis, the accurate stable/unstable manifold parameterization methods and analysis techniques described in our previous paper \cite{kumar2021journal} no longer work with this section. 

If one chooses a surface of section such that the periodic orbits of interest intersect the section multiple times, then the periodic orbit of the flow will also be a periodic orbit of the corresponding Poincar\'e map. Parameterization methods for stable/unstable periodic orbits of maps were developed in \cite{gonzalezJames}, under the assumption that the map has a form that can be composed with Taylor series to yield another Taylor series (a step which needs to be computationally implemented). However, in our case, composing a Poincar\'e map with Taylor series is quite a difficult step, since the map is defined by propagating a system of differential equations rather than the closed-form algebraic expressions considered in \cite{gonzalezJames}. While it is theoretically possible to computationally apply a Poincar\'e map to Taylor series (e.g. \cite{perezthesis, gimeno2023}), this step greatly increases the complexity of the algorithms, so it would be beneficial to avoid the aforementioned computation. 

In this paper, we develop accurate methods for computing stable and unstable manifolds of periodic orbits in 2 DOF Hamiltonian Poincar\'e maps, as well as heteroclinic connections between them. This is all carried out with a view towards applications in the PCRTBP to finding transitions between resonant orbits. The methods developed for computing the manifolds involves a multi-shooting type parameterization method that yields high-order polynomial approximations for curves of stable/unstable manifold points, while avoiding applying a Poincar\'e map to Taylor series. Finally, we are able to combine the computed polynomials with a Poincar\'e section to detect and compute heteroclinic connections. We have successfully implemented and used these tools already in a number of studies \cite{rawat2025preprint, kumar2024aug, kumar2024iac} on resonant dynamics in various celestial systems, which we also briefly review as a demonstration of the methods' efficacy. 

\section{Background and Models} \label{backgroundsection}

In this section, we give some background on the parameterization method for invariant manifolds as well as on the PCRTBP and mean motion resonant orbits. Readers familiar with the parameterization method may skip Section \ref{paramethodsection}; readers familiar with the PCRTBP and with osculating orbital elements may skip Section \ref{modelsection}. Section \ref{mmrsection} briefly discusses mean motion resonances and their overlap. 

\subsection{The Parameterization Method for Invariant Manifolds} \label{paramethodsection}

The following standard description of the parameterization method is reproduced identically from our previous paper \cite{kumar2021journal}. The parameterization method is a technique in dynamical systems useful for the computation of several types of invariant geometric objects, including invariant tori as well as stable and unstable manifolds of fixed points, periodic orbits, and tori. It works in both Hamiltonian as well as non-Hamiltonian systems. Haro et al. \cite{haroetal} provide an excellent reference for many applications of this method. The essential idea is that if one has a map $F: M \rightarrow M$ where $M$ is some manifold, and knows that there is an $F$-invariant object diffeomorphic to some model manifold $\mathcal{K}$, then one can solve for an injective immersion $W:\mathcal K \rightarrow M$ and a diffeomorphism $f: \mathcal K \rightarrow \mathcal K$ such that the invariance equation
\begin{equation}  \label{invariancequation}   F(W(s)) = W(f(s)) \end{equation}
holds for all $s \in \mathcal K$. $W$ is called the parameterization of the invariant manifold; Equation \eqref{invariancequation} simply states that $F$ maps the image $W(\mathcal K)$ into itself, so that $W(\mathcal K)$ is the invariant object in the full ambient manifold $M$. The dynamics inside $W(\mathcal K)$ are then conjugate to $f$, their representation on the model manifold $\mathcal K$.

\subsection{Planar Circular Restricted 3-Body Problem} \label{modelsection}
The PCRTBP \cite{kumar2022} describes the motion of a small particle (e.g. a spacecraft) under the gravitational influence of two much larger bodies of masses $m_{1}$ and $m_{2}$, which revolve about their barycenter in a circular orbit (e.g. a planet and moon). Time, length, and mass units are normalized so that the distance between $m_{1}$ and $m_{2}$ becomes 1, their period of revolution becomes $2 \pi$, and $\mathcal{G}(m_{1}+m_{2})$ becomes 1 (where $\mathcal G$ is the gravitational constant). Defining a mass ratio $\mu = \frac{m_{2}}{m_{1}+ m_{2}}$, one uses a rotating non-inertial Cartesian coordinate system centered at the $m_{1}$-$m_{2}$ barycenter such that $m_{1}$ and $m_{2}$ are always on the $x$-axis, at  $(-\mu,0)$ and  $(1-\mu,0)$ respectively. In the planar CRTBP, we also assume that the spacecraft moves in the same plane as $m_{1}$ and $m_{2}$. Then, the equations of motion are 2 degree-of-freedom (DOF) Hamiltonian \cite{celletti}:

\begin{equation} \label{pcrtbpH_EOM} \dot x = \frac{\partial H}{\partial p_{x}} \quad \dot y = \frac{\partial H}{\partial p_{y}} \quad \quad \dot p_{x} = -\frac{\partial H}{\partial x} \quad \dot p_{y} = -\frac{\partial H}{\partial y} \end{equation}

\begin{equation}  \label{pcrtbpH} H(x,y,p_x,p_{y})= \frac{p_{x}^{2}+p_{y}^{2}}{2} + p_{x}y -p_{y}x - \frac{1-\mu}{r_{1}} - \frac{\mu}{r_{2}} \end{equation}
where $r_{1} = \sqrt{(x+\mu)^{2} + y^{2}}$ and $r_{2} = \sqrt{(x-1+\mu)^{2} + y^{2}} $ are the distances from the spacecraft to $m_{1}$ and $m_{2}$ respectively. The momenta $p_{x}, p_{y}$ are the spacecraft velocity components in an inertial reference frame; they are related to the rotating (non-inertial) frame velocities $\dot x, \dot y$ as $ p_{x} =  \dot x - y$, $p_{y} = \dot y + x$. 

There are two important properties of Eq. \eqref{pcrtbpH_EOM}-\eqref{pcrtbpH} to note. First of all, the Hamiltonian in Eq. \eqref{pcrtbpH} is autonomous and is thus an integral of motion. Hence, PCRTBP trajectories are restricted to 3D energy submanifolds of the state space satisfying $H(x,y,p_{x}, p_{y})=$ constant. The quantity $C = -2H$ is referred to as the \emph{Jacobi constant}, and is generally used in lieu of $H$ to specify energy levels. The second property is that the equations of motion have a time-reversal symmetry. Namely, if $(x(t), y(t), p_{x}(t), p_{y}(t), t)$ is a solution of Eq. \eqref{pcrtbpH_EOM}-\eqref{pcrtbpH} for $t > 0$, then $(x(-t), -y(-t), -p_{x}(-t), p_{y}(-t), t)$ is a solution for $t < 0$. 

\subsubsection{True anomaly and synodic Delaunay coordinates} \label{delaunaySection}

The PCRTBP for $\mu = 0$ is simply the 2-body (Kepler) problem in a rotating coordinate frame, since this corresponds to $m_{2} = 0$. In the Kepler problem written in an \emph{inertial} (non-rotating) reference frame, it is known that all bounded orbits form ellipses with focus at $m_{1}$. One can thus transform to non-Cartesian coordinates known as \emph{orbital elements} which describe the size, shape, and orientation of said ellipse, as well as the position of the spacecraft on the ellipse at a given time. This last property is described by the \emph{true anomaly} $\nu \in \mathbb{T}$, defined as the angle formed by spacecraft position, $m_{1}$, and the orbit ellipse periapse (point of closest approach to $m_{1}$ over the orbit). When considering only planar orbits restricted to the $xy$-plane, there will be 3 other orbital elements: the orbital ellipse's semimajor axis $a$, the ellipse's eccentricity $e$, and its \emph{longitude of periapse} $g_{0} \in \mathbb{T}$ -  the angle between the orbital ellipse's periapse, $m_{1}$, and the positive $x$-axis. These 3 elements $a$, $e$, and $g_{0}$ remain constant for all time in the Kepler problem. For more details on these standard definitions and transformations to/from Cartesian coordinates, we refer the reader to \cite{bmw}. 

While the above discussion used a non-rotating inertial reference frame, one can easily transform rotating frame positions and velocities to an inertial frame; in fact, the momenta $p_{x}$, $p_{y}$ defined in Section \ref{modelsection} \emph{are} velocities with respect to the non-rotating coordinate frame whose $x$ and $y$ axes are aligned with those of the rotating frame at a given instant\footnote{This means that we consider a series of \emph{different} inertial frames, one at each time, all of which are related to each other through rotations. This time-variation of the inertial frame used for each transformation has no effect on $a$, $e$, and $\nu$. The only orbital element changed is the argument of periapse, which was previously defined relative to the $x$-axis of a single inertial frame, but now is being computed relative to a time-varying series of different inertial frames' $x$-axes (aligned with the rotating frame for each time). The notation $g$ reflects its difference from $g_{0}$ which was defined relative a single fixed inertial frame}. Thus, after a shift of origin to $m_{1}$, $(x,y,p_{x}, p_{y})$ can be transformed into orbital elements by the same equations used in the inertial frame Kepler problem. Moreover, this transformation, being defined through fixed functions of $(x,y,p_{x}, p_{y})$, is also valid for $\mu > 0$ in the PCRTBP, yielding what are known as \emph{osculating} orbital elements $(a, e, g, \nu)$ -- the orbital elements of the elliptical orbit that would occur if $m_{2}$ suddenly vanished, as illustrated in Figure \ref{fig:osculatingEls}.
\begin{figure}
\centering
\includegraphics[width=0.5\textwidth]{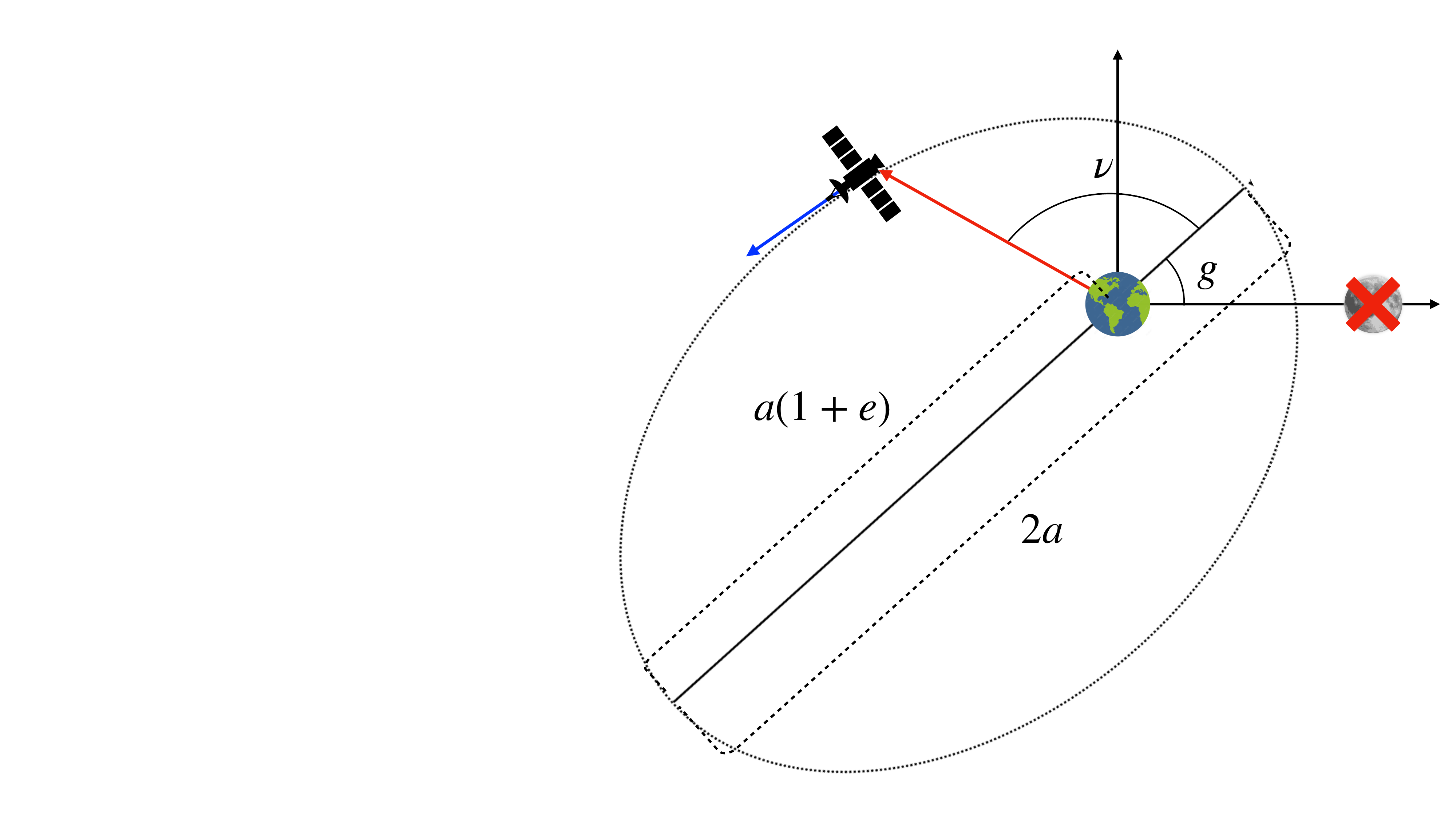}
\caption{  \label{fig:osculatingEls}Illustration of osculating orbital elements $(a,e,g,\nu)$} 
\end{figure} 
Here, $a$, $e$, and $\nu$ have the same geometric interpretation as the inertial frame Kepler problem, while $g = g_{0} - t$ is the longitude of periapse \emph{with respect to the rotating frame x-axis} (see footnote). See Celletti \cite{celletti} for more details on $g$. 

The key property of $\nu$ is that in the $\mu = 0$ Kepler problem, $\dot \nu > 0$ for all time and points in the phase space. When $\mu > 0$, it is no longer guaranteed that $\dot \nu > 0$ in the entire phase space, but in practice this is still the case except for in a small region near $m_{2}$. Thus, $\nu$ makes an ideal variable for defining Poincar\'e sections outside of a neighborhood of $m_{2}$; depending on the orbits being considered, a $\nu = 0$ or $\nu = \pi$ section usually is the best choice, so henceforth these sections are referred to as periapse and apoapse sections, respectively. A useful fact is that $\sigma(x,y,p_{x}, p_{y}) = [x + \mu, y] \cdot [p_{x}, p_{y}+ \mu] = 0$ if and only if $\nu = 0$ or $\nu = \pi$. 

As a final note, the osculating orbital elements help define another set of coordinates called \emph{synodic Delaunay variables} $(L, G, \ell, g)$. The latter are important as they are action-angle coordinates \cite{celletti} for the $\mu = 0$ PCRTBP, and thus enable the theory of near-integrable Hamiltonian systems \cite{morbyBook} to be applied to the system. They are defined via $L = \sqrt{(1-\mu)a}$, $G = L\sqrt{1-e^{2}}$, and $\ell = \ell(\nu, e) = $ mean anomaly \cite{bmw}. $g$ remains as defined earlier. Although this paper will use Cartesian coordinates for computations, synodic Delaunay variables can help visualize and interpret results in line with perturbation theory.

\subsection{Unstable Resonant Orbits} \label{mmrsection}

Resonant motions are ubiquitous in celestial systems.
Among the most important resonant phenomena for space missions is that of mean motion resonance (MMR). Roughly speaking\footnote{More rigorously, MMRs are defined through studying the dynamics of  combinations $m g + n \ell $, $m,n \in \mathbb{Z}$ of the synodic Delaunay variables $\ell$ and $g$ from Section \ref{delaunaySection}. See for instance the book \cite{morbyBook} of Morbidelli for more details.}
, an $m$:$n$ MMR is a region of a celestial system's phase space where one body (in our case the spacecraft) makes approximately $m$ revolutions around some large central body (in this case $m_{1}$) in the same time that another body (here, $m_{2}$) makes $n$ revolutions around the same central body. In the spacecraft-moon MMR case, since this is a relation between the spacecraft's orbital period and the fixed period of the moon $m_2$, Kepler's third law \cite{bmw} means that different MMRs correspond to specific spacecraft semimajor axis values, one for each MMR. 

In multi-body celestial systems, MMR regions contain both stable and unstable resonant orbits with various topological properties. Of these, the unstable resonant orbits are of special interest for low-cost orbit transfers; if the unstable manifold of an orbit contained in one MMR intersects the stable manifold of an orbit contained in another MMR, then one gets a zero-fuel heteroclinic trajectory from the first MMR to the second. This \emph{MMR overlap} in turn yields a natural change of spacecraft semimajor axis. Chirikov's overlap criterion \cite{chirikov1960, chirikov1979} states that overlap of MMRs is the key driver of global transport across phase space in celestial systems, and determines the values of semimajor axis a spacecraft can reach without using fuel. 

In the PCRTBP, unstable resonant orbits occur in 1-parameter families of unstable periodic orbits, with one orbit for each Jacobi constant $C$ across some range of $C$ values \cite{kumar2021journal}. For the $m$:$n$ MMR, the resonant periodic orbits will have periods of \emph{approximately} but not exactly $2 \pi n$ (recall that the orbital period of $m_1$ and $m_2$ is $2\pi$ in the normalized units of the PCRTBP). An $m$:$n$ resonant orbit is called an \emph{interior} resonant orbit if $m>n$, or an \emph{exterior} resonant orbit if $m<n$. To study interior resonant orbits, a $\nu = 0$ Poincar\'e section works best, whereas for exterior orbits a $\nu = \pi$ section is better. This avoids having the periodic orbits intersect the section during their closest approaches to $m_{2}$, which is the area where $\dot \nu$ can occasionally briefly become negative and affect transversality to the section. With this choice of Poincar\'e section, an $m$:$n$ resonant periodic orbit will pass through the section $m$ times in one period.

\section{Problem setting, summary, and solution overview}

\subsection{Setting and notation} \label{settingSection}
Consider a 2 DOF Hamiltonian function $H(x, y, p_{x}, p_{y}): M \rightarrow \mathbb{R}$ defined on the phase space $M = \mathbb{R}^{4}$. Assume that the symplectic form is given by the standard symplectic matrix $J = \begin{bmatrix}
0_{2 \times 2}   & I_{2 \times 2}   \\ -I_{2 \times 2}  &  0_{2 \times 2} \end{bmatrix} $ in the usual Euclidean metric on $\mathbb{R}^{4}$. Then, the equations of motion will be 
\begin{equation} \label{H_EOM} \dot x = \frac{\partial H}{\partial p_{x}} \quad\quad \dot y = \frac{\partial H}{\partial p_{y}} \quad \quad \dot p_{x} = -\frac{\partial H}{\partial x} \quad\quad \dot p_{y} = -\frac{\partial H}{\partial y} \end{equation}
Let $\Phi_{t}(\bold{x}) : \mathbb{R}^{4} \rightarrow \mathbb{R}^{4}$ be the time-$t$ flow map of the Hamiltonian dynamical system given by Equation \eqref{H_EOM}, which propagates a point $\bold{x} \in \mathbb{R}^{4}$ by the equations of motion for time $t$. Denote $\mathcal M_C = \{ (x,y,p_{x},p_{y}) : H(x,y,p_{x},p_{y}) = C\}$ as the fixed-energy 3D submanifold of $\mathbb{R}^{4}$ for any given energy level $C$; $\mathcal M_{C}$ will be invariant under (and thus tangent to) the flow. Assume also that we choose some Poincar\'e surface of section $\Sigma$ for the flow of $H$, transverse to the flow in the region of interest. $\Sigma$ will be 3D, and its intersection $\Sigma_C = \Sigma \cap \mathcal M_C$ with $\mathcal M_{C}$ for any $C$ will be 2D in that region. Denote $P: \Sigma \rightarrow \Sigma$ to be the flow's Poincar\'e return map on $\Sigma$, and its restriction to $P$-invariant $\Sigma_C$ as $P_C = \Sigma_C \rightarrow \Sigma_C$. We assume that $\Sigma$ can be represented as the set of $\bold{x} \in \mathbb{R}^{4}$ satisfying  \emph{crossing conditions} $\sigma(\bold{x})=0$ and $\beta(\bold{x})>0$ for some continuous functions $\sigma, \beta: \mathbb{R}^{4} \rightarrow \mathbb{R}$.

Now, consider an unstable periodic orbit $\gamma = \gamma(t)$ of the flow of $H$, having initial condition $\gamma(0) = \bold{x}_{0}$ and period $T$. Hence, $\Phi_{T}(\bold{x}_{0}) = \bold{x}_{0}$, and the orbit's monodromy matrix $D_{\bold{x}} \Phi_{T}(\bold{x}_{0})$ has real eigenvalues $\lambda_{u}, \lambda_{s}$ with $|\lambda_{u}| > 1$  and $\lambda_{s} = \lambda_{u}^{-1}$, in addition to a double eigenvalue of 1 (since the flow direction is always a unit eigenvector of the monodromy matrix, and eigenvalues of symplectic matrices occur in reciprocal pairs). The unstable periodic orbit will thus have 2D stable and unstable manifolds $W^{s}(\gamma)$ and $W^{u}(\gamma)$, which along with $\gamma$ itself must be entirely contained within $\mathcal M_{C}$ with $C = H(\bold{x}_{0})$. Now, assume that $\gamma$ intersects $\Sigma$. Due to the transversality of $\Sigma$ to the flow, over the course of a single period, the orbit $\gamma$ will intersect $\Sigma$ at some finite number $m \in \mathbb{N}$ of discrete points $X(k)$, $k = 0, \dots, m-1$. For the same reason, the 2D manifolds $W^{s}(\gamma)$ and $W^{u}(\gamma)$ will intersect the 2D fixed-energy section $\Sigma_C$, and thus also the 3D section $\Sigma$, along 1D curves. This last fact can be seen by dimension counting; both $W^{u}(\gamma)$ and $\Sigma_C$ are 2D manifolds contained wholly within $\mathcal M_C$, which is 3D. Hence, adding codimensions, the intersection $W^{u}(\gamma) \cap \Sigma_C$ must have codimension 2 and thus dimension 1 inside $\mathcal M_C$, and hence also inside $\Sigma$. The same will similarly hold for $W^{s}(\gamma) \cap \Sigma_C$ as well.

We know that the intersections $W^{u}(\gamma) \cap \Sigma$ and $W^{s}(\gamma) \cap \Sigma$ are 1D submanifolds of $\Sigma$. However, we also know that $\gamma$ intersects $\Sigma$ at $m$ points $X(k)$, each of which will also be contained in $W^{u}(\gamma) \ni \gamma$. Hence, each point $X(k)$ must belong to $W^{u}(\gamma) \cap \Sigma$ which is a 1D manifold; thus, there must be a curve $W^{u}_{p}(k,s): \{0, \dots, m-1\} \times \mathbb{R} \rightarrow \mathbb{R}^{4}$ emanating from each point $X(k)$ such that $W^{u}_{p}(k, 0) = X(k)$ and $Image(W^{u}_{p}) = W^{u}(\gamma) \cap \Sigma$. Similarly for $W^{s}$ there must exist $W^{s}_{p}(k,s)$ such that $Image(W^{s}_{p}) = W^{s}(\gamma) \cap \Sigma$. Henceforth, denote $W^{s}_\Sigma = W^{s}(\gamma) \cap \Sigma$ and $W^{u}_{\Sigma} = W^{u}(\gamma) \cap \Sigma$. Note also that one can interpret $X(k)$ as being a periodic orbit of the Poincar\'e map $P$, with $W^{s}_{p}(k,s)$, $W^{u}_{p}(k,s)$ representing its stable/unstable manifolds.

\begin{remark} 
For the PCRTBP case, $H$ is given by Equation \eqref{pcrtbpH}. For analysis of PCRTBP unstable resonant orbits, as discussed in Section \ref{mmrsection}, $\Sigma$ will be chosen as $\nu=\nu(x,y,p_{x},p_{y}) = \pi$ when studying exterior resonances and $\nu= 0$ when studying interior resonances. Either choice of $\Sigma$ will satisfy $\sigma(x,y,p_{x}, p_{y}) = [x + \mu, y] \cdot [p_{x}, p_{y}+ \mu] = 0$, which occurs if and only if $\nu= 0$ or $\pi$ (recall Section \ref{delaunaySection}); the needed additional condition to distinguish between $\nu= 0$ and $\nu=\pi$ is introduced in Section \ref{sectionSection}. For an unstable $m$:$n$ MMR periodic orbit $\gamma$, it will intersect $\Sigma$ $m$ times, which will thus be the number of points $X(k)$ defined earlier. 
\end{remark}

\subsubsection{Computing the Poincar\'e map $P$ and trajectory intersections with $\Sigma$}\label{sectionSection}

We assumed earlier that the oriented Poincar\'e section $\Sigma$ is $\{\bold{x} \in \mathbb{R}^{4}: \sigma(\bold{x})=0, \beta(\bold{x})>0\}$ for some $\sigma, \beta: \mathbb{R}^{4} \rightarrow \mathbb{R}$. To compute its Poincar\'e map, e.g. in MATLAB or Julia, a continuous $\sigma$ is needed to detect and calculate crossings of trajectories $\bold{x}(t)$ with $\Sigma$ during numerical integration. However, it can occur that not \emph{all} $\bold{x}$ with $\sigma(\bold{x}) = 0$ lie on $\Sigma$; as we discuss later, this is the case for the PCRTBP periapse \& apoapse sections.
 Thus, the condition $\beta(\bold{x})>0$ helps distinguish true crossings of $\Sigma$ from false positives where $\sigma(\bold{x}(t)) = 0$ but $\bold{x}(t) \notin \Sigma$. Often, one can take $\beta(\bold{x})=\dot \sigma(\bold{x})$ or $-\dot \sigma(\bold{x})$, where $\dot \sigma(\bold{x})= \nabla \sigma(\bold{x}) \cdot [\frac{\partial H}{\partial p_{x}}, \, \frac{\partial H}{\partial p_{y}}, \, -\frac{\partial H}{\partial x}, \, -\frac{\partial H}{\partial y} ]$; this simply means that $\sigma$ is increasing (or decreasing) whenever $\bold{x}(t)$ crosses $\Sigma$.

For example, the PCRTBP $\nu = 0$ (periapse) and $\nu= \pi$ (apoapse) sections from Section \ref{delaunaySection} both are zero level sets of $\sigma(x,y,p_{x}, p_{y}) = [x + \mu, y] \cdot [p_{x}, p_{y}+ \mu] = 0$. However, except in a small region near $m_{2}$, $\nu = 0$ only if $\sigma$ crosses zero while increasing, whereas a decreasing $\sigma$ zero-crossing corresponds to $\nu = \pi$. Thus, the condition $\beta>0$ is a fundamental part of distinguishing periapse from apoapse section crossings. 

\begin{remark} 
For computation of trajectory intersections with $\Sigma$, one can use the event detection tools of most modern ODE integration libraries. In MATLAB this is called an ODE {\tt Event}, while in Julia this capability is called a {\tt ContinuousCallback}. One specifies for the numerical integrator to register an event and save the system state every time 1) $\sigma = 0$ AND 2) given a crossing direction, $\dot \sigma > 0$ or $ < 0$.  Both MATLAB and Julia require $\sigma$ as a program input, and can also handle an optional crossing direction input. 

In fact, both MATLAB and Julia can integrate further tests than just $\sigma = 0 $ and $\dot \sigma$ positive/negative into their event detection capabilities. For instance, for the PCRTBP propagated using Cartesian equations \eqref{pcrtbpH_EOM}-\eqref{pcrtbpH} and a $\nu = 0$ section, the most accurate $\Sigma$-intersection test is to not only find when $\sigma = 0$, but to also then directly compute $\nu \in \mathbb{T}$ and verify that the numerical output is near either 0 or $2\pi \approx 6.2832$. One can also similarly include tests checking whether or not $\beta(\bold{x})>0$, even for choices of $\beta(\bold{x})$ other than $\pm \dot \sigma(\bold{x})$. 
\end{remark}

\subsection{Overview of problem and approach} \label{overviewSection}

Many studies of 2 DOF Hamiltonian systems seek to find and characterize heteroclinic connections between periodic orbits. For this, it is necessary to compute the orbits' stable/unstable manifolds as well as their intersections. While unstable periodic orbits have 2D stable/unstable manifolds under the continuous-time flow, computing 2D manifolds and their intersections in the 3D energy submanifold $\mathcal M_C$ requires significantly more computational tools and power than intersecting 1D curves in 2D space. Thus, studies of heteroclinic trajectories in 2 DOF systems generally use a Poincar\'e section $\Sigma$ and its return map $P$ to reduce the flow to Poincar\'e maps $P_C$ on 2D fixed-energy surfaces of section $\Sigma_C = \Sigma \cap \mathcal M_C$. As discussed in the previous section, for a periodic orbit $\gamma$ with 2D stable/unstable manifolds $W^{s}(\gamma)$, $W^{u}(\gamma)$, their intersections $W^{s}_\Sigma$, $W^{u}_\Sigma$ with the 2D surface $\Sigma_C$ will be 1D curves, making detection of heteroclinics much easier. 

The goals of this paper are to 1) present accurate and computationally efficient methods for computing $W^{s}_\Sigma$ and $W^{u}_\Sigma$, and 2) to use the resulting manifolds to accurately compute heteroclinic connections. More specifically, given $H$, $\Sigma$, and $\gamma$ as in Section \ref{settingSection},  goal 1 seeks to accurately compute the functions $W_{p}^{u}(k,s)$ and $W_{p}^{s}(k,s)$ representing $W^{s}_\Sigma$, $W^{u}_\Sigma$, the stable/unstable manifolds' intersection curves with $\Sigma$. Then, given such $W_{1p}^{u}(k_{1},s_{1})$ and $W_{2p}^{s}(k_{2},s_{2})$ representing the unstable and stable manifold $\Sigma$-intersection curves $W^{u}_{1\Sigma}=W^{u}(\gamma_{1}) \cap \Sigma$ and $W^{s}_{2\Sigma}=W^{s}(\gamma_{2}) \cap \Sigma$ of periodic orbits $\gamma_{1}$ and $\gamma_{2}$, goal 2 will seek to find $(k_{1},s_{1}, k_{2},s_{2})$ such that $W_{1p}^{u}(k_{1},s_{1}) = W_{2p}^{s}(k_{2},s_{2})$. This yields a heteroclinic intersection between the manifolds. 

\medskip
Goal 1 is discussed in Section \ref{parambigsection}; it was solved in the case of $m = 1$ in our previous work \cite{kumar2021journal}, where $X(k)$ becomes a fixed point $X(0)$ of $P$ rather than a periodic orbit. For $m>1$, a similar problem with maps defined by closed-form expression rather than Poincar\'e maps was addressed in \cite{gonzalezJames}, but requiring composition of the map with Taylor series, which is a fairly involved process for Poincar\'e maps. We wish to find the manifold curves $W^{s}_\Sigma$, $W^{u}_\Sigma$ without this composition.
Thus, to obtain accurate representations $W_{p}^{u}$ and $W_{p}^{s}$ of periodic orbit stable/unstable manifolds as described in Section \ref{settingSection}, we will follow these basic steps:
\begin{enumerate} 
\item  Given $\gamma$, find its intersection points $X(k)$ with $\Sigma$. Then gather some information about the linearized dynamics near the periodic orbit at each $X(k)$. These include stable and unstable multipliers and eigenvectors as well as tangent directions to the orbit, which are used to construct an adapted frame . 
 
\item Find functions of form $W(k, s): \{0, \dots, m-1\} \times \mathbb{R} \rightarrow \mathbb{R}^{4}$, each of which  yields $m$ curves that lie on $W^{u}(\gamma)$ (or $W^{s}(\gamma)$ ) and \emph{near but not on} $\Sigma$, at least for small $s$. Using a parameterization method with the adapted frame from step 1, solve for $W$ as Taylor series valid in some interval around $s = 0$. 
 
\item Given the parameterizations of curves from step 2, determine their domain of validity around $s = 0$. 

\item Evaluate the parameterizations $W$ on a fine grid of valid $s$ values, and propagate the resulting points to $\Sigma$. This also determines the values of the functions $W^{s}_{p}$ and $W_{p}^{u}$ representing $W^{s}_\Sigma$, $W^{u}_\Sigma$, at least for $s$ within the previously-determined radius of validity (which we refer to as the \emph{local manifolds}). 

\item Compute $W^{s}_{p}$ and $W_{p}^{u}$ for $s$ outside the Taylor parameterizations' domains of validity by applying $P$ or $P^{-1}$ to points from the local manifolds (globalization).
\end{enumerate}
In step 2, by relaxing the requirement for the curves parameterized by $W$ to lie exactly on $\Sigma$, one avoids having to compose Poincar\'e maps with Taylor series, as desired. The calculations carried out in steps 4 and 5 above also yield many points on the curves $W^{s}_\Sigma$, $W^{u}_\Sigma$, which can be plotted on the 2D fixed-energy Poincar\'e section $\Sigma_{C}$ for further analysis. We fully explain the above steps in the following Section \ref{parambigsection}. 
\medskip

Goal 2 is detailed in Section \ref{heteroclinicSection}, but we give a brief preview of its solution steps now as well. Assume that we have found the functions $W_{1p}^{u}(k_{1},s_{1})$ and $W_{2p}^{s}(k_{2},s_{2})$ representing the unstable/stable manifold curves $W^{u}_{1\Sigma}=W^{u}(\gamma_{1}) \cap \Sigma$ and $W^{s}_{2\Sigma}=W^{s}(\gamma_{2}) \cap \Sigma$ of orbits $\gamma_{1}$ and $\gamma_{2}$, along with the coordinates of many points on those curves. Then, to solve the heteroclinic intersection equation $W_{1p}^{u}(k_{1},s_{1}) = W_{2p}^{s}(k_{2},s_{2})$, we will:
\begin{enumerate} 
 
\item Define subsets $U_{N} \subset W^{u}_{1\Sigma}$ and $S_{N} \subset W^{s}_{2\Sigma}$ for $N \in \mathbb{Z}$ such that one can restrict the heteroclinic solution search to $(k_{1},s_{1}, k_{2},s_{2})$ belonging only to subset pairs of form $(U_{N}, S_{N})$ and $(U_{N}, S_{N-1})$. 
 
\item For each subset pair $(U_N, S_M)$, take the known points from the curves $W^{u}_{1\Sigma}$, $W^{s}_{2\Sigma}$ that belong to those subsets, and consider the line segments connecting consecutive points on each curve. Check whether the 2D projection of any line segment from $U_{N}$ intersects any projected segment from $S_{M}$, and if so, store the corresponding $(k_{1},k_{2})$ as well as segment endpoint $s_{1}$, $s_{2}$ values for each intersection point. 

\item For each intersection of line segments found in step 2, use the resulting $(k_{1}, k_{2})$ and bounds on $(s_{1}, s_{2})$ to refine $s_{1}$ and $s_{2}$ by bisection, until the error $\|W_{1p}^{u}(k_{1},s_{1}) - W_{2p}^{s}(k_{2},s_{2})\|$ is within (small) tolerance. 
\end{enumerate}
Assuming that the manifold curve points used to define the line segments of step 2 are spaced closely enough together, the resulting values and bounds for $(k_{1},s_{1}, k_{2},s_{2})$ should be sufficiently accurate for step 3 to quickly yield a highly accurate refined solution. The above steps are detailed in Section \ref{heteroclinicSection}.

\section{Parameterization and computation of invariant manifolds} \label{parambigsection}

Assume that one is given a 2 DOF Hamiltonian $H$ with time-$t$ flow map denoted $\Phi_{t}(\bold{x})$, a periodic orbit $\gamma$ with known initial condition $\gamma(0) = \bold{x}_{0}$ and period $T$, and a Poincar\'e surface of section $\Sigma = \{\bold{x} \in \mathbb{R}^{4}: \sigma(\bold{x})=0, \beta(\bold{x})>0\}$ for some continuous $\sigma, \beta:\mathbb{R}^{4} \rightarrow \mathbb{R}$. In this section, the 5 steps summarized in goal 1 of Section \ref{overviewSection} are explained in detail, describing the methods to accurately compute functions $W^{u}_{p}, W^{s}_{p}: \{0, \dots, m-1\} \times \mathbb{R} \rightarrow \mathbb{R}^{4}$ representing the manifold curves $W^{s}_\Sigma$ and $W^{u}_\Sigma$ defined in Section \ref{settingSection}.

\subsection{Step 1: points $X(k)$ on $\Sigma$ and adapted frame} \label{step1}

The first part of step 1 is to find the $m$ intersection points $X(k)$, indexed by $k = 0, \dots, m-1$, of the orbit $\gamma$ with the section $\Sigma$. This can be done by numerically integrating $\bold{x}_{0}$ by its known period $T$ and using the event detection tools described in Section \ref{sectionSection} to calculate each crossing of $\gamma(t)$ with $\Sigma$ over $t \in [0, T)$; the first $\Sigma$ crossing of $\gamma(t)$ should be saved as $X(0)$, the second crossing as $X(1)$, and so on. Furthermore, as part of this step, one should detect the times $t_{k} \in [0, T)$ from this integration when $\gamma(t_{k}) = X(k)$. Then, define $\tau(k) = t_{k+1} - t_{k}$ for $k \in \{0, \dots, m-2\}$ and $\tau(m-1) = (t_{0}+T) -t_{m-1}$. Each $\tau(k)$ so defined is the Poincar\'e map first return time of the point $X(k)$: the time it takes for $X(k) \in \Sigma$ to return to $\Sigma$ when propagated by the flow. Since we know that the orbit $\gamma(t)$ goes through the $X(k)$ in order, we have that 
\begin{equation} \label{poInvariance} \Phi_{\tau(k)}(X(k)) = P(X(k)) = X(k+1 \mod m) \end{equation}
for all $k = 0, \dots, m-1$. Since all $X(k)$ lie on the section $\Sigma$ and $\tau(k)$ are their first return times, $P(X(k)) = \Phi_{\tau(k)}(X(k))$ here. However, note that in general, the Poincar\'e map $P:\Sigma \rightarrow \Sigma$ is \emph{not} the same as the map $\Phi_{\tau(k)}: \mathbb{R}^{4} \rightarrow \mathbb{R}^{4}$ for any $k$, as the latter is a fixed-time flow map on all of $\mathbb{R}^{4}$. Also note that Equation \eqref{poInvariance} is just the usual multiple shooting equation for periodic orbits of a map $P$. 

Now, given the points $X(k) \in \Sigma$, $k = 0, \dots, m-1$ of $\gamma$ lying on $\Sigma$ and satisfying Equation \ref{poInvariance}, one needs to compute some properties of the linearized flow. First of all, compute monodromy matrices $D_{\bold{x}}\Phi_T(X(k))$ of $\gamma$ at each point $X(k)$. Finding their eigenvalues and eigenvectors in turn will yield the stable \& unstable Floquet multipliers of the orbit $\tilde \lambda_{s}$ and $\tilde \lambda_{u}$, respectively, as well as stable \& unstable unit-length eigenvectors $\bold{v}_{s}(k)$ and $\bold{v}_{u}(k)$ at each point $X(k)$. These eigenvectors are tangent to the stable/unstable manifolds at each point, and thus linearly approximate the local manifolds. 

However, there is ambiguity in the orientation of each $\bold{v}_{s}$ and $\bold{v}_{u}$ here; if $\bold{v}_{s}$ is a unit-length stable eigenvector then so is $-\bold{v}_{s}$ (and similarly with $\bold{v}_{u}$). To resolve this, first recall the standard result that monodromy matrices at different points $X(k)$ of the orbit are similar, as $D_{\bold{x}}\Phi_{T}(X(k+1 \mod m)) = D_{\bold{x}}\Phi_{\tau(k)}(X(k)) D_{\bold{x}}\Phi_{T}(X(k)) D_{\bold{x}}\Phi_{\tau(k)}(X(k))^{-1}$; this in turn implies that if $\bold{v}$ is an eigenvector of $D_{\bold{x}}\Phi_{T}(X(k)) $ with eigenvalue $\tilde \lambda$, then $D_{\bold{x}}\Phi_{\tau(k)}(X(k)) \bold{v}$ is an eigenvector of $D_{\bold{x}}\Phi_{T}(X(k + 1 \mod m))$ for $\tilde \lambda$ as well. Since $\tilde \lambda_{s}$ and $\tilde \lambda_{u}$ each have 1D eigenspaces spanned by $\bold{v}_{s}(k)$ and $\bold{v}_{u}(k)$ for each monodromy matrix $D_{\bold{x}}\Phi_T(X(k))$, this implies that there exist $\lambda_{s}(k), \lambda_{u}(k)$ such that $D_{\bold{x}}\Phi_{\tau(k)}(X(k)) \bold{v}_{s}(k) = \lambda_{s}(k) \bold{v}_{s}(k + 1 \mod m)$ (similar for $\lambda_{u}(k)$ and $\bold{v}_{u}(k)$). Now, to resolve the ambiguity in orienting $\bold{v}_{s}$ and $\bold{v}_{u}$, first arbitrarily choose directions for $\bold{v}_{s}(0)$ and $\bold{v}_{u}(0)$, and then recursively choose orientations for the unit eigenvectors $\bold{v}_{s}(i)$ and $\bold{v}_{u}(i)$, $i = 1, \dots, m-1$ such that for $k = 0, \dots, m-2$, 
\begin{equation} \label{condition}
\begin{split}
\lambda_{s}(k) = {[\bold{v}_{s}(k+1 \mod m)]^{T} D_{\bold{x}}\Phi_{\tau(k)}(X(k)) \bold{v}_{s}(k)} > 0 \\ 
\lambda_{u}(k) = {[\bold{v}_{u}(k+1 \mod m)]^{T} D_{\bold{x}}\Phi_{\tau(k)}(X(k)) \bold{v}_{u}(k)} > 0
\end{split}
\end{equation}

In later steps (e.g. Equations \eqref{cohom_as}-\eqref{cohom_au}), we require Equation \eqref{condition} and $\lambda_{u}(k), \lambda_{s}(k) > 0$ to be satisfied not only for $k = 0, \dots, m-2$, but also for $k = m-1$. However, if the periodic orbit being considered has monodromy matrix stable/unstable eigenvalues $\tilde \lambda_{s}$, $\tilde \lambda_{u}$ negative, then not all of the $\lambda_{u}(k), \lambda_{s}(k)$ can be positive; to handle such cases, one can consider double covers of such periodic orbits. That is, rather than considering periodic orbit $\gamma(t)$ with period $T$, study the orbit $\gamma_{d}(t) = \gamma(t \mod T)$ over time $t \in [0, 2T]$, which should be considered as a $2T$ periodic orbit. The monodromy matrices of $\gamma_d$ will have positive eigenvalues, as they will be squares of those of $\gamma$, while the stable/unstable manifolds of $\gamma_d$ will match those of $\gamma$. So, in such cases, one should study the double cover orbit rather than the original.

With unit eigenvectors $\bold{v}_{s}(k)$ and $\bold{v}_{u}(k)$ now set satisfying Equation \eqref{condition} for $k = 0, \dots, m-1$, one should now rescale them to make the previously-defined multipliers $\lambda_{s}(k)$, $\lambda_{u}(k)$ constant. This will make the manifold computations in Section \ref{paramsection} significantly easier. To find the desired rescaling, solve for $a_{s}(k)$, $a_{u}(k)$ for $k = 0, \dots, m-1$ and positive constants $\bar \lambda_{s}$, $\bar \lambda_{u}$ satisfying the equations
\begin{equation} \label{cohom_as} \log(a_{s}(k)) - \log(a_{s}(k+1 \mod m)) = -[\log (\lambda_{s}(k)) - \log \bar \lambda_{s}] \end{equation}
\begin{equation} \label{cohom_au} \log(a_{u}(k)) - \log(a_{u}(k+1 \mod m)) = -[\log (\lambda_{u}(k)) - \log \bar \lambda_{u}] \end{equation}
Note that the sum over $k = 0, \dots, m-1$ of the LHS of each equation is zero, so that the same should be true of the RHS. This yields that $\bar \lambda_{s} = \exp\left(\frac{1}{m}\sum_{k=0}^{m-1} \log \lambda_{s}(k) \right) $ and $\bar \lambda_{u} = \exp\left(\frac{1}{m}\sum_{k=0}^{m-1} \log \lambda_{u}(k) \right) $. The solution of $a_s(k)$ and $a_u(k)$ follows a procedure which will be needed many times in this paper, and is described at the end of this Step 1 subsection, in Section \ref{cohomSolving}. For now, assume that $a_{s}(k)$, $a_{u}(k)$, $\bar \lambda_{s}$, and $\bar \lambda_{u}$ have been found as described earlier. Then, defining $\bold{\bar v}_{s}(k) = a_{s}(k) \bold{v}_{s}(k)$, we will have that $D_{\bold{x}}\Phi_{\tau(k)}(X(k))  \bold{\bar v}_{s}(k) =  \bar \lambda_{s} \bold{\bar v}_{s}(k + 1 \mod m)$, and similarly for $\bold{\bar v}_{u}(k) = a_{u}(k) \bold{\bar v}_{u}(k)$; this is proven in \ref{rescaleProof}. 

The vectors $\bold{\bar v}_{s}(k)$, $\bold{\bar v}_{u}(k)$ for $k=0, \dots, m-1$ found above are two of the four vectors needed at each point $X(k)$ to form an \emph{adapted frame} in which later computations will be much simpler. For the other two vectors of this frame, denoted $\bold{\bar v}_{1}(k)$ and $\bold{\bar v}_{2}(k)$, the $\bold{\bar v}_{1}(k)$ should simply be taken as the flow vector $ [\frac{\partial H}{\partial p_{x}}, \, \frac{\partial H}{\partial p_{y}}, \, -\frac{\partial H}{\partial x}, \, -\frac{\partial H}{\partial y} ]$ at each $X(k)$. Given Equation \eqref{poInvariance} and well known properties of flows, this implies that $D_{\bold{x}}\Phi_{\tau(k)}(X(k))  \bold{\bar v}_{1}(k) =  \bold{\bar v}_{1}(k + 1 \mod m)$. To find $\bold{\bar v}_{2}(k)$, the following steps are required, as adapted from a similar procedure developed in \cite{kumar2022} for invariant tori:

\begin{enumerate} 
\item Find $T(k), B(k), C(k), D(k) \in \mathbb{R}$ for $k = 0, \dots, m-1$ satisfying
\begin{equation}  \label{abcd}
\begin{aligned} D\Phi_{\tau(k)} \frac{J^{-1} \bold{\bar v}_{1}(k)}{ \|\bold{\bar v}_{1}(k)\|^{2}} = T(k) &\bold{\bar v}_{1}(k+1 \mod m) + B(k) \frac{J^{-1} \bold{\bar v}_{1}(k+1 \mod m)}{ \|\bold{\bar v}_{1}(k+1 \mod m)\|^{2}} \\ &+ C(k) \bold{\bar v}_{s}(k+1 \mod m) + D(k) \bold{\bar v}_{u}(k+1 \mod m) \end{aligned} \end{equation}
For each $k$ this is just a $4 \times 4$ linear system of equations that can be solved for $T(k), B(k), C(k)$, and $D(k)$. One should find that $B(k) = 1$ for all $k$, as is proven in \ref{sympConjProof}. 

\item Using the $C(k)$, $D(k)$ found earlier, solve for $f_{1}(k)$ and $f_{2}(k)$ for $k = 0, \dots, m-1$ satisfying 
\begin{equation} \label{cEquation} C(k) =  f_{1}(k+1 \mod m) - \bar \lambda_{s}f_{1}(k) \end{equation}
\begin{equation}  \label{dEquation}  D(k) =  f_{2}(k+1 \mod m) - \bar \lambda_{u}f_{2}(k)  \end{equation}
A method of efficiently solving such equations is given in Section \ref{cohomHypSolving}.

\item Using the $f_{1}(k)$ and $f_{2}(k)$ found just earlier, compute  $\bold{v}_{2}(k)$ as 
\begin{equation} \label{prelimSympConj} \bold{ v}_{2}(k) = \frac{J^{-1} \bold{\bar v}_1(k)}{ \|\bold{\bar v}_1(k)\|^{2}} + f_{1}(k) \bold{\bar v}_{s}(k)  + f_{2}(k) \bold{\bar v}_{u}(k)  \end{equation}

\item Define $\bar T = \frac{1}{m}\sum_{k=0}^{m-1} T(k) $ and solve (see Section \ref{cohomSolving}) for $a(k)$, $k = 0, \dots, m-1$ satisfying
\begin{equation}  \label{Tkill}  a(k) - a(k+1 \mod m) = -\left(T(k)-\bar T\right) \end{equation}

\item Set each $\bold{\bar v}_{2}(k) = \bold{v}_{2}(k) + a(k) \bold{\bar v}_{1}(k) $

\end{enumerate} 
As is proven in \ref{sympConjProof}, the vectors $\bold{\bar v}_{2}(k)$ found using this procedure satisfy the equation 
\begin{equation}  \label{sympConjEquation} D_{\bold{x}}\Phi_{\tau(k)}(X(k))  \bold{\bar v}_{2}(k) =  \bar T \bold{\bar v}_{1}(k + 1 \mod m)  + \bold{\bar v}_{2}(k + 1 \mod m)   \end{equation}

Now, define matrices $M(k) \in \mathbb{R}^{4 \times 4}$ with column 1 given by $\bold{v}_{1}(k)$, column 2 by $\bold{\bar v}_{2}(k)$, column 3 by $\bold{\bar v}_{s}(k)$, and column 4 by $\bold{\bar v}_{u}(k)$ for $k=0, 1, \dots, m-1$. Then, recalling Equation \eqref{sympConjEquation} along with the previous results on products of $D_{\bold{x}}\Phi_{\tau(k)}(X(k))$ with $ \bold{\bar v}_{1}(k)$, $  \bold{\bar v}_{s}(k)$, and $ \bold{\bar v}_{u}(k)$ yields that for all $k=0, 1, \dots, m-1$, 
\begin{equation}  \label{frameEquation} D_{\bold{x}}\Phi_{\tau(k)}(X(k))  M(k) =  M(k + 1 \mod m)  \Lambda \quad \quad \text{where } \Lambda = \begin{bmatrix}
1 &  \bar T   & 0 & 0 \\ 0 &  1   & 0 & 0 \\ 0 & 0  & \bar \lambda_s & 0 \\ 0 &  0 & 0 & \bar \lambda_u \end{bmatrix}  \end{equation}
The matrices $M(k)$ are referred to as an \emph{adapted frame}, and the constant matrix $\Lambda$ is a Floquet matrix whose near-diagonal form will simplify the manifold computations to follow. 

\subsubsection{Solving ``cohomological equations'' such as Equations \eqref{cohom_as}-\eqref{cohom_au}, Equation \eqref{Tkill}} \label{cohomSolving}
Equations \eqref{cohom_as}-\eqref{cohom_au} and \eqref{Tkill} are all of a general form which will be seen many times in this paper:
\begin{equation} \label{cohom} u(k) - u(k+1 \mod m) = b(k) \end{equation}
with $b(k)$ some known function of $k = 0, \dots, m-1$ satisfying $\sum_{k=0}^{m-1} b(k) =0$. Note that if $u(k)$ is a solution of Equation \eqref{cohom}, then so is $u(k)+C$ for any $C \in \mathbb{R}$. Thus, to solve for $u(k)$, one can set $u(0) = 0$ arbitrarily, and then recursively find $u(1), \dots, u(m-1)$ using Equation \eqref{cohom} for $k = 0, \dots, m-2$. Equation \eqref{cohom} will then automatically also be satisfied for $k=m-1$ due to the condition $\sum_{k=0}^{m-1} b(k) =0$. Note that Equation \eqref{cohom} is analogous to the cohomological equations $u(\theta) - u(\theta+\omega) = b(\theta)$ involved in computing invariant tori and their adapted frames, see e.g. \cite{kumar2022}. 

\subsubsection{Fixed-point iteration for solving equations such as Equations \eqref{cEquation}-\eqref{dEquation}} \label{cohomHypSolving}
Equations \eqref{cEquation}-\eqref{dEquation} are of a general form which will also be seen many times in this paper:
\begin{equation} \label{cohomHyp} \alpha u(k) -  u(k+1 \mod m) = b(k) \end{equation}
with $1 \neq \alpha >0$ and $b(k)$ some known function of $k = 0, \dots, m-1$.
To find $u(k)$, rewrite Equation \eqref{cohomHyp} as
\begin{equation} \label{u1contract} u(k) = \alpha u(k-1 \mod m) -  b(k-1 \mod m)  \stackrel{\text{def}}{=} [A(u)](k)\end{equation} 
\begin{equation} \label{u2contract} u(k) = \alpha^{-1} \left[ b(k) +   u(k+1 \mod m) \right]  \stackrel{\text{def}}{=}  [B(u)](k) \end{equation}  
$A$ and $B$ are maps which send any finite sequence $u(k)$, $k = 0, \dots, m-1$ to the new finite sequences $A(u)$ and $B(u)$ with $k$th terms given by the middle expressions of Eq. \eqref{u1contract}-\eqref{u2contract} for $k = 0, \dots, m-1$. 
It is then easy to show (see \ref{contractionProof}) that if $\alpha < 1$, $A$ is a contraction under the $\ell^\infty$ norm, and similarly for $B$  if $\alpha > 1$. Thus, to find $u$, let $u_0(k) = 0$ for all $k = 0, \dots, m-1$, and repeatedly iterate $u_{n+1} = A(u_{n})$ (if $\alpha < 1$) or $u_{n+1} = B(u_{n})$ (if $\alpha > 1$), starting at $n=0$. By the contraction mapping theorem  \cite{chicone2006}, the iteration will converge to the desired solution sequence $u$ of Eq. \eqref{u1contract} or  \eqref{u2contract}, and thus also of Eq. \eqref{cohomHyp}. 

\subsection{Step 2: functions $W(k,s)$ parameterizing curves on $W^s(\gamma), W^u(\gamma)$ near $\Sigma$} \label{paramsection}

With intersection points $X(k) \in \Sigma$, adapted frame matrices $M(k) \in \mathbb{R}^{4 \times 4}$, and Floquet matrix $\Lambda$ computed for periodic orbit $\gamma$, we now start the computation of its stable/unstable manifolds $W^s(\gamma), W^u(\gamma)$. Recall from Section \ref{overviewSection} that the goal is to accurately compute functions $W_{p}^{u}, W_{p}^{s}: \{0, 1, \dots, m-1\} \times \mathbb{R} \rightarrow \Sigma$ with $W_{p}^{u}(k,0)=W_{p}^{s}(k,0)=X(k)$ parameterizing $W^{s}_\Sigma$, $W^{u}_\Sigma$, the stable/unstable manifolds' 1D intersection curves with the Poincar\'e section $\Sigma$. The equation that $W_{p}^{u}$ and $W_{p}^{s}$ should satisfy is
\begin{equation} \label{wpInvariance} P(W_{p}(k,s)) = W_{p}(k+1 \mod m, \lambda s) \end{equation}
where $\lambda = \bar \lambda_s$ or $\bar \lambda_u$ depending on which manifold (stable $W_{p}^{s}$ or unstable $W_{p}^{u}$) is being computed. We also require that $W_{p}(k,0)=X(k)$, which for $s=0$ makes Equation \eqref{wpInvariance} equivalent to Equation \eqref{poInvariance}. Solving Equation \eqref{wpInvariance} directly as in \cite{gonzalezJames} however requires composing $P$ with Taylor series, which we wish to avoid. 

To compute the manifolds without solving Equation \eqref{wpInvariance} directly, we will instead first solve the equation
\begin{equation} \label{invariancequationfinal} \Phi_{\tau(k)}(W(k,s)) = W(k+1 \mod m, \lambda s) \end{equation}
which involves involves fixed-time mappings $\Phi_{\tau(k)}$, rather than the Poincar\'e map $P$. Here, $W: \{0, 1, \dots, m-1\} \times \mathbb{R} \rightarrow \mathbb{R}^{4}$ still will be a function parameterizing $m$ 1D curves lying on $W^s(\gamma)$ or $W^u(\gamma)$, with $W(k,0)=X(k)$. However, these curves will no longer necessarily lie \emph{in} $\Sigma$, instead lying \emph{near} $\Sigma$ for $s$ near 0. Equation \eqref{invariancequationfinal} simplifies computing $W$ by allowing composition of fixed-time $\Phi_{\tau(k)}$ instead of $P$ with Taylor series. 

Interpreting Equation \eqref{invariancequationfinal} in the parameterization method framework of Section \ref{paramethodsection}, the model manifold here is $\mathcal K = \{0, 1, \dots, m-1\} \times \mathbb{R}$, and $f(k, s) = (k+1 \mod m, \lambda s)$, where $\lambda$ is $\bar \lambda_s$ or $\bar \lambda_u$ depending on whether the stable or unstable manifold is being computed. In line with other parameterization methods for fixed points \cite{kumar2021journal} and for invariant tori \cite{kumar2022}, we will seek a solution in $m$ Taylor series
\begin{equation}  \label{series} W(k,s) = X(k) + \sum_{d \geq 1} W_{d}(k) s^d  \end{equation}
where $W_{d}(k) \in \mathbb{R}^{4}$ for all $k=0, 1, \dots, m-1$. As desired, $W(k,0)=X(k)$, so $s=0$ corresponds to the periodic orbit whose manifold we are trying to compute. The $s^{0}$ term of $W$ is $X(k)$, and the linear term $W_{1}(k)$ will be the stable $\bold{\bar v}_{s}(k)$ or unstable $\bold{\bar v}_{u}(k)$ defined in Section \ref{step1} and contained in the third or fourth column of $M(k)$. Hence we need to solve for the higher-order coefficients $W_{d}(k) \in \mathbb{R}^{4}$, $d \geq 2$. 

Denote $W_{<d} (k, s) = X(k) + \sum_{j = 1}^{d-1} W_{j}(k)s^{j}  $. Assume we have solved for all $W_{j}(k)$ for $j <d$, so that the Taylor expansions of $\Phi_{\tau(k)}(W_{<d}(k, s)) - W_{<d}( k+1 \mod m, \lambda s)$ have only $s^{d}$ and higher order terms for all $k=0, 1, \dots, m-1$. Then, starting with $d=2$, the recursive method to solve for the $W_{d}(k)$ is:
    \begin{enumerate}
        	\item Find $E_{d}(k)= [\Phi_{\tau(k)}(W_{<d}(k, s)) - W_{<d}(k+1 \mod m,  \lambda s)]_{d}$, where $[\cdot]_{d}$ denotes the $s^{d}$ Taylor coefficient of the term inside brackets. We describe methods for this in Section \ref{jettransport}. 
	\item Find $W_{d} (k)$, $k=0, 1, \dots, m-1$ such that $W_{<d} (k,s)+W_{d} (k)s^{d}$ cancels the error $E_d (k)s^{d}$ in Eq. \eqref{invariancequationfinal}, thus satisfying Eq. \eqref{invariancequationfinal} up to order $s^{d}$. The equation to solve for $W_{d}(k)$ is 
	\begin{equation}  \label{correctionwk} D_{\bold{x}}\Phi_{\tau(k)}(X(k)) W_{d}(k) - \lambda^{d} W_{d}(k+1 \mod m) = -E_{d}(k) \end{equation}
	Making the substitution $W_{d}(k) = M(k) V_d(k)$, defining $\eta_d(k) = -M(k+1 \mod m)^{-1}E_{d}(k) $, and recalling Eq. \eqref{frameEquation}, Equation \eqref{correctionwk} is equivalent to $\Lambda V_{d}(k) - \lambda^{d} V_{d}(k+1 \mod m) = \eta(k)$, or equivalently:
	\begin{equation}  \label{v1}  V_{d,1}(k) - \lambda^{d} V_{d,1}(k+1 \mod m) = \eta_{d,1}(k) - \bar T  V_{d,2}(k) \end{equation} 
		\begin{equation}  \label{v2}  V_{d,2}(k) - \lambda^{d} V_{d,2}(k+1 \mod m) = \eta_{d,2}(k)  \end{equation} 
	\begin{equation}  \label{v3} \bar \lambda_s V_{d,3}(k) - \lambda^{d} V_{d,3}(k+1 \mod m) = \eta_{d,3}(k)   \end{equation} 
	\begin{equation}  \label{v4} \bar \lambda_u V_{d,4}(k) - \lambda^{d} V_{d,4}(k+1 \mod m) = \eta_{d,4}(k)   \end{equation} 
where $V_d(k) = [V_{d,1}(k), V_{d,2}(k), V_{d,3}(k), V_{d,4}(k)]^T , \eta_d(k) = [\eta_{d,1}(k), \eta_{d,2}(k), \eta_{d,3}(k), \eta_{d,4}(k)]^T \in \mathbb{R}^4$. After multiplying through by $\lambda^{-d}$, Equations \eqref{v1}-\eqref{v4} can be solved by the method of Section \ref{cohomHypSolving}, since $\bar \lambda_u \lambda^{-d} \neq 1$, $\bar \lambda_s \lambda^{-d} \neq 1$ always for $d \geq 2$. This gives $V_d(k)$ and thus $W_{d}(k) = M(k) V_d(k)$. 

	\item Set $W_{<d+1} (k,s) = W_{<d} (k, s) + W_d(k) s^{d}$ and return to step 1.
    \end{enumerate}
    The recursion is stopped when we are satisfied with the degree $d$ of $W$. Note that the adapted frame  allowed us to nearly decouple the equations in Step 2 (except for a single back-substitution), simplifying the solution of Equation \eqref{correctionwk}. We now prove that Equation \eqref{correctionwk} indeed yields $W_{d}(k)$ cancelling the order $s^d$ error. 

\begin{claim} If $W_{d}$ solves Eq. \eqref{correctionwk}, then for $j \leq d$ (using the $[\cdot]_{d}$ notation defined earlier),
\begin{equation}  \label{jcoeff} \left[  \Phi_{\tau(k)}(W_{<d}(k, s)+W_{d}(k)s^{k}) - \left(W_{<d}(k+1 \mod m, \lambda s)+W_{k}(k+1 \mod m)( \lambda s)^{d}\right) \right]_{j}= 0 \end{equation} 
\end{claim}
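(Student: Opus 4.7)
The plan is to prove the claim by splitting into the cases $j < d$ and $j = d$ and carefully tracking the contribution of the added correction term $W_d(k) s^d$ at each order.

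First, I would note the inductive setup: by assumption entering this step, all $W_j(k)$ for $j < d$ have been chosen so that
$$\Phi_{\tau(k)}(W_{<d}(k,s)) - W_{<d}(k+1 \bmod m, \lambda s)$$
has Taylor expansion in $s$ beginning at order $d$. Equivalently, its $s^j$ coefficient vanishes for $j < d$, and its $s^d$ coefficient is exactly $E_d(k)$ by the definition given in step 1 of the recursion.

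For the case $j < d$, the key observation is that adding $W_d(k) s^d$ inside $\Phi_{\tau(k)}$ contributes only terms of order $\geq d$ to the Taylor expansion of $\Phi_{\tau(k)}(W_{<d}(k,s) + W_d(k) s^d)$, since each appearance of $W_d(k) s^d$ in a Taylor expansion of $\Phi_{\tau(k)}$ about $X(k)$ carries at least one factor of $s^d$. Similarly, the subtracted term $W_d(k+1 \bmod m)(\lambda s)^d$ is purely of order $d$. Hence for $j < d$, the $s^j$ coefficient is unchanged from that of $\Phi_{\tau(k)}(W_{<d}(k,s)) - W_{<d}(k+1 \bmod m, \lambda s)$, which is zero by the inductive hypothesis.

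For the case $j = d$, I would compute the $s^d$ coefficient explicitly. By a first-order Taylor expansion of $\Phi_{\tau(k)}$ about $W_{<d}(k,s)$, and isolating the order-$d$ contribution (using that $W_{<d}(k,0) = X(k)$ and that higher-order terms in the expansion contribute only at orders $\geq 2d > d$),
$$[\Phi_{\tau(k)}(W_{<d}(k,s) + W_d(k) s^d)]_d = [\Phi_{\tau(k)}(W_{<d}(k,s))]_d + D_{\bold x}\Phi_{\tau(k)}(X(k)) W_d(k).$$
The corresponding $s^d$ coefficient of the right-hand side is
$$[W_{<d}(k+1 \bmod m, \lambda s)]_d + \lambda^d W_d(k+1 \bmod m).$$
Subtracting and recalling that $[\Phi_{\tau(k)}(W_{<d}(k,s)) - W_{<d}(k+1 \bmod m, \lambda s)]_d = E_d(k)$, the full order-$d$ coefficient equals
$$E_d(k) + D_{\bold x}\Phi_{\tau(k)}(X(k)) W_d(k) - \lambda^d W_d(k+1 \bmod m),$$
which vanishes precisely because $W_d(k)$ was chosen to solve Equation \eqref{correctionwk}. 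This closes the induction and establishes the claim.

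The only mildly delicate step is justifying that the first-order Taylor expansion of $\Phi_{\tau(k)}$ captures all $s^d$ contributions from the perturbation $W_d(k)s^d$; once one observes that any second- or higher-order term in that expansion picks up an additional $s^d$ factor, the calculation is routine. Everything else reduces to bookkeeping of Taylor coefficients, and the adapted frame plays no role in the proof itself (it enters only when actually solving Equation \eqref{correctionwk}).
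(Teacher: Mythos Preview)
Your proof is correct and follows essentially the same approach as the paper's: expand $\Phi_{\tau(k)}$ to first order about $W_{<d}(k,s)$, use the inductive hypothesis to handle $j<d$, and for $j=d$ evaluate the Jacobian at $s=0$ (i.e., at $X(k)$) to reduce the order-$d$ coefficient to $E_d(k) + D_{\mathbf{x}}\Phi_{\tau(k)}(X(k))W_d(k) - \lambda^d W_d(k+1 \bmod m)$, which vanishes by Eq.~\eqref{correctionwk}. Your explicit remark that higher-order Taylor terms contribute only at orders $\geq 2d$ is a welcome clarification of a step the paper leaves implicit.
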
 
\begin{proof} Recall that $ \Phi_{\tau(k)}(W_{<d}(k, s)) - W_{<d}(k+1 \mod m,  \lambda s)=E_{d}(k)s^{d} + \mathcal O(s^{d+1}) $ by assumption. Expanding Eq. \eqref{jcoeff} in Taylor series and keeping only $s^{d}$ and lower order terms gives
\begin{align}  \begin{split}
\Big[\Phi_{\tau(k)}(W_{<d}&(k, s)) + \Big. D_{\bold{x}}\Phi_{\tau(k)}(W_{<d}(k, s)) W_{d}(k)s^{d} - \\
& \quad \quad \quad \left. \left(W_{<d}(k+1 \mod m, \lambda s)+W_{d}(k+1 \mod m)( \lambda s)^{d}\right) \right]_{j} \\
=&[E_{d}(k)s^{k} + D_{\bold{x}}\Phi_{\tau(k)}(W_{<d}(k,s)) W_{d}(k)s^{d} - \lambda^{d} W_{d}(k+1 \mod m)s^{d} ]_{j} \\
=&\begin{cases} 
      0 &\text{if $j<d$}, \\
      E_{d}(k) + D_{\bold{x}}\Phi_{\tau(k)}(X(k)) W_{d}(k) - \lambda^{d} W_{d}(k+1 \mod m) = 0 &\text{if $j=d$}
   \end{cases} \\
\end{split} \end{align}
where the $j=d$ case of the last line follows from the preceding line by dividing $s^{d}$ out from the quantity inside $[.]_{j}$, and then taking $s \rightarrow 0$. 
\end{proof}

\subsubsection{Computing $E_{d}(k)$: automatic differentiation and jet transport} \label{jettransport}

In step 1 of the order-by-order parameterization method to find $W$, we computed the $s^{d}$ coefficients
\begin{equation} \label{Ekdef} E_{d}(k)= [\Phi_{\tau(k)}(W_{<d}(k, s)) - W_{<d}(k+1 \mod m,  \lambda s)]_{d}\end{equation}
Each $W_{<d}(k, s)$ is a degree $d-1$ polynomial and $\lambda$ is a constant, so the $s^{d}$ term of $W_{<d}(k+1 \mod m,  \lambda s)$ is just 0. 
However, computing the Taylor expansion of $\Phi_{\tau(k)}(W_{<d}(k, s)) $ is more complicated, as each $\Phi_{\tau(k)}$ is a nonlinear map defined by integrating points for a fixed time $\tau(k)$ by the Hamiltonian equations of motion \eqref{H_EOM}. For this, we use the tools of automatic differentiation \cite{haroetal} and jet transport \cite{perezpalau2015}, which are also sometimes referred to as differential algebra in the literature \citep{dast,Berz1998}. The following discussion of these two methods is largely identically reproduced from the author's previous paper \cite{kumar2022}. 

Automatic differentiation is an efficient and recursive technique for evaluating operations on Taylor series. For instance, let $f(s)$ and $g(s)$, $s \in \mathbb{R}$, be two series; we can use their known coefficients to compute $d(s) = f(s) / g(s)$ as a Taylor series as well. Let subscript $j$ denote the $s^{j}$ coefficient of a series; since $ f(s) = d(s) g(s)$, we find that $f_{i} = \sum_{j=0}^{i} d_{j} g_{i-j} =  \left( \sum_{j=0}^{i-1} d_{j} g_{i-j}(s) \right)+ d_{i} g_{0} $, which implies that
\begin{equation}  
  \label{autodiff}  d_{i} = \frac{1}{g_{0}} \left( f_{i} - \sum_{j=0}^{i-1} d_{j} g_{i-j} \right)\end{equation} 
Starting with $d_{0} = f_{0}/ g_{0}$, Eq. \eqref{autodiff} allows us to recursively compute $d_{i}$, $i \geq 1$. Similar formulas also exist for recursively evaluating many other functions and operations on Taylor series, including $f(s)^{\alpha}$, $\alpha \in \mathbb{R}$; see \cite{haroetal} for more examples. Most importantly, in all automatic differentiation formulas, the output series $s^{i}$ coefficient depends only on the $s^{i}$ and lower order coefficients of the input series. Hence, truncation of Taylor series for the purpose of computer implementation does not affect the accuracy of the computed coefficients.

The utility of automatic differentiation is that we can substitute Taylor series such as $W_{<d}(k, s)$ for $(x, y, p_x, p_y)$ in the equations of motion \eqref{H_EOM}, which gives us series in $s$ for $(\dot x, \dot y,\dot p_x, \dot p_y)$. Thus, after overloading the required operators (e.g. arithmetic and power) to accept Taylor series arguments, we can use numerical integration routines with the series as well.\footnote{In fact, any method of overloading the basic operations to accept polynomial arguments could be used in combination with numerical integration here, including methods other than automatic differentiation. As the  implementation of this paper used automatic differentiation, this will be the focus of discussion here.} 
More precisely, consider a Taylor series-valued function of time $V(s,t) = \sum_{j=0}^{\infty}V_{j}(t)s^{j}:\mathbb{R}^{2} \rightarrow \mathbb{R}^{4}$, where $V_{j}(t)$ are its time-varying coefficients. Write $V_{x}(s,t)$, $V_{y}(s,t)$, ${V}_{p_{x}}(s,t)$, and $V_{p_y}(s,t)$ for the $x$, $y$, $p_x$, and $p_y$ components of $V(s,t)$; similarly write $V_{j,x}(t)$, $V_{j,y}(t)$, ${V}_{j,p_{x}}(t)$, and $V_{j,p_y}(t)$ for the components of $V_{j}(t)$. Substituting $V$ in Eq. \eqref{H_EOM} yields a system of differential equations 
\begin{equation} \label{vxdot}  \frac{d}{dt}{V_{x}(s,t)} = \sum_{j=0}^{\infty}\dot V_{j,x}(t)s^{j} = \frac{\partial H}{\partial p_{x}}\Big(V_{x}(s,t),V_{y}(s,t), V_{p_x}(s,t),V_{p_y}(s,t)\Big)  \end{equation}
\begin{equation}  \label{vydot} \frac{d}{dt}{V_{y}(s,t)} = \sum_{j=0}^{\infty}\dot V_{j,y}(t)s^{j} = \frac{\partial H}{\partial p_{y}}\Big(V_{x}(s,t),V_{y}(s,t), V_{p_x}(s,t),V_{p_y}(s,t)\Big)  \end{equation}
\begin{equation}  \label{vpxdot} \frac{d}{dt}{V_{p_x}(s,t)} = \sum_{j=0}^{\infty}\dot V_{j,p_x}(t)s^{j} = -\frac{\partial H}{\partial x}\Big(V_{x}(s,t),V_{y}(s,t), V_{p_x}(s,t),V_{p_y}(s,t)\Big)  \end{equation}
\begin{equation} \label{vpydot} \frac{d}{dt}{V_{p_y}(s,t)} = \sum_{j=0}^{\infty}\dot V_{j,p_y}(t)s^{j} = -\frac{\partial H}{\partial y}\Big(V_{x}(s,t),V_{y}(s,t), V_{p_x}(s,t),V_{p_y}(s,t)\Big)  \end{equation}
Assume that $H$ and its partial derivatives are algebraic functions that are suitable for use with automatic differentiation techniques, e.g. the PCRTBP Hamiltonian Eq. \eqref{pcrtbpH}. Hence, if the $V_{j,x}(t)$, $V_{j,y}(t)$, ${V}_{j,p_{x}}(t)$, and $V_{j,p_y}(t)$ are known for $j \in \mathbb{N}$ and some $t \in \mathbb{R}$, automatic differentiation allows us to simplify the RHS of each of Eq. \eqref{vxdot}-\eqref{vpydot} to a series in $s$. Then, for each of Eq. \eqref{vxdot}-\eqref{vpydot} and $j \in \mathbb{N}$, the $s^{j}$ coefficient $\dot V_{j,x}(t)$, $\dot V_{j,y}(t)$, ${\dot V}_{j,p_{x}}(t)$, or $\dot V_{j,p_y}(t)$ from the LHS must be equal to the $s^{j}$ coefficient of the RHS. In other words, $\dot V_{j,x}(t)$, $\dot V_{j,y}(t)$, ${\dot V}_{j,p_{x}}(t)$, and $\dot V_{j,p_y}(t)$, $j \in \mathbb{N}$, are functions of $ V_{j,x}(t)$, $ V_{j,y}(t)$, ${ V}_{j,p_{x}}(t)$, and $ V_{j,p_y}(t)$, $j \in \mathbb{N}$. This is effectively a system of differential equations for the time-varying Taylor coefficients of $V(s,t)$. Solving Eq. \eqref{vxdot}-\eqref{vpydot}  for the various initial conditions $V(s,0) = W_{<d}(k, s)$, $k=0, 1, \dots, m-1$, we can compute $V(s,\tau(k))=\Phi_{\tau(k)}(W_{<d}(k, s))$ for all desired $k$, which are precisely the Taylor series we needed. 

In summary, we consider the Taylor coefficients of $W_{<d}(k, s)$ as initial state variables to be numerically integrated coefficient by coefficient; propagating by time $\tau(k)$, we get the Taylor coefficients of $\Phi_{\tau(k)}(W_{<d}(k, s))$; the $s^{d}$ coefficient of this gives us $E_{d}(k)$. This approach for numerical integration of Taylor series is called jet transport; see \cite{perezpalau2015} for more details. Truncated Taylor series can be used with jet transport, since the automatic differentiation techniques used to evaluate time derivatives work with truncated series. Note that for degree-$d$ truncated series and our $4$-dimensional phase space, there are $4(d + 1)$ coefficients, which is the required dimension for the numerical integration. 

\subsubsection{Notes about implementation and computation of series $W(k,s)$} 


The parameterization method, automatic differentiation, and jet transport described earlier were implemented for the PCRTBP in a Julia program, leveraging the TaylorSeries.jl \cite{Benet2019}, TaylorIntegration.jl \cite{perezTaylorIntegration}, and OrdinaryDiffEq.jl \cite{RackauckasQing} packages for automatic differentiation and jet transport. The TaylorSeries.jl package already defines a truncated Taylor1 variable type, with built in automatic differentiation routines to operate on them. The OrdinaryDiffEq.jl library, though not originally developed for jet transport, can handle Taylor1 variables as initial conditions when loaded alongside the TaylorIntegration.jl package, propagating them exactly as described in Section \ref{jettransport} on jet transport. Note that OrdinaryDiffEq.jl's built-in DP8 (order 8/5/3 Dormand-Prince Runge-Kutta) adaptive step size integration algorithm was used in this study, not the Taylor integrator of TaylorIntegration.jl. 

Some example manifolds computed using these tools are presented in Section \ref{demoSection}. 
For all example calculations, the final $d$ degree truncated series $W_{\leq d}(k,s)$ were found to satisfy $\Phi_{\tau(k)}(W_{\leq d}(k,s)) - W_{\leq d}(k+1 \mod m, \lambda s) =0$ up to order $s^d$, for each $k = 0, \dots, m-1$; we went up to truncation order $d=20$ or 25 in our series computations. In the $W_{d}(k,s)$ step, we truncate all series at $s^d$ for the automatic differentiation and jet transport steps; this optimizes computational time \& storage requirements.  

As a final remark, note that if $W(k, s)$ is a solution of Equation \eqref{invariancequationfinal}, then so is $W(k, \alpha s)$ for any $\alpha \in \mathbb{R}$. Sometimes, the jet transport integration may struggle to converge due to fast-growing coefficients $W_{j}(k)$ of $W(k, s)$; conversely, fast-shrinking $W_{j}(k)$ can lead to numerical errors in computing $W(k,s)$. In either case, scaling $W(k, s)$ to some $W(k, \alpha s)$ can help. To do this, simply multiply $W_{1}(k)$ by $\alpha$ and then restart the order-by-order algorithm of Section \ref{paramsection}; $\alpha$ should be chosen so that the $W_{j}(k)$ neither grow too rapidly nor shrink to zero. Such an $\alpha$ can be found by running a preliminary calculation of $W(k, s)$, and fitting an exponential growth rate to the resulting coefficients;  alternatively, simple trial and error also often works. 

\subsection{Step 3: fundamental domains of validity for manifold curve parameterizations $W(k,s)$} \label{funDomainSection}

The $d$ degree Taylor parameterizations $W_{\leq d}(k,s)$ of the stable/unstable manifolds of $X(k)$ under the maps $\Phi_{\tau(k)}$ will be much more accurate than their linear approximations by $\bold{\bar v}_{s}(k)$ or $\bold{\bar v}_{u}(k)$. Nevertheless, they will still be inexact due to series truncation error; furthermore, even the exact infinite series $W(k,s)$ satisfying Eq. \eqref{invariancequationfinal} would only be valid for $s$ within some radius of convergence. Hence, one must determine the values of $s \in \mathbb{R}$ for which $W_{\leq d}(k,s)$ accurately represents curves on the stable/unstable manifold. 

Fix an error tolerance, say $E_{tol} = 10^{-5}$ or $10^{-6}$. We now find what \cite{haroetal} calls the fundamental domain of $W_{\leq d}(k, s)$: the largest set $\mathcal D = \{0, 1, \dots, m-1\} \times (-D,D)$ such that for all $(k,s) \in \mathcal{D}$, the error in invariance Eq. \eqref{invariancequationfinal} is less than $E_{tol}$. In other words, we seek the largest $D \in \mathbb{R}^{+}$ such that for all $s$ with $|s| < D$, 
\begin{equation} \max_{k=0, 1, \dots, m-1} \left\|\Phi_{\tau(k)}(W_{\leq d}(k, s)) - W_{\leq d}(k+1 \mod m, \lambda s)\right\| < E_{tol} \end{equation}
The simplest way to find $D$ is to fix $k$ to some value, and then use bisection to find the largest $D_{k}$ such that $\left\|\Phi_{\tau(k)}(W_{\leq d}(k, s)) - W_{\leq d}(k+1 \mod m, \lambda s)\right\| < E_{tol} $ for all $s \in (-D_{k},D_{k})$. After doing this for each value of $k=0,1, \dots, m-1$, $D$ will be the minimum of all the $D_{k}$. 
	


\subsection{Steps 4 and 5: finding the local and then global manifolds $W_p(k,s)$ on the Poincar\'e section  $\Sigma$}  \label{sectionGlobo}

With the series solution $W(k,s)$ to Equation \eqref{invariancequationfinal} computed along with its fundamental domain of validity $|s|<D$, we now seek to use this to find the stable/unstable manifold intersection curves $W^{s}_\Sigma=W^s(\gamma) \cap \Sigma$ or $W^{u}_\Sigma=W^u(\gamma) \cap \Sigma$ with the Poincar\'e section $\Sigma$. Recall from Section \ref{overviewSection} that we seek $W_{p}^{u}, W_{p}^{s}: \{0, 1, \dots, m-1\} \times \mathbb{R} \rightarrow \Sigma$ parameterizing $W^{s}_\Sigma$, $W^{u}_\Sigma$. The equation to be satisfied by $W_{p}^{u}, W_{p}^{s}$ was  
\begin{equation} \label{wpInvariance2} P(W_{p}(k,s)) = W_{p}(k+1 \mod m, \lambda s) \end{equation}
Except at $s=0$, the curves $W(k,s)$ do not satisfy Equation \eqref{wpInvariance2}; they do satisfy the similar Equation \eqref{invariancequationfinal} though. Moreover, as described in Section \ref{paramsection}, the curves $W(k,s)$ lie \emph{near} $\Sigma$ for $s$ near 0, as $W(k,0) = X(k) \in \Sigma$. In practice, this is found to hold for all $s$ within the fundamental domain of $W=W_{\leq d}(k,s)$. 

\subsubsection{Representing the local manifold on $\Sigma$}

Since the curves $W(k,s)$ lie on the 2D stable or unstable manifold of $\gamma$, the desired 1D intersection curves $W_p(k,s)$ of said 2D manifold with Poincar\'e section $\Sigma$ can be found by propagating points from the $W(k,s)$ curves to $\Sigma$. Since each $W(k,s)$ lies near $\Sigma$, only a short forwards or backwards integration will be required. 
More precisely, recall that our Poincar\'e section $\Sigma$ lies in the zero level set of some continuous $\sigma: \mathbb{R}^4 \rightarrow \mathbb{R}$. Also recall $\dot \sigma(\bold{x})= \nabla \sigma(\bold{x}) \cdot [\frac{\partial H}{\partial p_{x}}, \, \frac{\partial H}{\partial p_{y}}, \, -\frac{\partial H}{\partial x}, \, -\frac{\partial H}{\partial y} ] : \mathbb{R}^4 \rightarrow \mathbb{R}$ from Section \ref{sectionSection}, and define maps $P_+, P_- : \mathbb{R}^4 \rightarrow \Sigma$ which propagate points of $\mathbb{R}^4$ forwards or backwards respectively to their first crossing with $\Sigma$. 
All this can be used to define the manifold curves $W_{p}(k,s) \in \Sigma$ in terms of $W(k,s)$, as follows: 
\begin{equation} \label{wpFinal}
W_{p}(k,s)=\begin{cases} 
        W(k,s)  &\text{if $\sigma(W(k,s)) = 0$}, \\
      P_{+}(W(k,s))  &\text{if $\sigma(W(k,s)) \dot \sigma(W(k,s))<0$}, \\
      P_{-}(W(k,s))  &\text{if $\sigma(W(k,s)) \dot \sigma(W(k,s))>0$}
   \end{cases} 
\end{equation} 
Equation \eqref{wpFinal} is valid in the fundamental domain of $W$ defined in Section \ref{funDomainSection}, for $|s|<D$. While $W_p$ thus defined does not give polynomial expressions for the corresponding curves, being defined through the maps $P_{+}, P_{-}$ numerically, Equation \eqref{wpFinal} allows for pointwise evaluation of the $W_p$ curves within $|s|<D$.

To understand Equation \eqref{wpFinal}, note that if at $W(k,s)$ one has $\sigma$ and $\dot \sigma$ with opposite signs and $\sigma$ small, then propagating the point $\mathbf{x}(0)=W(k,s)$ forwards will result in $\sigma(\mathbf{x}(t))\approx \sigma(\mathbf{x}(0)) + \dot \sigma(\mathbf{x}(0)) t$ crossing zero after a short integration. At this time $t$, $\mathbf{x}(t)$ should be in $\Sigma$. Similarly, if $\sigma$ and $\dot \sigma$ have the same signs, then one should propagate backwards in order for $\sigma(\mathbf{x}(t))$ to cross zero (and $\mathbf{x}(t)$ to cross $\Sigma$) very quickly. Since $\Phi_{\tau(k)}(W(k,s)) = W(k+1 \mod m, \lambda s)$ for $|s|<D$, and the $W_p(k,s)$ are defined through brief propagations of the $W(k,s)$ to $\Sigma$, one finds that for $|s|<D$ the resulting $W_p(k,s)$ will satisfy Equation \eqref{wpInvariance2}, as desired. 

\subsubsection{Globalization of the local manifold representation $W_p$} \label{globoSection}

The $W_{p}(k,s)$ of Equation \eqref{wpFinal} locally represents $W^{s}_\Sigma=W^s(\gamma) \cap \Sigma$ or $W^{u}_\Sigma=W^u(\gamma) \cap \Sigma$ near the intersections of $\gamma$ with $\Sigma$ -- or equivalently, the local stable or unstable manifold in $\Sigma$ of the Poincar\'e map periodic orbit $X(k)$. However, so far the function $W_p$ is restricted to the fundamental domain $|s|<D$, yielding only a local manifold, whereas generally the manifold's dynamics away from the base periodic orbit $\gamma$ are of more interest for applications. Thus, to find these stable/unstable manifold curves on $\Sigma$ for $|s| > D$, one seeks to extend the function $W_{p}$ to all $(k,s) \in \{0, 1, \dots, m-1\} \times \mathbb{R}$ in a way that agrees with its known values for $|s|<D$ and satisfies Equation \eqref{wpInvariance2}. This is called globalization.

Repeatedly applying Equation \eqref{wpInvariance2} yields the equation $P^{N}(W_{p}(k,s)) = W_{p}(k+N \mod m, \lambda^N s)$, where the superscript $N \in \mathbb{Z}^{+}$ refers to function composition. This in turn can be rewritten as:
\begin{equation} \label{globos} W_{p}(k,s) = P^{-N}(W_{p}(k+N \mod m, \lambda^N s)) \end{equation}
\begin{equation} \label{globou} W_{p}(k,s) = P^{N}(W_{p}(k-N \mod m, \lambda^{-N} s)) \end{equation}
Eq. \eqref{globos}-\eqref{globou} allow us to extend $W_p(k, s)$ to all $s \in \mathbb{R}$. If $W_p$ is a stable manifold with $|\lambda| <1$, choose $N \geq 0$ such that $| \lambda^{N}s | < D$ and use the known values of $W_{p}$ in its fundamental domain to evaluate Eq. \eqref{globos}. If $W$ is instead an unstable manifold with $|\lambda| > 1$, take $N \geq 0$ so that $| \lambda^{-N}s | < D$ and evaluate Eq. \eqref{globou}. The maps $P^{N}$ and $P^{-N}$ can be computed by numerical integration as described in Section \eqref{sectionSection}. The function $W_p(k,s)$ defined in this manner satisfies Equation \eqref{wpInvariance2} for all $(k,s) \in \{0, 1, \dots, m-1\} \times \mathbb{R}$. Thus, Eq. \eqref{globos}-\eqref{globou} yields the desired global representation of the entire stable or unstable manifold in $\Sigma$. 

\subsubsection{Visualization of $W^{s}_\Sigma$, $W^{u}_\Sigma$ on $\Sigma$} \label{globoViz}


In practice, to use the function $W_{p}(k,s)$ to compute and visualize $W^{s}_\Sigma$ or $W^{u}_\Sigma$ for dynamical analysis, we need enough manifold points on $\Sigma$ to draw the corresponding curves, rather than just a few $W_{p}(k,s)$ values. For this, one should first take an evenly-spaced grid of $M$ $s$-values $\{s_{i}\}_i$ from $-D$ to $D$, and then compute and store $W_{p}(k,s_{i})$ for all $i = 1, \dots, M$ and $k=0, 1, \dots, m-1$ using the known polynomial $W(k,s)$ and Equation \eqref{wpFinal}. Then, repeatedly apply $P$ or $P^{-1}$ to the $W_{p}(k, s_{i})$ to get the points $W_{p}(k, \lambda^{N}s_{i})$ if $|\lambda| > 1$ or $W_{p}(k, \lambda^{-N}s_{i})$ if $|\lambda| < 1$, for all $k=0, 1, \dots, m-1$, $i = 1, \dots, M$ and $N = 0, 1, 2, \dots$ up to some $N_{max} \in \mathbb{Z}^{+}$. Also store the corresponding $(k,s)$ values alongside the $W_p$ points found. One thus will have points $W_p(k,s)$ for all $k=0, 1, \dots, m-1$ on a globalized grid of known $s$ values $s \in \{s_{j,g}\}_j$ stretching (possibly far) outside $(-D, D)$. Hence, for each fixed $k$, one can now plot the points $W_p(k,s_{j,g})$ connected by line segments in order of increasing $s_{j,g}$, which will yield plots of the curves $W_p(k,s)$ on $\Sigma$. Recall that any such curve will in fact lie inside a 2D fixed-energy Poincar\'e section $\Sigma_C \subset \Sigma$, with $C$ being the (constant) value of the energy along the orbit $\gamma$; thus, only two coordinates are required for these plots and subsequent analysis. 

\section{Computation of Heteroclinic Connections } \label{heteroclinicSection}

With the stable/unstable manifolds of periodic orbits parametrized and globalized on the Poincar\'e section $\Sigma$, we now demonstrate how to use these results to find heteroclinic connections between orbits. As in Section \ref{overviewSection}, let $\gamma_{1}$ and $\gamma_{2}$ be two periodic orbits between which we wish to find heteroclinic connections; without loss of generality, we will seek heteroclinics \emph{from} $\gamma_{1}$ \emph{to} $\gamma_{2}$. Let $m_{1}, m_{2} \in \mathbb{Z}^{+}$ denote\footnote{We make a slight abuse of notation here; these newly-introduced $m_1, m_2$ have no relation to the $m_1, m_2$ of Section \ref{modelsection}} the number of intersection points of orbits $\gamma_1$ and $\gamma_2$ respectively with $\Sigma$. Now, suppose we have used the methods of Section \ref{parambigsection} to find $W_{1p}^{u}: \{0, \dots, m_{1}-1\} \times \mathbb{R} \rightarrow \mathbb{R}^{4}$ and $W_{2p}^{s}: \{0, \dots, m_{2}-1\} \times \mathbb{R} \rightarrow \mathbb{R}^{4}$ parameterizing the orbits' unstable and stable manifold $\Sigma$-intersection curves $W^{u}_{1\Sigma}=W^{u}(\gamma_{1}) \cap \Sigma$ and $W^{s}_{2\Sigma}=W^{s}(\gamma_{2}) \cap \Sigma$. Denoting the associated unstable/stable multipliers as $\lambda_{u1}$ and $\lambda_{s2}$ so that $W_{1p}^{u}$ and $W_{2p}^{s}$ satisfy Equation \ref{wpInvariance2}, we thus know that 
\begin{equation} \label{wpInvarianceU1} P(W^{u}_{1p}(k,s)) = W^{u}_{1p}(k+1 \mod m_{1}, \lambda_{u1} s)  \end{equation}
\begin{equation} \label{wpInvarianceS2} P(W^{s}_{2p}(k,s)) = W^{s}_{2p}(k+1 \mod m_{2}, \lambda_{s2} s)  \end{equation}

Now, any heteroclinic connection from $\gamma_{1}$ to $\gamma_{2}$ will correspond to an intersection of the unstable manifold $W^{u}_{1\Sigma}$ with the stable manifold $W^{s}_{2\Sigma}$. Such intersections in turn correspond to solutions of the equation 
\begin{equation} \label{intersectcondition} W^{u}_{1p}(k_{1},s_{1}) = W^{s}_{2p}(k_{2},s_{2}) \end{equation}
where $(k_{1},s_{1}, k_{2},s_{2}) \in \mathbb{Z} \times \mathbb{R} \times \mathbb{Z} \times \mathbb{R}$ are the unknowns to be found. Given such a solution, either side of Equation \eqref{intersectcondition} yields the desired manifold intersection point in $\Sigma \subset \mathbb{R}^{4}$ and resulting heteroclinic trajectory. To solve Equation \eqref{intersectcondition} accurately, we will follow the 3-step approach outlined in Section \ref{overviewSection} for goal 2. We now explain these 3 steps in detail in the following subsections. 

\subsection{Step 1: the method of layers for restricting the connection search} \label{stepLayers}

To solve Equation \eqref{intersectcondition}, it would help to be able to restrict our solution search to only certain regions of $(k_{1},s_{1}, k_{2},s_{2})$ space. To this end, we define the concept of layers. This method was also used in our previous paper \cite{kumar2025siads} on heteroclinics between invariant tori in higher-dimensional systems, and the overall idea is almost exactly the same; thus, the following discussion is largely adapted from that paper. 

Let $|s_{1}| < D_{1}$ and $|s_{2}| < D_{2}$ be fundamental domains (Section \ref{funDomainSection}) for the manifold parameterizations $W_{1p}^{u}$ and $W_{2p}^{s}$, respectively.  Now, define subsets $U_{N}^{+}$, $U_{N}^{-} \subset W_{1\Sigma}^{u}$  and $S_{N}^{+}$, $S_{N}^{-} \subset W_{2\Sigma}^{s}$, $N \in \mathbb{Z}$ as follows:
\begin{gather} \label{layerDef}
U_{N}^{+} = \{ W_{1p}^{u}(k,s) : (k,s) \in  \{0, \dots, m_{1}-1\}  \times [D_{1}\lambda_{u1}^{N-1}, D_{1}\lambda_{u1}^{N}]\} \\
U_{N}^{-} = \{ W_{1p}^{u}(k,s) : (k,s) \in \{0, \dots, m_{1}-1\}  \times [-D_{1}\lambda_{u1}^{N-1}, -D_{1}\lambda_{u1}^{N}]\} \\
S_{N}^{+} = \{ W_{2p}^{s}(k,s) : (k,s) \in  \{0, \dots, m_{2}-1\}  \times [D_{2}/\lambda_{s2}^{N-1}, D_{2}/\lambda_{s2}^{N}]\} \\
\label{layerDef2}
S_{N}^{-} = \{ W_{2p}^{s}(k,s) : (k,s) \in  \{0, \dots, m_{2}-1\}  \times [-D_{2}/\lambda_{s2}^{N-1}, -D_{2}/\lambda_{s2}^{N}]\} 
\end{gather}
Finally, define $U_{N} = U_{N}^{+} \cup U_{N}^{-}$ and $S_{N} = S_{N}^{+} \cup S_{N}^{-}$. We refer to the subsets $U_{N}$ and $S_{N}$ as layers, and to $U_{N}^{+} $, $S_{N}^{+} $ and $U_{N}^{-}$, $S_{N}^{-}$ as positive and negative half-layers, respectively; all these layers (and half-layers) simply consist of $2m_1$ or $2m_2$ (and $m_1$ or $m_2$) finite connected segments of the manifold curves. Based on prior experience, $W_{1p}^{u}(k_{1}, s_{1})$ and $W_{2p}^{s}(k_{2}, s_{2})$  do not intersect for $|s_{1}| < D_{1}$ and $|s_{2}| < D_{2}$; this can also easily be verified for any pair of manifolds by plotting their corresponding curves on $\Sigma$ in 2D coordinates.  Hence, if $W_{1p}^{u}$ and $W_{2p}^{s}$ intersect, it must be that $U_{N_{1}}$ intersects  $S_{N_{2}}$ for some $N_{1}, N_{2} \in \mathbb{Z}^{+}$. 

The most important property of these layers is that due to Equations \eqref{wpInvarianceU1}-\eqref{wpInvarianceS2}, we have that $P(U_{N}) = U_{N+1}$ and $P(S_{N}) = S_{N-1}$; more generally, $P^{K}(U_{N}) = U_{N+K}$ and $P^{K}(S_{N}) = S_{N-K}$ for all $K \in \mathbb{Z}$. This allows us to restrict the heteroclinic connection search to only certain pairs of layers of $W_{1\Sigma}^{u}$ and $W_{2\Sigma}^{s}$. To see this, suppose we are searching for a heteroclinic connection which comes from layer $U_{N_{1}}$ intersecting layer $S_{N_{2}}$ at $\bold{x} \in \Sigma \subset \mathbb{R}^{4}$. Then, since $P(U_{N}) = U_{N+1}$ and $P(S_{N}) = S_{N-1}$, we have that $P(\bold{x})$ must belong to both $U_{N_{1}+1}$ and $S_{N_{2}-1}$. More generally, for all $K \in \mathbb{Z}$, we have that 
\begin{equation} \label{layerintersection} P^{K}(\bold{x}) \in U_{N_{1}+K} \cap S_{N_{2}-K} \end{equation} 
Now, if $N_{1}$ and $N_{2}$ are both odd or both even, using $K = \frac{N_{2} - N_{1}}{2} $ in Equation \eqref{layerintersection} gives us $P^{K}(\bold{x}) \in U_{\tilde N} \cap S_{\tilde N}$, where $ \tilde N \stackrel{\text{def}}{=} \frac{N_{1} + N_{2}}{2}$. On the other hand, if $N_{1}$ and $N_{2}$ are of opposite parity, setting $K = \frac{N_{2} - N_{1}+1}{2}$ in Equation \eqref{layerintersection} gives us $P^{K}(\bold{x}) \in U_{\tilde N} \cap S_{\tilde N -1}$, where $ \tilde N \stackrel{\text{def}}{=} \frac{N_{1} + N_{2} + 1}{2}$. 

When searching for the heteroclinic trajectory which arises due to the manifolds' intersection at $\bold{x}$, it is enough to find any point on the orbit of $\bold{x}$ under the map $P$, including $P^{K}(\bold{x})$ from the preceding analysis. Based on the previous discussion, it is clear that we will find the point $P^{K}(\bold{x})$ if we look for intersections of pairs of layers of form $(U_{N}, S_{N})$ or $(U_{N}, S_{N-1})$ for $N \in \mathbb{Z^{+}}$ (as mentioned earlier, our experience is that the manifolds do not intersect for $|s_{1}| < D_{1}$ and  $|s_{2}| < D_{2}$, so we only consider positive $N$). Since $\bold{x}$ was an arbitrary heteroclinic point, if we search for intersections of pairs of layers of the form just presented above, we will find all possible heteroclinic trajectories. 

As a final note, it is easy to see that if $U_{N_{1}}$ intersects $S_{N_{2}}$, then the resulting heteroclinic connection requires $N_{1}+N_{2}$ Poincar\'e mappings to go from the fundamental domain of $W_{1p}^{u}$ to the domain of $W_{2p}^{s}$. Hence, the layer indices can be thought of as a rough proxy for the connection trajectory time of flight. 

\subsection{Step 2: intersecting line segments approximating manifold curves} \label{stepSegments}

With the connection search now restricted to only layer pairs of form $(U_{N}, S_{N})$ or $(U_{N}, S_{N-1})$, $N \in \mathbb{Z^{+}}$, we next discuss how to find intersections between layers. The first step of this, as mentioned in step 2 of goal 2 from Section \ref{overviewSection}, is to quickly search for and tightly bound  solutions to Equation \eqref{intersectcondition}. This is achieved by representing the manifold curves using line segments and finding their intersections. The final step will be the refinement of any solution bounds found, as will be described in the next Section \ref{stepRefine}. 

The layers defined in Section \ref{stepLayers} correspond to certain ranges of $s_{1}$ and $s_{2}$ values. Recall from Section \ref{globoViz} that for visualization and analysis of a manifold parameterized by $W_{p}(k,s)$ with fundamental domain $|s|<D$, one should compute points $W_{p}(k, \lambda^{N}s_{i})$ if $|\lambda| > 1$ or $W_{p}(k, \lambda^{-N}s_{i})$ if $|\lambda| < 1$ for $s_{i}$ on a fine grid from $-D$ to $D$ inclusive, $k=0, 1, \dots, m-1$, and $N = 0, 1, 2, \dots, N_{max}$. Applying this procedure to our specific $W_{1p}^{u}(k_{1},s_{1})$ and $W_{2p}^{s}(k_{2}, s_{2})$ will generate manifold points corresponding to $k_{1}=0, \dots, m_{1}-1$, $k_{2} = 0, \dots, m_{2}-1$, and $s_{1}$ \& $s_{2}$ belonging to finite sets of values $\{s_{1,j_{1}}\}_{j_{1}} \in [-D_{1}\lambda_{u1}^{N_{max}}, D_{1}\lambda_{u1}^{N_{max}}]$ and $\{s_{2,j_{2}}\}_{j_{2}} \in [-D_{2} \lambda_{s2}^{-N_{max}}, D_{2} \lambda_{s2}^{-N_{max}}]$. Importantly, these sets $\{s_{1,j_{1}}\}_{j_{1}}$ and $\{s_{2,j_{2}}\}_{j_{2}}$ will contain the layer boundary $s$-values of form $s_{1}= \pm D_{1}\lambda_{u1}^{N}$ and $s_{2}=\pm D_{2}\lambda_{s2}^{-N}$ from Equations \eqref{layerDef}-\eqref{layerDef2} for all $N = 0, 1, \dots, N_{max}$. 

For each $M, N < N_{max}$, define finite sets of points $U_{N,f} = \{W_{1p}^{u}(k_{1},s_{1,j_{1}}): s_{1,j_{1}} \in \pm[D_{1}\lambda_{u1}^{N-1}, D_{1}\lambda_{u1}^{N}]\} \subset U_{N}$ and $S_{M,f} = \{W_{2p}^{s}(k_{2},s_{2,j_{2}}): s_{2,j_{2}} \in \pm[D_{2}/\lambda_{s2}^{M-1}, D_{2}/\lambda_{s2}^{M}]\} \subset S_{M}$. Given the previous discussion, both $U_{N,f} $ and $S_{M,f}$ will include all boundary points of the layers $U_{N}$ and $S_{M}$ they are respectively contained in. 
Now, to search for intersections between $U_{N}$ and $S_{M}$, we will approximate the manifold curve segments contained therein by connecting points of $U_{N,f}$ and $S_{M,f}$ with line segments, joining points at each fixed $k_{1}$ or $k_{2}$ in order of consecutive increasing $s_{1,j_{1}}$ or $s_{2,j_{2}}$. This creates linearly-interpolated representations of $U_{N}$ and $S_{M}$ up to their boundaries. Since all manifolds and layers lie within a 2D fixed-energy Poincar\'e section $\Sigma_C \subset \Sigma \subset \mathbb{R}^{4}$, we can thus search for intersections of $U_{N}$ and $S_{M}$ by checking if any segment between points of $U_{N,f}$ intersects any segment connecting points of $S_{M,f}$ when \emph{projected into 2D section coordinates}.


It is very simple to check whether two line segments intersect in 2D. Given endpoints $\bold{x}_{1}, \bold{x}_{2} \in \mathbb{R}^{2}$ for segment 1 and $\bold{y}_{1}, \bold{y}_{2} \in \mathbb{R}^{2}$ for segment 2, one just needs to solve the $2 \times 2$ linear system of equations
\begin{equation} \label{precise} \bold{x_{1}} + (\bold{x_{2}}-\bold{x_{1}})a = \bold{y_{1}} + (\bold{y_{2}}-\bold{y_{2}})b \end{equation}
for $a,b \in \mathbb{R}$ and check whether $a,b \geq 0$, $a \leq 1$, and $b \leq 1$. If so, it is easy to see that the LHS and RHS of Equation \eqref{precise} are themselves equal to the intersection point of the line segments. Since this is a $2\times2$ system, the solution of Equation \eqref{precise} can easily be written explicitly in terms of the coordinates of $\bold{x}_{1}, \bold{x}_{2}, \bold{y}_{1},$ and $\bold{y}_{2}$, making it conducive to implementation even on a GPU. Indeed, the search for intersections between $U_{N,f}$ segments and $S_{M,f}$ segments is embarrassingly parallel and benefits greatly from parallelization. 


If an intersection is found between a segment connecting $U_{N,f}$ points and a segment between $S_{M,f}$ points, we record 8 corresponding parameter values: the $(k_1, s_1)$ values for the (two) endpoints of the $U_{N,f}$ segment and the $(k_2, s_2)$ values for the endpoints of the $S_{M,f}$ segment. The two $U_{N,f}$ endpoints will share the same $k_1$ value but have consecutive $s_1$ values ($s_{1,j_1}$ and $s_{1,j_1+1}$). Similarly, the $S_{M,f}$ endpoints share the same $k_2$ value with consecutive $s_2$ values ($s_{2,j_2}$ and $s_{2,j_2+1}$). These parameter values provide the initial bounds for a more precise search. While the solution to Equation \ref{precise} can be used to interpolate these $s_{1}$ and $s_{2}$ values and estimate a solution to Equation \eqref{intersectcondition}, we will instead use the saved $k_{1}, s_{1}, k_{2}$, and $s_{2}$ values to initiate a bisection process, as described in Section \ref{stepRefine}. This will yield a much more accurate heteroclinic solution. 

This search procedure of checking intersections of $U_{N,f}$ and $S_{M,f}$ segments is done for $N=0, 1, \dots, N_{max}$, at each of whose values we only consider $M=N$ and $N-1$ given the discussion of Section \ref{stepLayers}.


\begin{remark} Since one can only compute finitely many manifold points, one practical challenge that can arise in this step is that parts of the line segment-based manifold curves can become visually highly ``discontinuous''. Segment endpoints can grow very far apart despite being propagated from nearby points or may be scattered in directions very different from others at nearby $s$-values. For example, this occurs in the {PCRTBP} when manifold trajectories pass very close to the smaller massive body. In such cases, ``discontinuous segments'' should be excluded from the intersection search. A simple heuristic for detecting these cases is to check if the distance between consecutive points $\| W_{1p}^{u}(k_{1}, s_{1,j_{1}+1}) - W_{1p}^{u}(k_{1}, s_{1,j_{1}}) \|$ is unusually large or significantly greater than the distance between the previous points $\| W_{1p}^{u}(k_{1}, s_{u,j_{1}}) - W_{1p}^{u}(s_{u,j_{1}-1}) \|$  (similarly for $W_{2p}^{s}$). 
\end{remark}

\subsection{Step 3: refinement of approximate solutions} \label{stepRefine}

With the algorithm of Section \ref{stepSegments} serving to find intersecting segments of points from $W_{1p}^{u}$ and $W_{2p}^{s}$, as well as the $k_{1}$, $s_{1}$, $k_{2}$, and $s_{2}$ values corresponding to their endpoints, the final step of the heteroclinic search uses this information to find a highly accurate solution $(k_{1},s_{1}, k_{2},s_{2}) \in \mathbb{Z} \times \mathbb{R} \times \mathbb{Z} \times \mathbb{R}$ to the heteroclinic Equation \eqref{intersectcondition} using bisection. This step no longer relies on the line segments representation of the manifolds from the previous step; instead, we henceforth use Eqs. \eqref{globos}-\eqref{globou} to evaluate the manifold points. 

Suppose that in the previous step, an intersection was found between the segments with endpoints $\{W_{1p}^{u}(k_{1}, s_{1,j_{1}}), W_{1p}^{u}(k_{1}, s_{1,j_{1}+1})\}$ and $\{W_{2p}^{s}(k_{2}, s_{2,j_{2}}), W_{2p}^{s}(k_{2}, s_{2,j_{2}+1})\}$. Then, the corresponding exact solution to Equation \eqref{intersectcondition} should have $s_{1} \in [s_{1,j_{1}}, s_{1,j_{1}+1}]$ and $s_{2} \in [s_{2,j_{2}}, s_{2,j_{2}+1}]$. 
Let $a_{1,0}= s_{1,j_{1}}$, $b_{1,0}= s_{1,j_{1}+1}$, $a_{2,0}= s_{2,j_{2}}$, and $b_{2,0}= s_{2,j_{2}+1}$. We now use these definitions to start a recursive iteration generating sequences $a_{1,i}, b_{1,i}, a_{2,i}$, and $b_{2,i}$, $i = 0, 1, 2, \dots$ which will converge to the desired $s_{1}$ and $s_{2}$ as follows:
		
	\begin{enumerate}
	\item Let $c_{1,i}=\frac{a_{1,i}+ b_{1,i}}{2}$, and compute $W_{1p}^{u}(k_{1}, c_{1,i})$ using Equation \ref{globou} as described in Section \ref{globoSection}. 
	\item Let $c_{2,i}=\frac{a_{2,i}+ b_{2,i}}{2}$, and compute $W_{2p}^{s}(k_{2}, c_{2,i})$ using Equation \ref{globos} as described in Section \ref{globoSection}. 
	\item Denote the line segments with endpoints $\{W_{1p}^{u}(k_{1}, a_{1,i}), W_{1p}^{u}(k_{1}, c_{1,i})\}$, $\{ W_{1p}^{u}(k_{1}, c_{1,i}), W_{1p}^{u}(k_{1}, b_{1,i})\}$, $\{W_{2p}^{s}(k_{2}, a_{2,i}), W_{2p}^{s}(k_{2}, c_{2,i})\}$, and $\{ W_{2p}^{s}(k_{2}, c_{2,i}), W_{2p}^{s}(k_{2}, b_{2,i})\}$ as $L_{11}, L_{12}, L_{21}$, and $L_{22}$ respectively. 
	Check if either of $L_{11}$ or $L_{12}$ intersects either of $L_{21}$ or $L_{22}$ when projected onto 2D coordinates for $\Sigma$. 
	\item If no intersection was found in the previous step, exit iteration and indicate that no solution exists. If an intersection was found, set $a_{1,i+1}$, $b_{1,i+1}$, $a_{2,i+1}$, and $b_{2,i+1}$ to the $s_{1}$ and $s_{2}$-values of the intersecting segments' endpoints, and return to step 1 for next recursive step. 
   \item End recursive bisection when $|a_{1,i}-b_{1,i}|$ and $|a_{2,i}-b_{2,i}|$ are small enough. 
   	\end{enumerate}
Note that $k_{1}$ and $k_{2}$ do not change during the bisection. If the above process converges in the sense of $|a_{1,i}-b_{1,i}|$ and $|a_{2,i}-b_{2,i}|$ becoming less than some small tolerance, then the desired $s_{1}$ and $s_{2}$ will be very well-approximated by the final values of $\frac{a_{1,i}+ b_{1,i}}{2}$ and $\frac{a_{2,i}+ b_{2,i}}{2}$, respectively. The resulting $(k_{1},s_{1}, k_{2},s_{2}) $ should satisfy the equation $W^{u}_{1p}(k_{1},s_{1}) = W^{s}_{2p}(k_{2},s_{2})$ up to a very small error, whose RHS or LHS thus yields the desired intersection between $W^{u}_{1\Sigma}$ and $W^{s}_{2\Sigma}$ and the corresponding heteroclinic connection as well. 

\begin{remark} 
For the refinement process described here, a Newton method could also have been implemented with the aid of the truncated Taylor series used to define the manifold parameterizations $W_{1p}^{u}$ and $W_{2p}^{s}$. However, note that differentiating any such parameterization $W_{p}(k,s)$ with respect to $s$  requires differentiating Equation \eqref{globos} or \eqref{globou}. This in turn requires differentiating the Poincar\'e map $P^{N}$ or $P^{-N}$, as well as differentiating $W_{p}$ at a smaller $s$ value through Equation  \eqref{wpFinal}, which then requires differentiation of $P^{+}$ or $P^{-}$ as well as the polynomial $W(k,s)$. For the sake of simplicity in implementation, we choose to use bisection instead. 
\end{remark}

\section{ Example Application: Studying Mean Motion Resonances in the Planar CRTBP} \label{demoSection}

The methodology developed in Sections \ref{parambigsection}-\ref{heteroclinicSection} is general, and can be applied to any 2 DOF Hamiltonian system satisfying the assumptions of Section \ref{settingSection}. It was successfully used in a number of previous PCRTBP studies  \cite{rawat2025preprint, kumar2024aug, kumar2024iac} to compute Poincar\'e map stable/unstable manifolds of various resonant periodic orbits in Earth-Moon and Uranus-Oberon systems. Heteroclinics between Earth-Moon resonant orbits were also successfully computed. After a brief discussion of PCRTBP-specific aspects of the method implementation, in this section we include a few examples of the manifolds and heteroclinics computed in those studies, illustrating the effectiveness and utility of the developed parameterization method for orbital mechanics investigations. Full discussion of the dynamical implications of these manifolds' behaviors can be found in the papers \cite{kumar2024aug} (on the Uranus-Oberon system) and \cite{rawat2025preprint, kumar2024iac} (on the Earth-Moon PCRTBP). 

As a note on computational performance, the studies discussed in Sections \ref{UranusSection}-\ref{MoonSection} were carried out on a 2019-era Mac laptop with an Intel i9 8-core CPU and an AMD Radeon Pro 5500M GPU. All algorithms were implemented in the Julia programming language. The calculation of adapted frames, degree-20 manifold parameterizations, and fundamental domains (Sections \ref{step1}-\ref{funDomainSection}) took less than 5 seconds total per manifold in all cases. The globalization/visualization step of Section \ref{globoViz} takes longer due to the many numerical integrations required to compute enough manifold points for plotting; however, this step still generally takes well under 30 seconds, and often less than 15. The computation of heteroclinic connections depends on the number of manifold intersections found; the line segment intersection step 2 (Section \ref{stepSegments}) took less than 0.1 seconds when parallelized on the GPU using OpenCL.jl, while the refinement of each approximate heteroclinic intersection found (step 3, Section \ref{stepRefine}) took between 0.1 and 2 seconds per intersection. 

\subsection{Method implementation in the planar CRTBP}

Recall from Sections \ref{delaunaySection} and \ref{mmrsection} that $\ell=0$ (periapse) is a good Poincar\'e section $\Sigma$ for analyzing PCRTBP interior mean motion resonances (MMRs), and  $\ell= \pi$ (apoapse) is a good section for exterior MMR study. Poincar\'e maps on either $\Sigma$ are computed as described in Section \ref{sectionSection}. An unstable $m$:$n$ MMR periodic orbit will cross either section $m$ times in one period, which will be the number of distinct points $X(k)$ and curves $W(k,s)$, $W_p(k,s)$ to be calculated on that orbit's stable or unstable manifold; we will compute and plot both manifolds for each orbit considered. 

Since each fixed-energy section $\Sigma_C \subset \Sigma$ containing an orbit's Poincar\'e map stable/unstable manifolds is 2D, only two coordinates are needed for visualization and analysis of manifold (and all other) dynamics inside $\Sigma_C$ at any given $C$. All manifold computations are carried out in Cartesian coordinates $(x,y,p_x,p_y) \in \mathbb{R}^4$, after which heteroclinic computations are done by projecting the manifolds into $(x,y)$ coordinates for $\Sigma_C$. While these calculations use Cartesian coordinates,  for visualization we found that the synodic Delaunay coordinates of Section \ref{delaunaySection} yield more illuminating plots. Thus, the calculated $W_p(k,s)$ Cartesian points are converted and plotted in synodic Delaunay coordinates $(L, g)$ in the figures to follow. 

\subsection{Study of MMR overlap in the Uranus-Oberon PCRTBP} \label{UranusSection}

In the paper \cite{kumar2024aug}, the methods of Section \ref{parambigsection} were used to compute stable/unstable manifolds of 3:4, 4:5, and 5:6 exterior and 4:3, 5:4, and 6:5 interior unstable resonant periodic orbits for Poincar\'e maps of the Uranus-Oberon PCRTBP. A few of these stable/unstable manifolds are shown in Figure \ref{fig::ext1} for the exterior MMR manifolds, and Figure \ref{fig::int1} for the interior MMR manifolds. 
\begin{figure}
\includegraphics[width=0.49\columnwidth]{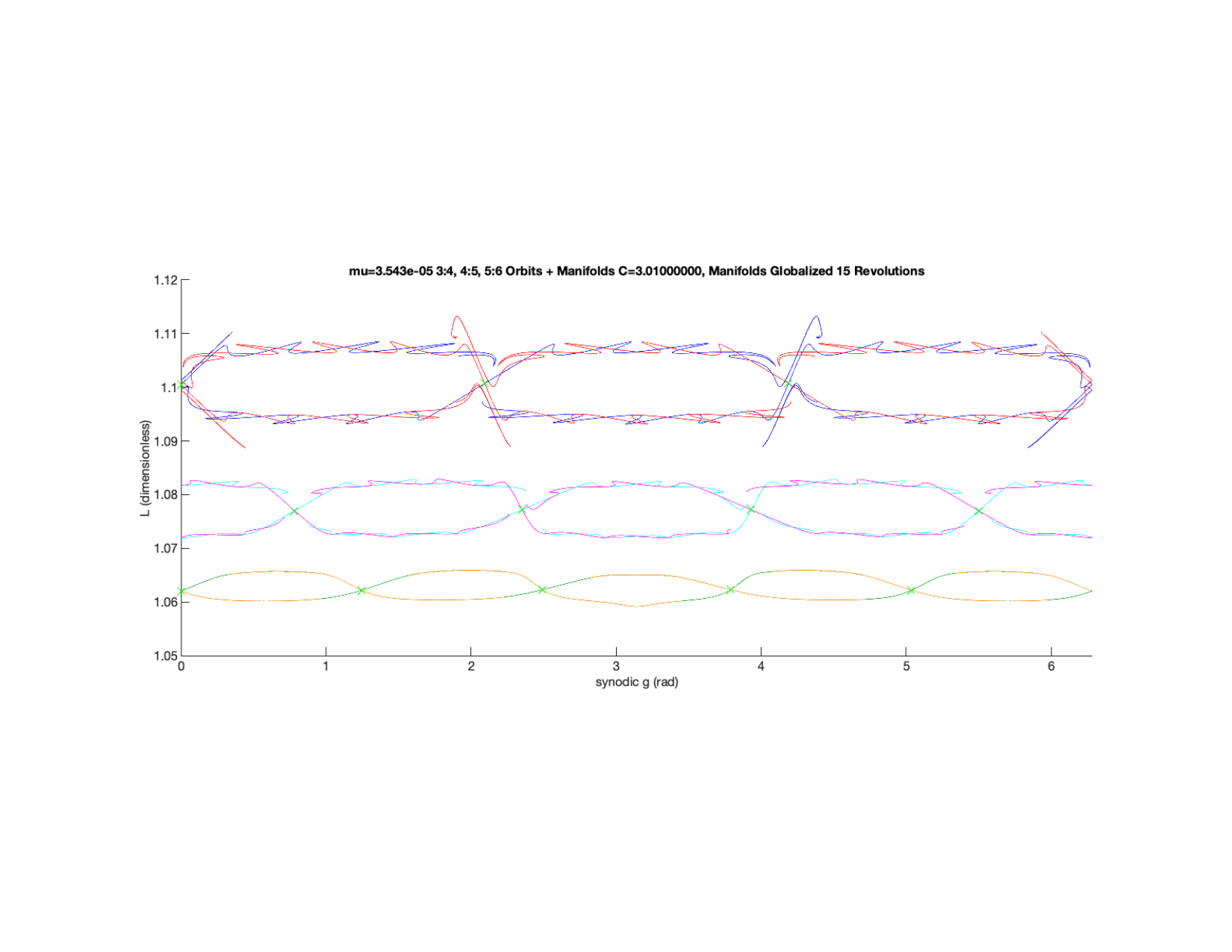}
\includegraphics[width=0.49\columnwidth]{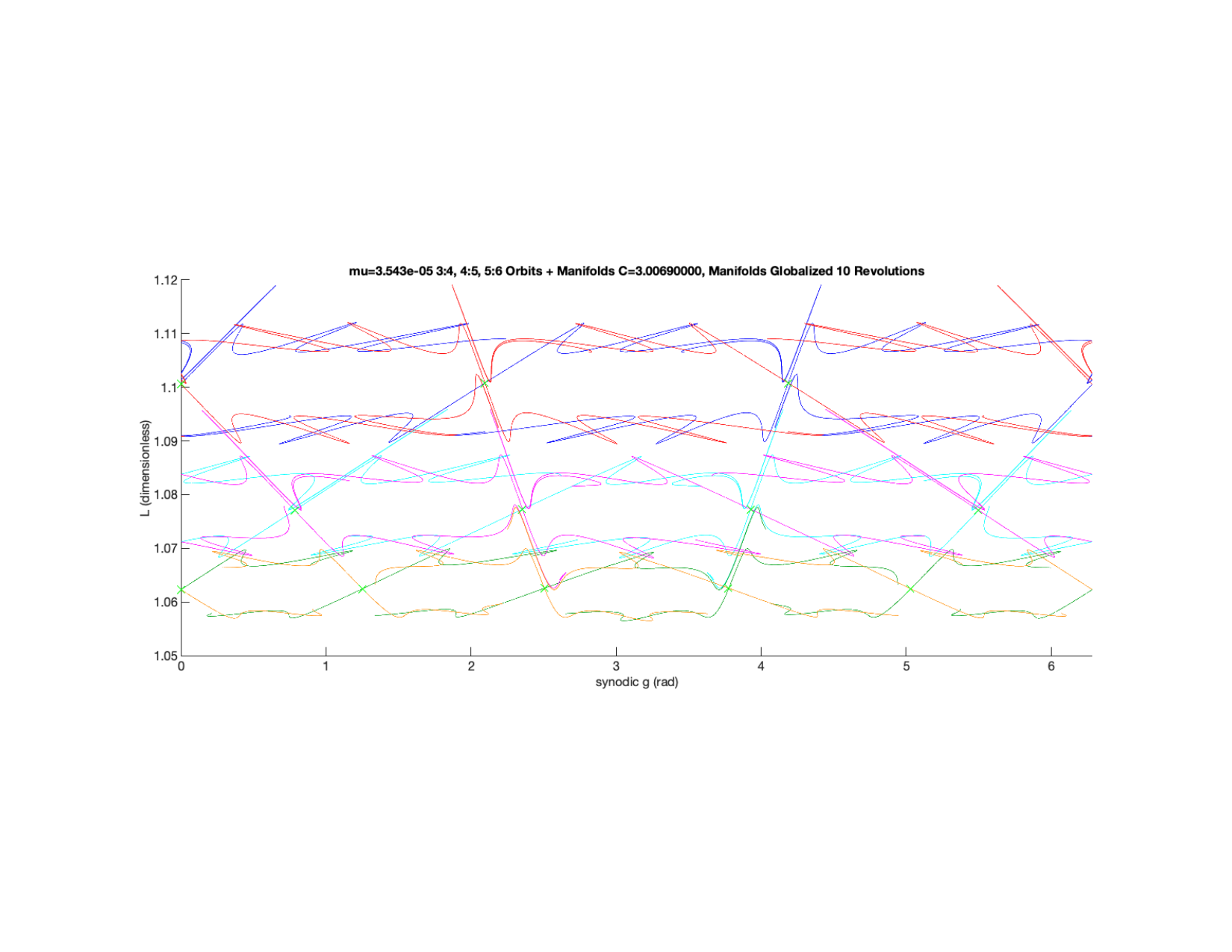}
\caption{ \label{fig::ext1} 3:4 (red/blue), 4:5 (magenta/cyan), and 5:6 (orange/green) unstable and stable manifolds respectively on apoapse section $\ell = \pi$, Jacobi $C=3.0100$ (left) and $C=3.0069$ (right), plotted in synodic Delaunay coordinates $(L,g)$ (from \cite{kumar2024aug})}
\end{figure}
\begin{figure}
\includegraphics[width=0.49\columnwidth]{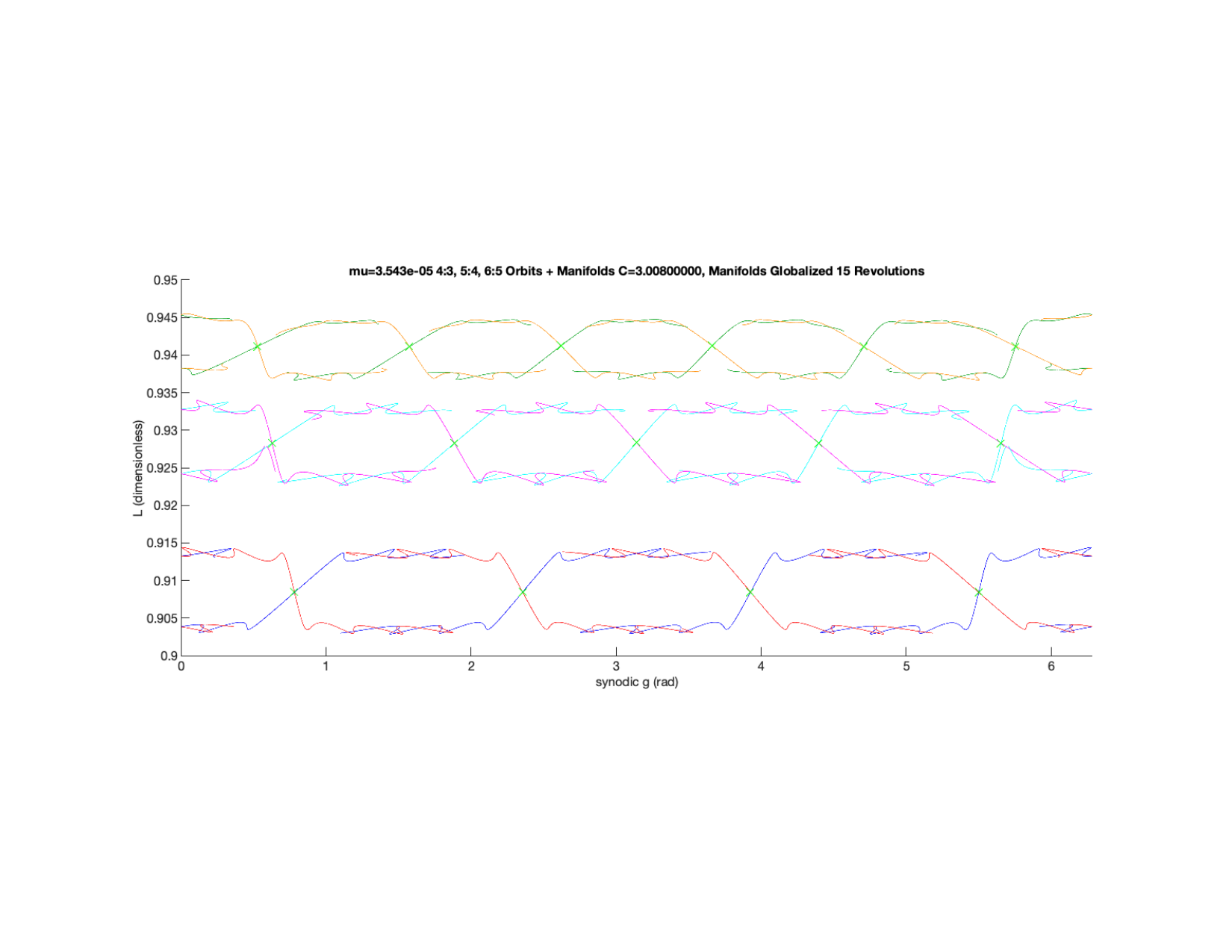}
\includegraphics[width=0.49\columnwidth]{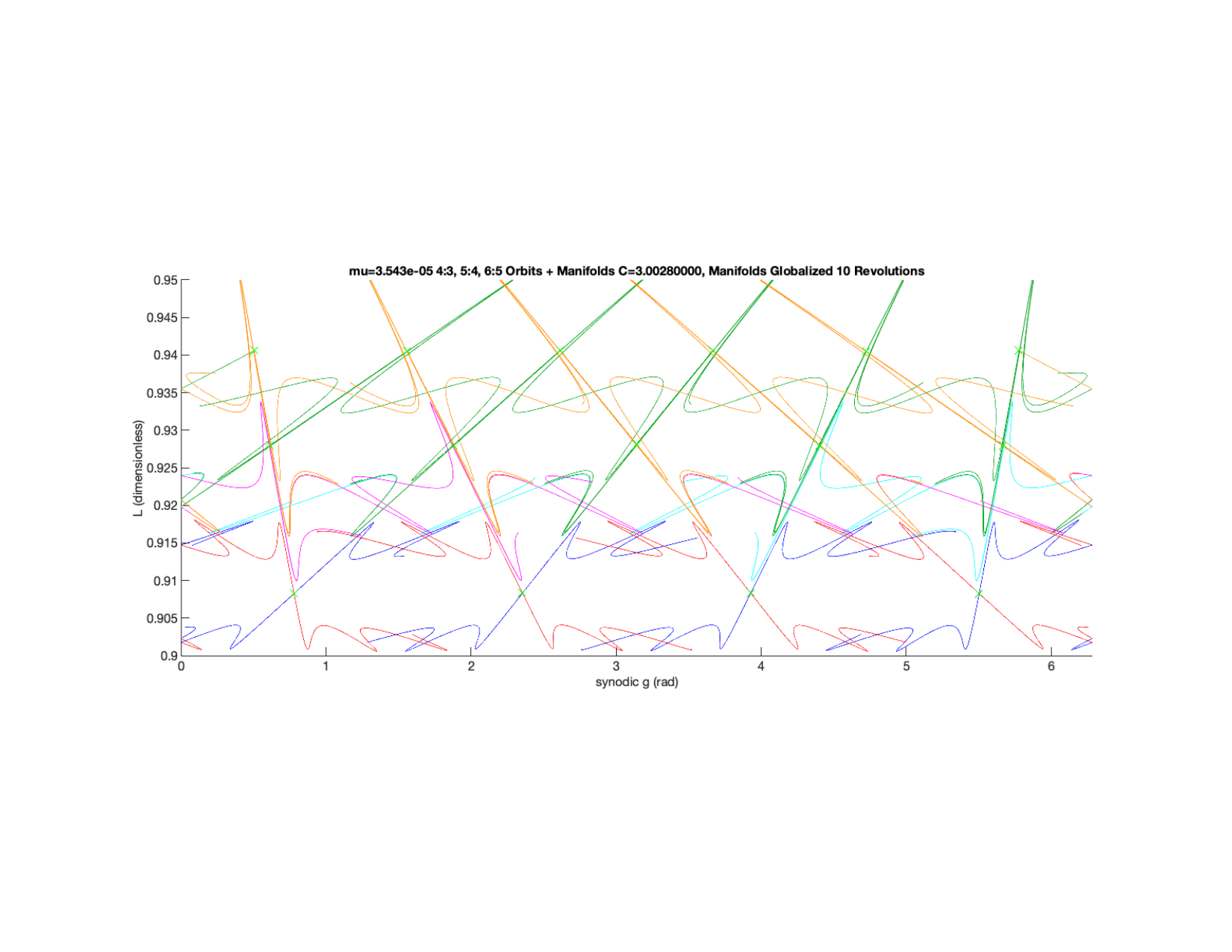}
\caption{ \label{fig::int1} 4:3 (red/blue), 5:4 (magenta/cyan), and 6:5 (orange/green) unstable and stable manifolds respectively on periapse section $\ell = 0$,  Jacobi C=3.0080 (left) and C=3.0028 (right), plotted in synodic Delaunay coordinates $(L,g)$ (from \cite{kumar2024aug})}
\end{figure}
The use of an $\ell=\pi$ section in Figure \ref{fig::ext1} and $\ell=0$ section in Figure \ref{fig::int1}, along with the choice of synodic Delaunay coordinates $(L,g)$ for visualization, has a number of benefits. First of all, the section's  better transversality to the PCRTBP flow ensures that the manifold curves seen in both figures are continuous; compare this with, for example, the jumps seen in the manifold curves of Figure 5 of the paper \cite{kumar2021journal} which used a $y=0$ section. In combination with the use of $(L,g)$ action-angle coordinates, the manifold curves' continuity allows for clear identification of lobes \cite{KoLoMaRo} as well as of intersection points between stable and unstable manifolds. The use of the periapse and apoapse Poincar\'e sections here was thus highly useful; the ability of the method of Section \ref{parambigsection} to accurately compute these long manifold curves in such Poincar\'e sections -- even though the studied unstable resonant periodic orbits each had multiple intersections with it -- aids in enabling this analysis. 

Through computing these manifolds and visually detecting their intersections (or lack thereof), one can determine the energy values at which heteroclinic connections occur between various unstable resonant orbits - a phenomenon is known as \emph{MMR overlap}. For instance, notice that the left plots of Figures \ref{fig::ext1} and \ref{fig::int1}, at Jacobi constants $C=3.0100$ and $3.0080$ respectively, display no overlap between the studied MMRs. The right plots at lower Jacobi constants do. MMR overlap enables global transport across the system phase space, by creating heteroclinic trajectories with large natural changes in $L$. Though not computed in this study, such trajectories are useful for for zero-fuel spacecraft transfers between orbits at different semi-major axis values $a = L^2 / \mu$.  Refer to the paper \cite{kumar2024aug} for more analysis of these behaviors and applications. 

\subsubsection{Accuracy comparison with linear eigenvector manifold approximations}

The variety of periodic orbit manifolds computed in \cite{kumar2024aug} provided a useful set on which to compare accuracy of this paper's nonlinear manifold parameterizations with the commonly-used eigenvector (linear) manifold approximations. This comparison can be carried out by comparing the fundamental domain of $W(k,s)$ (defined in Section \ref{funDomainSection}) with that of the Taylor series $W(k,s)$ truncated to order $s^1$ (which just yields the eigenvector approximation of the manifolds). The study \cite{kumar2024aug} computed stable/unstable manifolds for 3:4, 4:5, and 5:6 exterior and 4:3, 5:4, and 6:5 interior unstable resonant periodic orbits at Jacobi constant values of 3.00 to 3.01 in increments of 0.0001, with Taylor series computed to order $s^{20}$. Thus, fundamental domains of both the degree-20 series as well as of their linear truncations were computed for all these MMRs and Jacobi constants. The ratios of full degree-20 series domains to those of the linear truncations were then calculated, with the resulting data summarized (over all Jacobi constants) in Table \ref{comparisonTable}. 

\begin{table}[]
\begin{centering}
\begin{tabular}{| c | c | c | c | c |}
\hline
MMR     & Minimum     & Maximum  & Mean & Median  \\ \hline
3:4 (exterior) & 179.03 & 2562.78 & 808.37 & 525.38 \\ \hline
4:5   (exterior)  &  158.49  & 3841.14 & 1320.07 & 784.00 \\ \hline
5:6   (exterior)  &   160.97  & 4550.00  & 1533.86 & 496.83  \\ \hline
4:3   (interior)  &  295.18  & 1283.00  & 566.65 &  492.42 \\ \hline
5:4  (interior) &  237.96  & 4268.00  &  1465.14 &  814.88 \\ \hline
6:5  (interior) &  215.96 &   4248.82 &  1448.11 &  908.30 \\
\hline
\end{tabular}
\caption{ \label{comparisonTable} Summary of ratios of degree-20 series fundamental domains with linear truncation fundamental domains for stable/unstable manifolds across all Jacobi constants 3.00 to 3.01, manifolds computed in the paper \cite{kumar2024aug} }
\end{centering}
\end{table}

As can be seen in the table, the method developed in this paper improves accuracy of the computed manifolds very significantly versus the usual linear approximations by eigenvectors. The fundamental domains of the degree-20 series manifold parameterizations were mostly on the order of 0.1, with a few on the order of 0.01; in contrast, the domains of the linear approximations were generally on the order of $10^{-4}$ to $10^{-5}$. The lowest ratio of fundamental ratio improvement in Table \ref{comparisonTable} is 158.49, with the mean improvement ratio across all computed manifolds being 1190.37 -- a major difference, clearly demonstrating the increase in accuracy enabled by the methods of this paper. This accuracy improvement means that less numerical integration is required when globalizing the manifolds, thus also saving computation time.

\subsection{Study of MMRs and resonant transfers in the Earth-Moon PCRTBP} \label{MoonSection}

In \cite{rawat2025preprint, kumar2024iac}, the tools of Section \ref{parambigsection} were used to compute stable/unstable manifolds of 4:1, 3:1, and 2:1 interior unstable resonant periodic orbits for periapse ($\ell = 0$) Poincar\'e maps of the Earth-Moon PCRTBP. Two 4:1 orbits' stable and unstable manifolds are plotted in $(L,g)$ synodic Delaunay coordinates in Figure \ref{fig:41manifolds}; they strongly resemble the shape of separatrices seen in phase portraits of the mathematical pendulum, as expected from Hamiltonian perturbation theory \cite{morbyBook} given that synodic Delaunay coordinates are action-angle variables for the $\mu=0$ PCRTBP. The use of the $\ell=0$ section with accurate manifold parameterizations, enabled by the methods of Section \ref{parambigsection}, allows for computation of very long, continuous manifold curves  (e.g. the right plot of Figure \ref{fig:41manifolds}). The lack of ``excursions'' of these curves to $L$ values far from 0.62, despite a high level of globalization, indicates a lack of heteroclinics between 4:1 and other major MMRs. 

\begin{figure}
\begin{centering}
\includegraphics[width=0.49\columnwidth]{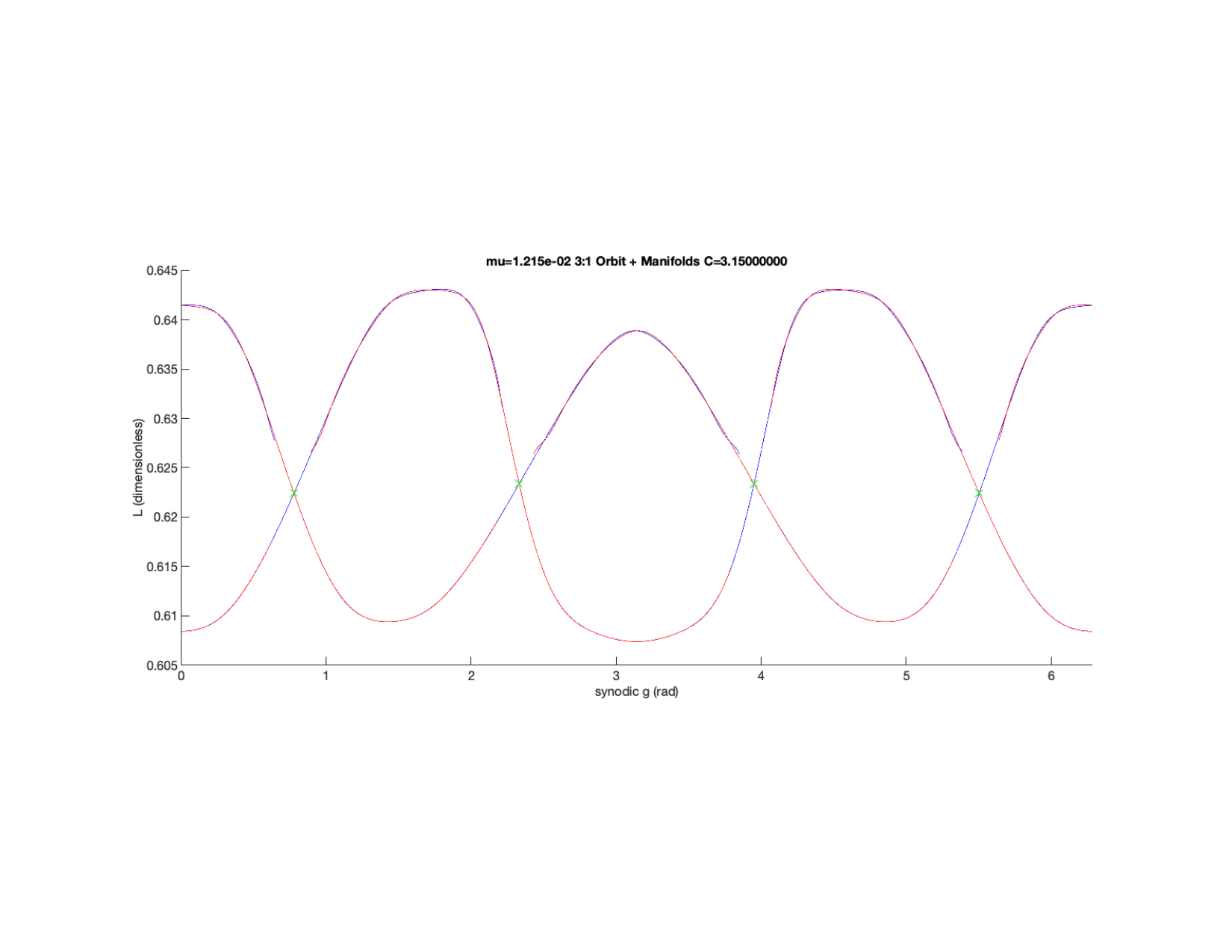}
\includegraphics[width=0.49\columnwidth]{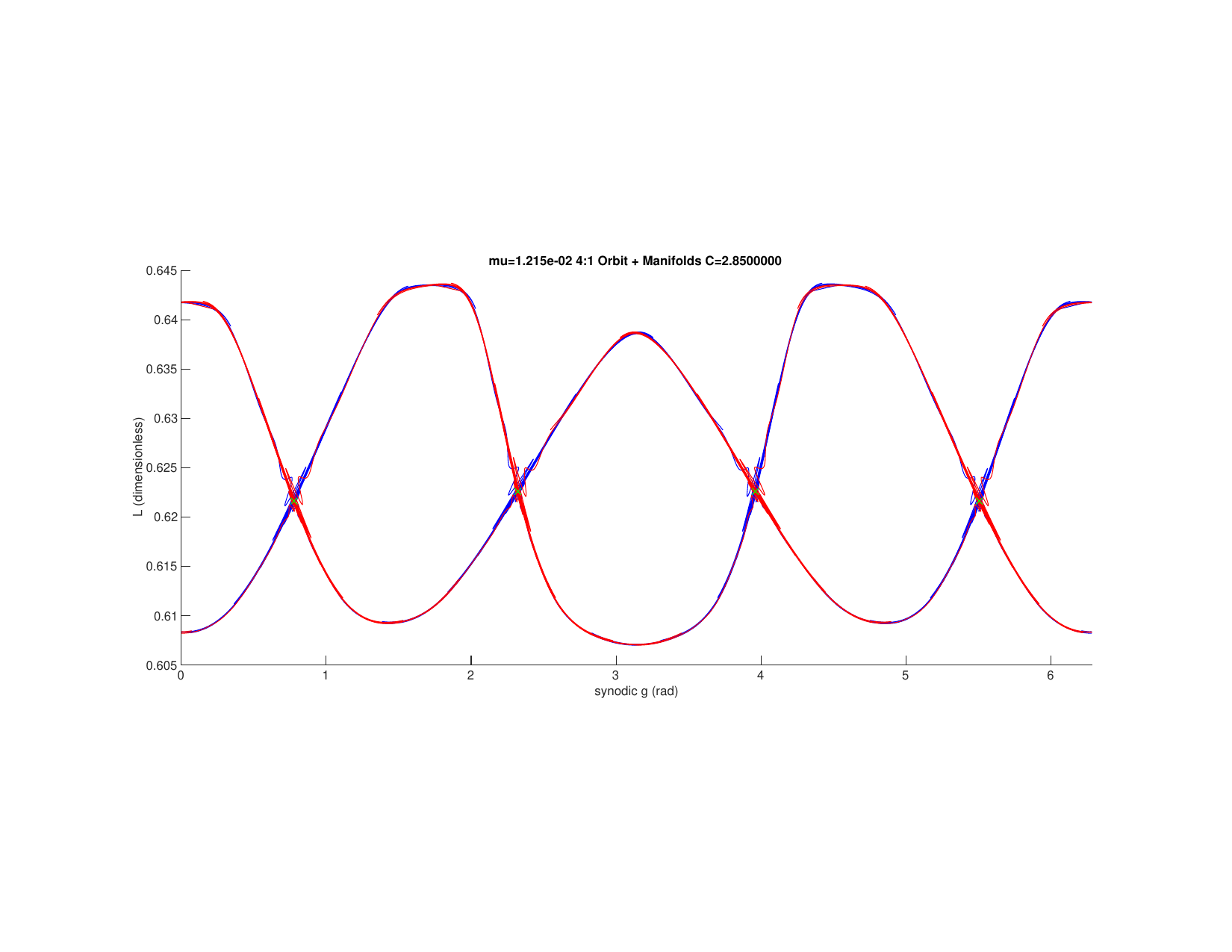}
\caption{ \label{fig:41manifolds} 4:1 stable/unstable manifolds (blue/red), perigee section, for $C = 3.15$ (left), $2.85$ (right) (from \cite{kumar2024iac})}
\end{centering}
\end{figure}

\begin{figure}
\begin{centering}
\includegraphics[width=0.495\columnwidth]{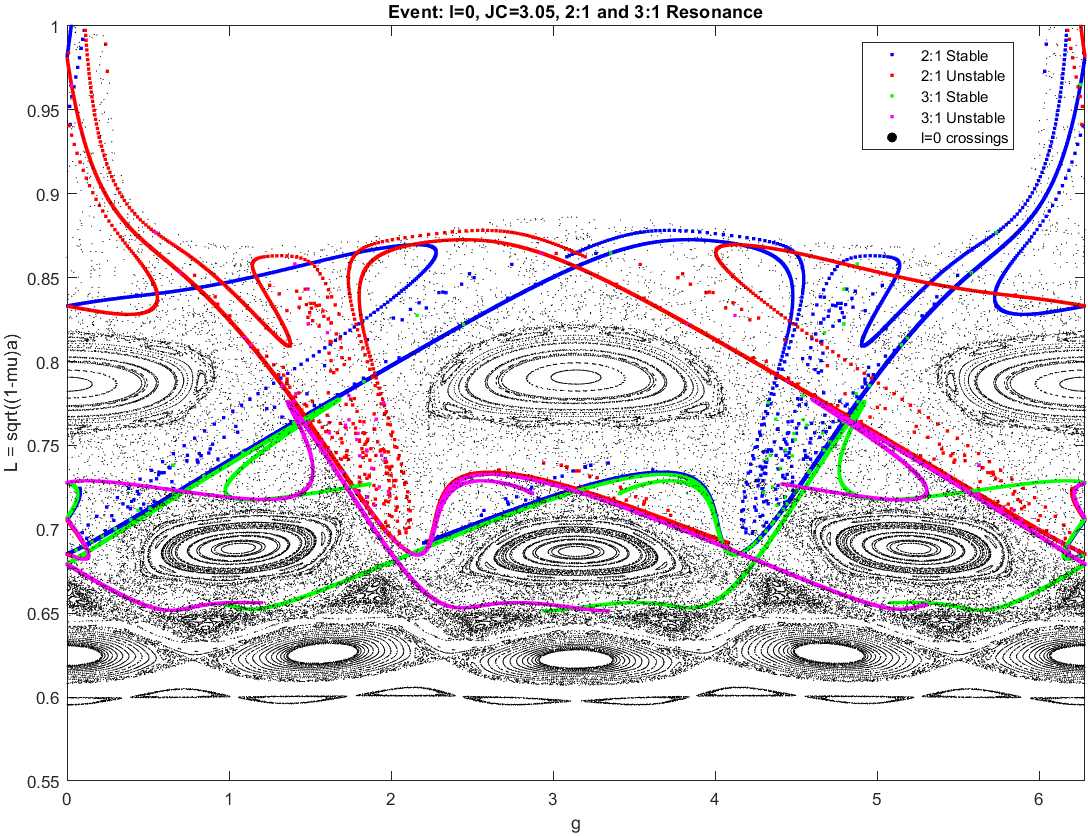}
\includegraphics[width=0.495\columnwidth]{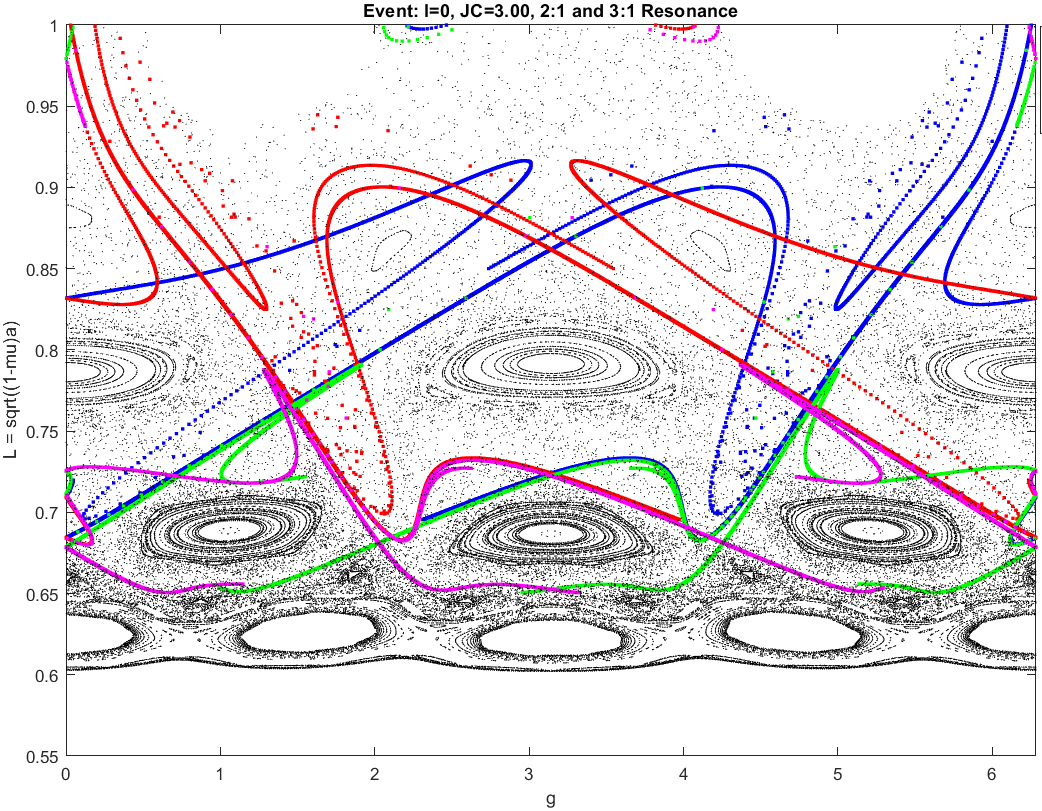}
\caption{ \label{fig:3121manifolds} 3:1 (green/magenta) and 2:1 (blue/red) orbit stable/unstable manifolds on perigee Poincar\'e section along with other propagated trajectories in gray, for Jacobi constants $C = 3.05$ (left), $3.00$ (right) (from \cite{rawat2025preprint})}
\end{centering}
\end{figure}

Figure \ref{fig:3121manifolds} displays stable/unstable manifolds similarly computed for the 3:1 (green/magenta respectively) and 2:1 (blue/red) unstable MMR orbits at Jacobi constants $C= 3.05$ and 3.00. Points from a grid of trajectories not belonging to the manifolds, propagated to the same $\ell=0$ section at the same $C$ values, are shown in gray. The stable/unstable manifolds surround zones of stable MMR librational tori as expected, further confirming the accuracy of the computations. Heteroclinic intersections between 3:1 unstable and 2:1 stable manifolds (and vice versa) are clearly visible, corresponding to initial conditions for fuel-free trajectories between 3:1 and 2:1 unstable resonant orbits. These heteroclinic trajectories were successfully and accurately computed using the tools of Section \ref{heteroclinicSection}; one such trajectory is shown in both rotating (left) and inertial (right) reference frames in Figure \ref{fig:heteroTraj}.
\begin{figure}
\centering
\includegraphics[width=0.495\columnwidth]{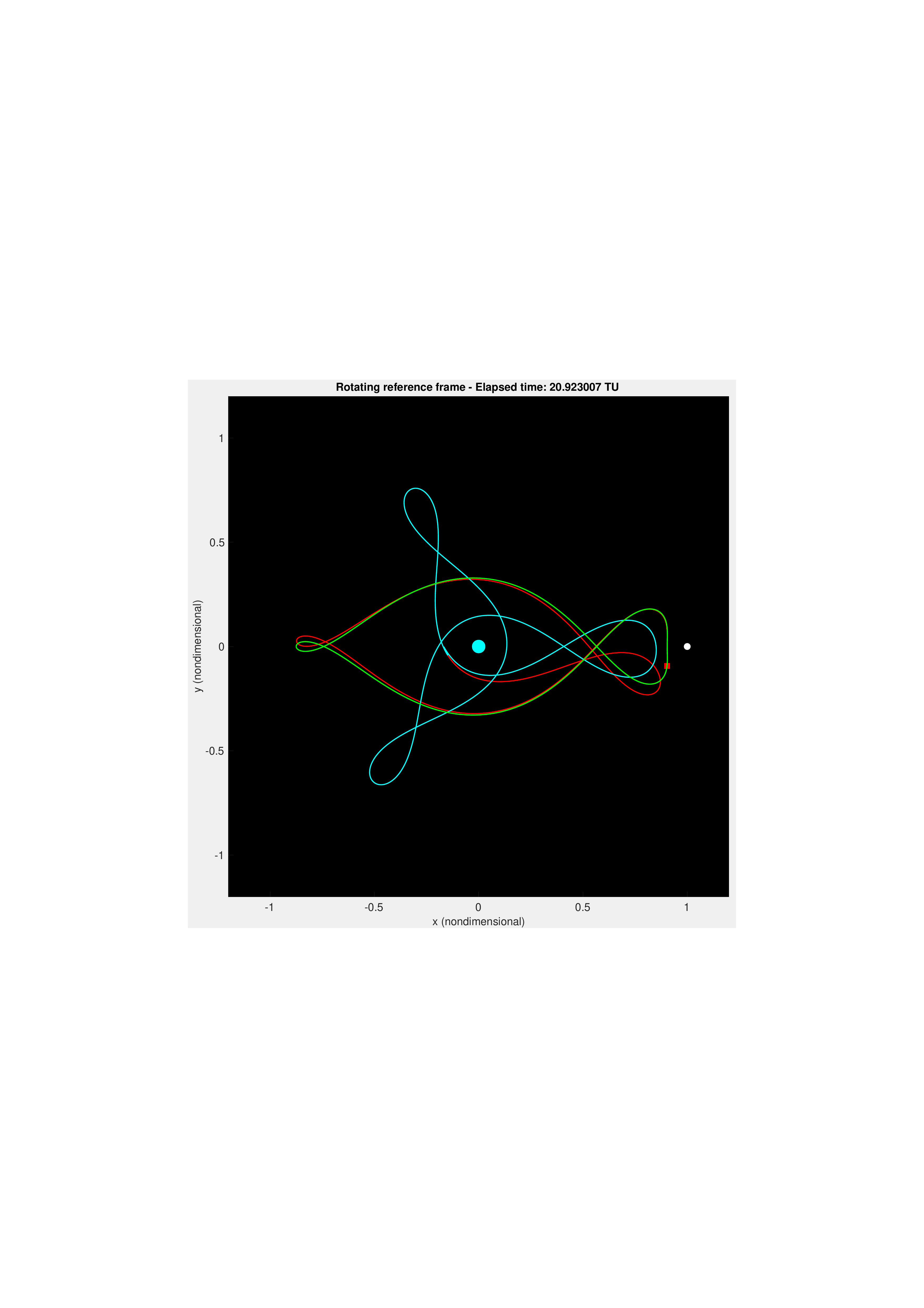}
\includegraphics[width=0.495\columnwidth]{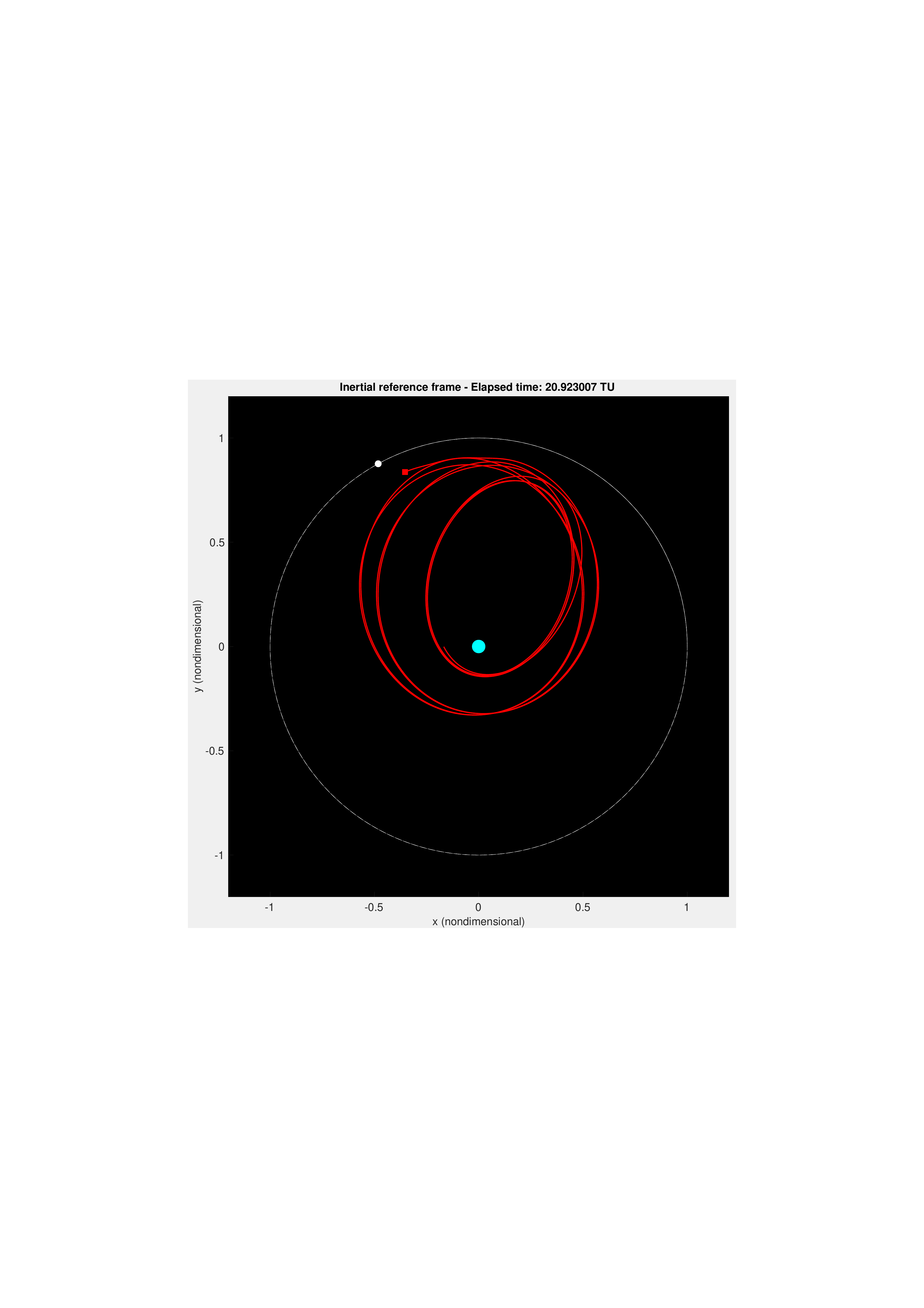}
\caption{ \label{fig:heteroTraj} 3:1 to 2:1 Lunar MMR Heteroclinic Connection Trajectory at Jacobi constant $C=3.05$, with near-3:1 and near-2:1 portions highlighted in cyan and green respectively in rotating frame plot (left). Same trajectory in inertial frame on right.}
\end{figure}
The increase of orbit ellipse semi-major axis along the trajectory is clearly seen in the inertial frame plot, indicating the utility of such trajectories for spacecraft orbit transfers. The calculation of this trajectory was enabled by  the accurate parameterizations of the 3:1 and 2:1 orbits' manifolds due to Section \ref{parambigsection}, combined with the fast heteroclinic solving algorithm of Section \ref{heteroclinicSection}. For further details on Earth-Moon MMR dynamics, the reader is referred to our papers \cite{rawat2025preprint, kumar2024iac}.

\section{Conclusions}

In this work, a parameterization method was developed for the computation of accurate Poincar\'e map stable \& unstable manifolds of periodic orbits in 2 DOF Hamiltonian systems, thus also enabling accurate computation of heteroclinic trajectories between them. The method first finds high-degree polynomial approximations of curves lying on periodic orbit stable/unstable manifolds; then, propagating points from these curves to the section yields the desired Poincar\'e map-invariant manifolds. The method avoids the need to compose Poincar\'e maps with polynomials (as the method of \cite{gonzalezJames} would require), requiring fixed-time propagations instead. The method also works in the case that the flow's periodic orbit intersects the Poincar\'e  section in multiple distinct points, unlike the previous work \cite{kumar2021journal}. It hence allows for accurate computation of stable/unstable manifolds and heteroclinic connections for a much wider variety of Poincar\'e sections, some of which may be better suited to the Hamiltonian dynamical system of interest. 

The methodology developed was successfully implemented and used for a number of studies \cite{rawat2025preprint, kumar2024aug, kumar2024iac} in the planar circular restricted 3-body problem, demonstrating its utility for practical analysis of real-world dynamical systems. The algorithms enable computation of accurate (nonlinear) stable/unstable manifold parameterizations on periapse and apoapse Poincar\'e sections that have good transversality properties to the PCRTBP flow, improving upon the linear methods and fixed Cartesian coordinate sections used in most preceding studies. Major improvements in accuracy were demonstrated as compared to linear eigenvector approximations. The calculations of stable/unstable manifolds take only a few seconds on a laptop for each resonant periodic orbit, whose manifold curves can then be plotted using just two coordinates in a 2D fixed-energy Poincar\'e section. Intersections of these manifolds can be accurately computed in just a few seconds as well, providing natural transfer trajectories between different mean motion resonances and demonstrating the usefulness of these parameterizations for space mission design and celestial dynamical analysis. 

\section*{Acknowledgements}

This work was supported in part by the National Science Foundation (NSF) under a Mathematical Sciences Postdoctoral Research Fellowship award no. DMS-2202994, and in part by the US Air Force Office of Scientific Research (AFOSR) under Award No. FA8655-24-1-7012. This research was carried out in part at the Jet Propulsion Laboratory, California Institute of Technology, under a contract with the National Aeronautics and Space Administration (80NM0018D0004).

\begin{appendices}
\section{Proof of constant $\bar \lambda_s, \bar \lambda_u$ with eigenvectors rescaled by $a_s, a_u$} \label{rescaleProof}

We prove the result for $\bold{\bar v}_{s}$; the case of $\bold{\bar v}_{u}$ can be proven in the exact same manner. 
\begin{lemma}
If $\bold{ v}_{s}(k)$, $\lambda_s(k)$ are such that $D_{\bold{x}}\Phi_{\tau(k)}(X(k)) \bold{v}_{s}(k) = \lambda_{s}(k) \bold{v}_{s}(k + 1 \mod m)$ for all $k=0, 1, \dots, m-1$, and $a_{s}(k)$,  $\bar \lambda_{s}$ satisfy Eq. \eqref{cohom_as}, then $\bold{\bar v}_{s}(k) = a_{s}(k) \bold{v}_{s}(k)$ satisfies
\begin{equation} D_{\bold{x}}\Phi_{\tau(k)}(X(k))  \bold{\bar v}_{s}(k) =  \bar \lambda_{s} \bold{\bar v}_{s}(k + 1 \mod m)\end{equation}
\end{lemma}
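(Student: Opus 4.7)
The plan is a short direct calculation. First I would rewrite the cohomological equation \eqref{cohom_as} in multiplicative rather than logarithmic form. Exponentiating both sides of
\begin{equation*}
\log(a_{s}(k)) - \log(a_{s}(k+1 \mod m)) = -[\log(\lambda_{s}(k)) - \log \bar\lambda_{s}]
\end{equation*}
yields the key identity
\begin{equation*}
a_{s}(k)\,\lambda_{s}(k) = \bar\lambda_{s}\, a_{s}(k+1 \mod m),
\end{equation*}
which is really the whole content of the rescaling step.

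Next I would just compute directly, using linearity of $D_{\bold{x}}\Phi_{\tau(k)}(X(k))$ and the eigenvector hypothesis:
\begin{equation*}
D_{\bold{x}}\Phi_{\tau(k)}(X(k))\,\bold{\bar v}_{s}(k) = a_{s}(k)\,D_{\bold{x}}\Phi_{\tau(k)}(X(k))\,\bold{v}_{s}(k) = a_{s}(k)\,\lambda_{s}(k)\,\bold{v}_{s}(k+1 \mod m).
\end{equation*}
Substituting the identity from the previous paragraph and collecting $a_{s}(k+1 \mod m)\bold{v}_{s}(k+1 \mod m)$ into $\bold{\bar v}_{s}(k+1 \mod m)$ gives the claim. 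The $\bold{\bar v}_{u}$ case is the same calculation with $u$ in place of $s$ throughout.

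There is no real obstacle; the only thing to verify is that the required exponentiation is legitimate, which is ensured by the earlier assumption (right after \eqref{condition}) that all $\lambda_{s}(k)>0$ and the fact that $\bar \lambda_{s}$ was defined as a geometric mean of positive numbers and is hence positive. The $a_{s}(k)$ produced by the scheme in Section \ref{cohomSolving} are then positive as well, so the rescaling is well defined, and the whole proof is essentially a one-line calculation once the multiplicative form of \eqref{cohom_as} is in hand.
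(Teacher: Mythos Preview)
Your proposal is correct and follows essentially the same approach as the paper's proof: exponentiate Eq.~\eqref{cohom_as} to obtain $a_{s}(k)\lambda_{s}(k) = \bar\lambda_{s}\,a_{s}(k+1 \bmod m)$, then substitute into the direct computation of $D_{\bold{x}}\Phi_{\tau(k)}(X(k))\,\bold{\bar v}_{s}(k)$. Your additional remark on positivity of the $a_s(k)$ is a nice explicit check that the paper leaves implicit.
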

\begin{proof}
Since $D_{\bold{x}}\Phi_{\tau(k)}(X(k)) \bold{v}_{s}(k) = \lambda_{s}(k) \bold{v}_{s}(k + 1 \mod m)$, we have
\begin{align} \begin{split} D_{\bold{x}}\Phi_{\tau(k)}(X(k))  \bold{\bar v}_{s}(k) &= D_{\bold{x}}\Phi_{\tau(k)}(X(k))   a_{s}(k) \bold{ v}_{s}(k)   =a_{s}(k) \lambda_{s}(k) \bold{ v}_{s}(k + 1 \mod m) \\
&= a_{s}(k + 1 \mod m)  \bar \lambda_{s} \bold{v}_{s}(k + 1 \mod m)) =  \bar \lambda_{s}  \bold{\bar v}_{s}(k + 1 \mod m) \end{split} \end{align}
where $a_{s}(k) \lambda_{s}(k) = a_{s}(k + 1 \mod m) \bar \lambda_{s} $ follows from exponentiating Eq. \eqref{cohom_as}. \qed
\end{proof}

\section{Proof of properties of $\bold{\bar v}_2(k)$} \label{sympConjProof}

Here, we show that the vectors $\bold{\bar v}_{2}(k)$ found using the procedure of Section \ref{step1} satisfy Equation \eqref{sympConjEquation}
This is proven as a result of the two lemmas below, both adapted from similar results for invariant tori \cite{kumar2022}. 

\begin{lemma} \label{sympconjLemma}
The vectors $\bold{v}_{2}(k)$ defined in Eq. \eqref{prelimSympConj} satisfy  
\begin{equation}  \label{prelimSympConjCondition}
D_{\bold{x}}\Phi_{\tau(k)}(X(k))\bold{v}_{2}(k) = T(k) \bold{\bar v}_{1}(k + 1 \mod m) + \bold{v}_{2}(k + 1 \mod m)\end{equation}
\end{lemma}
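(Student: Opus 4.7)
The plan is to prove Eq. \eqref{prelimSympConjCondition} by direct substitution, using linearity of $D_{\bold{x}}\Phi_{\tau(k)}(X(k))$ applied to the three summands that make up $\bold{v}_2(k)$ in Eq. \eqref{prelimSympConj}. For the first summand $J^{-1}\bold{\bar v}_1(k)/\|\bold{\bar v}_1(k)\|^2$, I would invoke Eq. \eqref{abcd} (with $B(k) = 1$, as stated just after that equation), which expresses its image as a linear combination of four vectors at index $k+1 \mod m$ with coefficients $T(k)$, $1$, $C(k)$, $D(k)$. For the second and third summands, I would use the rescaled eigenvector relations $D_{\bold{x}}\Phi_{\tau(k)}(X(k))\bold{\bar v}_s(k) = \bar\lambda_s \bold{\bar v}_s(k+1\mod m)$ and $D_{\bold{x}}\Phi_{\tau(k)}(X(k))\bold{\bar v}_u(k) = \bar\lambda_u \bold{\bar v}_u(k+1\mod m)$ established at the end of Section \ref{step1}.

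Collecting terms in the resulting sum, the coefficient of $\bold{\bar v}_s(k+1 \mod m)$ becomes $C(k) + \bar\lambda_s f_1(k)$ and the coefficient of $\bold{\bar v}_u(k+1\mod m)$ becomes $D(k) + \bar\lambda_u f_2(k)$. By Eqs. \eqref{cEquation} and \eqref{dEquation} these equal $f_1(k+1\mod m)$ and $f_2(k+1\mod m)$, respectively, so the sum of the last three terms in the image reassembles into exactly $\bold{v}_2(k+1\mod m)$ per the definition \eqref{prelimSympConj}. The leading $T(k)\bold{\bar v}_1(k+1\mod m)$ term that remains is precisely the term appearing on the RHS of Eq. \eqref{prelimSympConjCondition}, closing the identification.

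The only nontrivial ingredient is the identity $B(k) = 1$ tacitly used above; this I would establish as an auxiliary step by pairing both sides of Eq. \eqref{abcd} under the symplectic form $\omega$ with $\bold{\bar v}_1(k+1\mod m)$. Since $D_{\bold{x}}\Phi_{\tau(k)}(X(k))$ is symplectic and $\bold{\bar v}_1(k+1\mod m) = D_{\bold{x}}\Phi_{\tau(k)}(X(k)) \bold{\bar v}_1(k)$, the LHS pairing reduces to $\omega(J^{-1}\bold{\bar v}_1(k)/\|\bold{\bar v}_1(k)\|^2,\, \bold{\bar v}_1(k))$, and a short calculation using $J^T = -J$ with $J^{-1} = -J$ shows this equals $-1$. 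On the RHS, the $T(k)$ term pairs with $\bold{\bar v}_1(k+1\mod m)$ to zero by antisymmetry, while the $C(k)$ and $D(k)$ terms vanish because $\bold{\bar v}_1$ spans the kernel of $\omega$ restricted to $T\mathcal M_C$, which contains the stable and unstable eigenvectors. Only $-B(k)$ survives, forcing $B(k) = 1$. Beyond this brief symplectic computation, the remainder of the argument is purely mechanical: the adapted-frame construction and Eqs. \eqref{cEquation}--\eqref{dEquation} have been arranged precisely so that the bookkeeping telescopes as desired, paralleling the analogous derivation for invariant tori in \cite{kumar2022}.
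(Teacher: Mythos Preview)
Your proposal is correct and follows essentially the same route as the paper: direct substitution using linearity, Eqs.~\eqref{cEquation}--\eqref{dEquation} to collapse the stable/unstable coefficients, and a symplectic-form computation to pin down $B(k)=1$. The one substantive difference is in how you justify $\Omega(\bold{\bar v}_1,\bold{\bar v}_s)=\Omega(\bold{\bar v}_1,\bold{\bar v}_u)=0$: the paper derives this dynamically, observing that $\max_k|\Omega(\bold{\bar v}_1(k),\bold{\bar v}_s(k))|$ gets multiplied by $|\bar\lambda_s|<1$ under one application of $D\Phi_{\tau(k)}$ and hence must vanish, whereas you invoke the geometric fact that the stable/unstable eigenvectors lie in $T\mathcal M_C$ (since $W^{s,u}(\gamma)\subset\mathcal M_C$) and that $\bold{\bar v}_1$ spans the kernel of $\Omega|_{T\mathcal M_C}$ (equivalently, $\Omega(\bold{\bar v}_1,\cdot)=dH$). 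Your argument is slightly cleaner and more conceptual; the paper's has the minor advantage of not needing to appeal separately to the energy-level containment of the eigenvectors.
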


\begin{lemma} \label{constTLemma}
If $\bold{v}_{2}(k)$ and $a(k)$ satisfy Eq. \eqref{prelimSympConjCondition} and \eqref{Tkill} respectively with $\bar T = \frac{1}{m}\sum_{k=0}^{m-1} T(k) $ a constant independent of $k$, then $\bold{\bar v}_{2}(k) = \bold{v}_{2}(k) + a(k) \bold{\bar v}_{1}(k)$ will satisfy Equation \eqref{sympConjEquation}:
\begin{equation}  D_{\bold{x}}\Phi_{\tau(k)}(X(k))  \bold{\bar v}_{2}(k) =  \bar T \bold{\bar v}_{1}(k + 1 \mod m)  + \bold{\bar v}_{2}(k + 1 \mod m)   \end{equation}
\end{lemma}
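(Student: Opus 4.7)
The plan is a direct computational verification using the known linearized flow actions on $\bold{\bar v}_{1}(k)$ and $\bold{v}_{2}(k)$, together with the cohomological identity \eqref{Tkill}. I expect no genuine obstacle, since once the two ingredients are combined and the common prefactor in front of $\bold{\bar v}_{1}(k+1 \bmod m)$ is isolated, Eq.~\eqref{Tkill} is precisely what is needed to match terms.

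First I would expand using linearity of $D_{\bold{x}}\Phi_{\tau(k)}(X(k))$:
\begin{equation*}
D_{\bold{x}}\Phi_{\tau(k)}(X(k))\bold{\bar v}_{2}(k) = D_{\bold{x}}\Phi_{\tau(k)}(X(k))\bold{v}_{2}(k) + a(k)\, D_{\bold{x}}\Phi_{\tau(k)}(X(k))\bold{\bar v}_{1}(k).
\end{equation*}
Next, I would invoke Lemma~\ref{sympconjLemma} for the first term and the identity $D_{\bold{x}}\Phi_{\tau(k)}(X(k))\bold{\bar v}_{1}(k)=\bold{\bar v}_{1}(k+1 \bmod m)$ (established in Section~\ref{step1} from the definition of $\bold{\bar v}_{1}$ as the flow vector and Eq.~\eqref{poInvariance}) for the second term, yielding
\begin{equation*}
D_{\bold{x}}\Phi_{\tau(k)}(X(k))\bold{\bar v}_{2}(k) = \bigl[T(k)+a(k)\bigr]\bold{\bar v}_{1}(k+1 \bmod m) + \bold{v}_{2}(k+1 \bmod m).
\end{equation*}

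Then I would rewrite the target right-hand side of \eqref{sympConjEquation} using the definition $\bold{\bar v}_{2}(k+1 \bmod m)=\bold{v}_{2}(k+1 \bmod m)+a(k+1 \bmod m)\bold{\bar v}_{1}(k+1 \bmod m)$, obtaining
\begin{equation*}
\bar T \bold{\bar v}_{1}(k+1 \bmod m)+\bold{\bar v}_{2}(k+1 \bmod m) = \bigl[\bar T + a(k+1 \bmod m)\bigr]\bold{\bar v}_{1}(k+1 \bmod m) + \bold{v}_{2}(k+1 \bmod m).
\end{equation*}
Matching the two expressions term by term reduces the claim to the scalar identity $T(k)+a(k)=\bar T + a(k+1 \bmod m)$, which is exactly a rearrangement of Eq.~\eqref{Tkill}. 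This completes the verification; the only subtle point worth noting is that \eqref{Tkill} is solvable for all $k=0,\dots,m-1$ (including the wrap-around at $k=m-1$) precisely because $\bar T$ was defined as the mean of $T(k)$, so that $\sum_{k}(T(k)-\bar T)=0$, matching the solvability condition from Section~\ref{cohomSolving}.
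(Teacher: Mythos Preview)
Your proposal is correct and follows essentially the same approach as the paper's proof: both expand $D_{\bold{x}}\Phi_{\tau(k)}(X(k))\bold{\bar v}_{2}(k)$ by linearity, apply Eq.~\eqref{prelimSympConjCondition} and the flow-vector identity $D_{\bold{x}}\Phi_{\tau(k)}(X(k))\bold{\bar v}_{1}(k)=\bold{\bar v}_{1}(k+1 \bmod m)$, and then reduce to the scalar relation $T(k)+a(k)=\bar T+a(k+1 \bmod m)$, which is Eq.~\eqref{Tkill}. The only cosmetic difference is that the paper proceeds linearly from LHS to RHS, whereas you expand both sides and match terms.
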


\begin{proof}[Proof of Lemma \ref{sympconjLemma}]
Combining Eq. \eqref{prelimSympConj} with Eq. \eqref{abcd} and the known relations $D_{\bold{x}}\Phi_{\tau(k)}(X(k)) \bold{\bar v}_{s}(k) = \bar \lambda_s \bold{\bar v}_{s}(k+1 \mod m) $ and $D_{\bold{x}}\Phi_{\tau(k)}(X(k)) \bold{\bar v}_{u}(k) = \bar \lambda_u \bold{\bar v}_{u}(k+1 \mod m) $, we have 
\begin{align} \begin{split} D_{\bold{x}}\Phi_{\tau(k)}(X(k))&\bold{v}_{2}(k) = D_{\bold{x}}\Phi_{\tau(k)}(X(k))\left( \frac{J^{-1} \bold{\bar v}_{1}(k)}{ \|\bold{\bar v}_{1}(k)\|^{2}} + f_{1}(k) \bold{\bar v}_{s}(k)  + f_{2}(k) \bold{\bar v}_{u}(k) \right) \\
= &T(k) \bold{\bar v}_{1}(k + 1 \mod m) + B(k) \frac{J^{-1} \bold{\bar v}_{1}(k + 1 \mod m)}{ \|\bold{\bar v}_{1}(k + 1 \mod m)\|^{2}} \\ 
&+ \left(C(k) + \bar \lambda_{s}f_{1}(k) \right) \bold{\bar v}_{s}(k + 1 \mod m) + \left( D(k) + \bar \lambda_{u}f_{2}(k) \right)\bold{\bar v}_{u}(k + 1 \mod m) 
\end{split} \end{align} 
Recalling Equations \eqref{cEquation} and \eqref{dEquation} defining $f_1$ and $f_2$, we thus have that
\begin{align} \begin{split} \label{DFtimesvc} D_{\bold{x}}\Phi_{\tau(k)}(X(k))\bold{v}_{2}(k) = &T(k) \bold{\bar v}_{1}(k + 1 \mod m) + B(k) \frac{J^{-1} \bold{\bar v}_{1}(k + 1 \mod m)}{ \|\bold{\bar v}_{1}(k + 1 \mod m)\|^{2}} \\ 
&+ f_{1}(k + 1 \mod m)  \bold{\bar v}_{s}(k + 1 \mod m) + f_{2}(k + 1 \mod m) \bold{\bar v}_{u}(k + 1 \mod m) 
\end{split} \end{align} 
Denote the bilinear symplectic form $\Omega$ on Euclidean $\mathbb{R}^{4}$ as $ \Omega(\bold{v}_{1}, \bold{v}_{2}) =  \bold{v}_{1}^{T} J \bold{v}_{2} $; it is easy to see that $\Omega(\bold{v}_{1}, \bold{v}_{1}) = 0$ for any $\bold{v}_{1} \in \mathbb{R}^{4}$. As flow maps of Hamiltonian systems are symplectic \cite{thirring}, we have that $\Omega(\bold{v}_{1}, \bold{v}_{2}) = \Omega(D_{\bold{x}}\Phi_{\tau(k)}(X(k))\bold{v}_{1}, D_{\bold{x}}\Phi_{\tau(k)}(X(k))\bold{v}_{2})$ for all $\bold{v}_{1}$, $\bold{v}_{2} \in \mathbb{R}^{4}$ and $k=0, 1, \dots, m-1$.  Thus, 
\begin{align} \begin{split} \label{deriveZero} \max_{k} |\Omega(\bold{\bar v}_{1}&(k),\bold{\bar v}_{s}(k)) |= \max_{k} \left| \Omega \left(D_{\bold{x}}\Phi_{\tau(k)}(X(k))\bold{\bar v}_{1}(k), D_{\bold{x}}\Phi_{\tau(k)}(X(k)) \bold{\bar v}_{s}(k) \right) \right| \\
&=\max_{k}  \left| \Omega \left(\bold{\bar v}_{1}(k + 1 \mod m), \bar \lambda_s \bold{\bar v}_{s}(k + 1 \mod m) \right) \right| \\
&= |\bar \lambda_s|  \max_{k}  \left| \Omega \left(\bold{\bar v}_{1}(k + 1 \mod m), \bold{\bar v}_{s}(k + 1 \mod m) \right) \right| = |\bar \lambda_s|  \max_{k} \left| \Omega \left(\bold{\bar v}_{1}(k), \bold{\bar v}_{s}(k) \right) \right| \\
\end{split} \end{align} 
which implies that $\max_{k} \left| \Omega \left(\bold{\bar v}_{1}(k), \bold{\bar v}_{s}(k) \right) \right| = 0$ since $0 < |\bar \lambda_s| < 1$. Thus, for all $k=0, 1, \dots, m-1$, $\Omega \left(\bold{\bar v}_{1}(k), \bold{\bar v}_{s}(k) \right) =0$ . We can also show that all $\Omega \left(\bold{\bar v}_{1}(k), \bold{\bar v}_{u}(k) \right) = 0$ in a very similar manner. Hence, 
\begin{align} \label{areaOne} \begin{split} \Omega(\bold{\bar v}_{1}(k),\bold{v}_{2}(k)) &= \Omega \left(\bold{\bar v}_{1}(k),\frac{J^{-1} \bold{\bar v}_{1}(k)}{ \|\bold{\bar v}_{1}(k)\|^{2}} + f_{1}(k) \bold{\bar v}_{s}(k)  + f_{2}(k) \bold{\bar v}_{u}(k) \right) \\
&= \Omega \left(\bold{\bar v}_{1}(k),\frac{J^{-1} \bold{\bar v}_{1}(k)}{ \|\bold{\bar v}_{1}(k)\|^{2}} \right) \\
&= \bold{\bar v}_{1}(k)^{T}J \frac{J^{-1} \bold{\bar v}_{1}(k)}{ \|\bold{\bar v}_{1}(k)\|^{2}}  = \frac{ \bold{\bar v}_{1}(k)^{T} \bold{\bar v}_{1}(k)}{ \|\bold{\bar v}_{1}(k)\|^{2}} = 1
\end{split} \end{align} 
Since $D_{\bold{x}}\Phi_{\tau(k)}(X(k))$ are symplectic matrices, using Eq. \eqref{DFtimesvc} we have that 
\begin{align} \begin{split} \label{BisOne} 1 = \Omega &(\bold{\bar v}_{1}(k),\bold{v}_{2}(k)) \\
= \Omega &\left(D_{\bold{x}}\Phi_{\tau(k)}(X(k))\bold{\bar v}_{1}(k),D_{\bold{x}}\Phi_{\tau(k)}(X(k))\bold{v}_{2}(k) \right) \\
= \Omega &\left(\bold{\bar v}_{1}(k + 1 \mod m),T(k) \bold{\bar v}_{1}(k + 1 \mod m) + B(k) \frac{J^{-1} \bold{\bar v}_{1}(k + 1 \mod m)}{ \|\bold{\bar v}_{1}(k + 1 \mod m)\|^{2}} \right. \\
& \quad \quad \quad \quad \quad \quad \left. + f_{1}(k + 1 \mod m)  \bold{\bar v}_{s}(k + 1 \mod m) + f_{2}(k + 1 \mod m) \bold{\bar v}_{u}(k + 1 \mod m) \vphantom{\frac{J^{-1} \bold{\bar v}_{1}(k + 1 \mod m)}{ \|\bold{\bar v}_{1}(k + 1 \mod m)\|^{2}}} \right) \\
= \Omega &\left(\bold{\bar v}_{1}(k + 1 \mod m), B(k) \frac{J^{-1} \bold{\bar v}_{1}(k + 1 \mod m)}{ \|\bold{\bar v}_{1}(k + 1 \mod m)\|^{2}} \right) \\
=B(&k) \bold{\bar v}_{1}(k + 1 \mod m)^{T} J  \frac{J^{-1} \bold{\bar v}_{1}(k + 1 \mod m)}{ \|\bold{\bar v}_{1}(k + 1 \mod m)\|^{2}} = B(k)
\end{split} \end{align} 
proving that $B(k) =1$ for all $k=0, 1, \dots, m-1$. Therefore, substituting this into Eq. \eqref{DFtimesvc} gives
\begin{align} \begin{split} \label{subInto} D_{\bold{x}}\Phi_{\tau(k)}(X(k))\bold{v}_{2}(k) = &T(k) \bold{\bar v}_{1}(k + 1 \mod m) +  \frac{J^{-1} \bold{\bar v}_{1}(k + 1 \mod m)}{ \|\bold{\bar v}_{1}(k + 1 \mod m)\|^{2}} \\ 
&+ f_{1}(k + 1 \mod m)  \bold{\bar v}_{s}(k + 1 \mod m) + f_{2}(k + 1 \mod m) \bold{\bar v}_{u}(k + 1 \mod m) 
\end{split} \end{align} 
Finally, we see from Eq. \eqref{prelimSympConj} that the last 3 terms on the RHS of Eq. \eqref{subInto} are just $\bold{v}_{c}(k + 1 \mod m)$, so
\begin{equation}  D_{\bold{x}}\Phi_{\tau(k)}(X(k))\bold{v}_{2}(k) = T(k) \bold{\bar v}_{1}(k + 1 \mod m) + \bold{v}_{2}(k + 1 \mod m) \end{equation} 
which is what we sought to prove. 
\end{proof}

\begin{proof} [Proof of Lemma \ref{constTLemma}]
Since $\bold{v}_{2}(k)$ satisfies Eq. \eqref{prelimSympConjCondition} and $D_{\bold{x}}\Phi_{\tau(k)}(X(k)) \bold{\bar v}_{1}(k) = \bold{\bar v}_{1}(k + 1 \mod m)$, we have
\begin{align} \begin{split} D_{\bold{x}}\Phi_{\tau(k)}(X(k)) \bold{\bar v}_{2}(k) &= D_{\bold{x}}\Phi_{\tau(k)}(X(k)) \left[ \bold{v}_{2}(k) + a(k) \bold{\bar v}_{1}(k) \right] \\
&= \left[T(k)+a(k) \right] \bold{\bar v}_{1}(k + 1 \mod m)+\bold{v}_{2}(k + 1 \mod m) \\
&= \left[\bar T+a(k + 1 \mod m) \right] \bold{\bar v}_{1}(k + 1 \mod m)+\bold{v}_{2}(k + 1 \mod m) \\
&= \bar T \bold{\bar v}_{1}(k + 1 \mod m)+\bold{\bar v}_{2}(k + 1 \mod m) \end{split} \end{align}
where the relation $T(k)+a(k) = \bar T+a(k + 1 \mod m)$ follows from Eq. \eqref{Tkill}. 
\end{proof}

\section{Proof that $A,B$ are contraction maps in $\ell^\infty$ norm} \label{contractionProof}

We prove that $A$ is a contraction if $0<\alpha<1$; the same can be shown for $B$ and $\alpha > 1$ very similarly. 
\begin{lemma}
If $0<\alpha <1$, $A$ is a contraction map. Hence, in this case the iteration $u_{n+1} = A(u_{n})$ uniformly converges exponentially fast as $n \rightarrow \infty$ to the solution $u$ of Equation \eqref{u1contract} (and thus also \eqref{cohomHyp}). 
\end{lemma}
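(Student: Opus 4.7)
The plan is to apply the definition of $A$ directly and bound the $\ell^\infty$ distance between images of two arbitrary sequences. Recall from Equation \eqref{u1contract} that for any finite sequence $u = (u(0), \dots, u(m-1))$, the map $A$ is given componentwise by $[A(u)](k) = \alpha u(k-1 \mod m) - b(k-1 \mod m)$ for $k = 0, \dots, m-1$. Equip the space of such sequences with the standard $\ell^\infty$ norm $\|u\|_\infty = \max_{0 \leq k \leq m-1} |u(k)|$, under which this space is a complete metric space (being finite-dimensional).

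First I would take two arbitrary sequences $u, v$ and compute the difference $[A(u)](k) - [A(v)](k) = \alpha \bigl(u(k-1 \mod m) - v(k-1 \mod m)\bigr)$, noting that the $b$-terms cancel. Taking absolute values and then the maximum over $k$ gives
\begin{equation}
\|A(u) - A(v)\|_\infty = \alpha \max_{0 \leq k \leq m-1} |u(k-1 \mod m) - v(k-1 \mod m)| = \alpha \|u - v\|_\infty.
\end{equation}
Since $0 < \alpha < 1$, this is the definition of a contraction mapping with contraction constant $\alpha$.

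The conclusion then follows from the Banach contraction mapping theorem \cite{chicone2006}: $A$ has a unique fixed point $u$, and for any initial sequence $u_0$, the iterates $u_{n+1} = A(u_n)$ satisfy $\|u_n - u\|_\infty \leq \alpha^n \|u_0 - u\|_\infty$, giving uniform exponential convergence as $n \to \infty$. By construction of $A$ via Equation \eqref{u1contract}, the fixed point $u$ solves Equation \eqref{cohomHyp}. The analogous argument for $B$ when $\alpha > 1$ proceeds identically, yielding a contraction constant of $\alpha^{-1} < 1$ from the factor of $\alpha^{-1}$ in Equation \eqref{u2contract}. There is no real obstacle here; the only step requiring any care is verifying that the $b$-dependent terms genuinely cancel in the difference $A(u) - A(v)$, which is immediate because $A$ acts affinely with linear part scalar multiplication by $\alpha$ composed with the cyclic shift (an isometry in $\ell^\infty$).
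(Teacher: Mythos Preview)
Your proposal is correct and follows essentially the same approach as the paper's own proof: both compute the $\ell^\infty$ distance between $A(u)$ and $A(v)$ directly from the definition, observe the $b$-terms cancel, obtain the contraction constant $\alpha$, and invoke the contraction mapping theorem. Your write-up is slightly more explicit (noting completeness of the finite-dimensional space and giving the bound $\|u_n - u\|_\infty \leq \alpha^n \|u_0 - u\|_\infty$), but the argument is the same.
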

\begin{proof}
Let $u_{1},u_{2}$ be finite sequences indexed by $k=0, 1, \dots, m-1$. Then, 
\begin{align} \begin{split}  \max_{k}  \| [A(&u_1)](k) - [A(u_2)](k) \| \\
&=  \max_{k} \| \alpha u_1(k-1 \mod m) -  \alpha u_2(k-1 \mod m)  \|  \\
&=  \alpha \max_{k} \| u_1(k-1 \mod m) - u_2(k-1 \mod m)  \| = \alpha \max_{k} \| u_1(k) - u_2(k) \| 
\end{split} \end{align} 
As $0<\alpha<1$, $A$ is a contraction map under the $\ell^\infty$ norm. The contraction mapping theorem \citep{chicone2006} tells us that every such map has a unique fixed point; furthermore, the fixed point can be found by iterating any value in the domain of the map forwards until convergence. The solution of Equations \eqref{u1contract}  is by definition the fixed point of contraction map $A$. Hence, the iteration converges to $u$. 
\end{proof}

\end{appendices}

\bibliography{bibfile}


\begin{thebibliography}{29}
\ifx \bisbn   \undefined \def \bisbn  #1{ISBN #1}\fi
\ifx \binits  \undefined \def \binits#1{#1}\fi
\ifx \bauthor  \undefined \def \bauthor#1{#1}\fi
\ifx \batitle  \undefined \def \batitle#1{#1}\fi
\ifx \bjtitle  \undefined \def \bjtitle#1{#1}\fi
\ifx \bvolume  \undefined \def \bvolume#1{\textbf{#1}}\fi
\ifx \byear  \undefined \def \byear#1{#1}\fi
\ifx \bissue  \undefined \def \bissue#1{#1}\fi
\ifx \bfpage  \undefined \def \bfpage#1{#1}\fi
\ifx \blpage  \undefined \def \blpage #1{#1}\fi
\ifx \burl  \undefined \def \burl#1{\textsf{#1}}\fi
\ifx \doiurl  \undefined \def \doiurl#1{\url{https://doi.org/#1}}\fi
\ifx \betal  \undefined \def \betal{\textit{et al.}}\fi
\ifx \binstitute  \undefined \def \binstitute#1{#1}\fi
\ifx \binstitutionaled  \undefined \def \binstitutionaled#1{#1}\fi
\ifx \bctitle  \undefined \def \bctitle#1{#1}\fi
\ifx \beditor  \undefined \def \beditor#1{#1}\fi
\ifx \bpublisher  \undefined \def \bpublisher#1{#1}\fi
\ifx \bbtitle  \undefined \def \bbtitle#1{#1}\fi
\ifx \bedition  \undefined \def \bedition#1{#1}\fi
\ifx \bseriesno  \undefined \def \bseriesno#1{#1}\fi
\ifx \blocation  \undefined \def \blocation#1{#1}\fi
\ifx \bsertitle  \undefined \def \bsertitle#1{#1}\fi
\ifx \bsnm \undefined \def \bsnm#1{#1}\fi
\ifx \bsuffix \undefined \def \bsuffix#1{#1}\fi
\ifx \bparticle \undefined \def \bparticle#1{#1}\fi
\ifx \barticle \undefined \def \barticle#1{#1}\fi
\bibcommenthead
\ifx \bconfdate \undefined \def \bconfdate #1{#1}\fi
\ifx \botherref \undefined \def \botherref #1{#1}\fi
\ifx \url \undefined \def \url#1{\textsf{#1}}\fi
\ifx \bchapter \undefined \def \bchapter#1{#1}\fi
\ifx \bbook \undefined \def \bbook#1{#1}\fi
\ifx \bcomment \undefined \def \bcomment#1{#1}\fi
\ifx \oauthor \undefined \def \oauthor#1{#1}\fi
\ifx \citeauthoryear \undefined \def \citeauthoryear#1{#1}\fi
\ifx \endbibitem  \undefined \def \endbibitem {}\fi
\ifx \bconflocation  \undefined \def \bconflocation#1{#1}\fi
\ifx \arxivurl  \undefined \def \arxivurl#1{\textsf{#1}}\fi
\csname PreBibitemsHook\endcsname

\bibitem[\protect\citeauthoryear{Mireles~James and Murray}{2017}]{chebTaylor}
\begin{barticle}
\bauthor{\bsnm{Mireles~James}, \binits{J.D.}},
\bauthor{\bsnm{Murray}, \binits{M.}}:
\batitle{Chebyshev-{T}aylor parameterization of stable/unstable manifolds for
  periodic orbits: Implementation and applications}.
\bjtitle{International Journal of Bifurcation and Chaos}
\bvolume{27}(\bissue{14}),
\bfpage{1730050}
(\byear{2017})
\doiurl{10.1142/S0218127417300506}
{\href{https://arxiv.org/abs/https://doi.org/10.1142/S0218127417300506}{{https://doi.org/10.1142/S0218127417300506}}}
\end{barticle}
\endbibitem

\bibitem[\protect\citeauthoryear{Cabr{\'e} et~al.}{2005}]{CabreFontichLlave}
\begin{barticle}
\bauthor{\bsnm{Cabr{\'e}}, \binits{X.}},
\bauthor{\bsnm{Fontich}, \binits{E.}},
\bauthor{\bsnm{Llave}, \binits{R.}}:
\batitle{The parameterization method for invariant manifolds iii: overview and
  applications}.
\bjtitle{Journal of Differential Equations}
\bvolume{218}(\bissue{2}),
\bfpage{444}--\blpage{515}
(\byear{2005})
\doiurl{10.1016/j.jde.2004.12.003}
\end{barticle}
\endbibitem

\bibitem[\protect\citeauthoryear{Haro et~al.}{2016}]{haroetal}
\begin{bbook}
\bauthor{\bsnm{Haro}, \binits{{\`A}.}},
\bauthor{\bsnm{Canadell}, \binits{M.}},
\bauthor{\bsnm{Figueras}, \binits{J.L.}},
\bauthor{\bsnm{Luque}, \binits{A.}},
\bauthor{\bsnm{Mondelo}, \binits{J.M.}}:
\bbtitle{The Parameterization Method for Invariant Manifolds: From Rigorous
  Results to Effective Computations}.
\bsertitle{Applied Mathematical Sciences},
vol. \bseriesno{195}.
\bpublisher{Springer}, \blocation{???}
(\byear{2016})
\end{bbook}
\endbibitem

\bibitem[\protect\citeauthoryear{Anderson and Lo}{2010}]{Anderson2010}
\begin{barticle}
\bauthor{\bsnm{Anderson}, \binits{R.L.}},
\bauthor{\bsnm{Lo}, \binits{M.W.}}:
\batitle{Dynamical systems analysis of planetary flybys and approach: Planar
  {Europa} orbiter}.
\bjtitle{Journal of Guidance, Control, and Dynamics}
\bvolume{33}(\bissue{6}),
\bfpage{1899}--\blpage{1912}
(\byear{2010})
\doiurl{10.2514/1.45060}
{\href{https://arxiv.org/abs/https://doi.org/10.2514/1.45060}{{https://doi.org/10.2514/1.45060}}}
\end{barticle}
\endbibitem

\bibitem[\protect\citeauthoryear{Anderson and Lo}{2011}]{Anderson2011}
\begin{botherref}
\oauthor{\bsnm{Anderson}, \binits{R.}},
\oauthor{\bsnm{Lo}, \binits{M.}}:
A dynamical systems analysis of resonant flybys: Ballistic case.
The Journal of the Astronautical Sciences
\textbf{58}
(2011)
\doiurl{10.1007/BF03321164}
\end{botherref}
\endbibitem

\bibitem[\protect\citeauthoryear{Vaquero and Howell}{2014}]{vaqueroHowell}
\begin{barticle}
\bauthor{\bsnm{Vaquero}, \binits{M.}},
\bauthor{\bsnm{Howell}, \binits{K.C.}}:
\batitle{Leveraging resonant-orbit manifolds to design transfers between
  libration-point orbits}.
\bjtitle{Journal of Guidance, Control, and Dynamics}
\bvolume{37}(\bissue{4}),
\bfpage{1143}--\blpage{1157}
(\byear{2014})
\doiurl{10.2514/1.62230}
{\href{https://arxiv.org/abs/https://doi.org/10.2514/1.62230}{{https://doi.org/10.2514/1.62230}}}
\end{barticle}
\endbibitem

\bibitem[\protect\citeauthoryear{Kumar et~al.}{2021}]{kumar2021journal}
\begin{barticle}
\bauthor{\bsnm{Kumar}, \binits{B.}},
\bauthor{\bsnm{Anderson}, \binits{R.L.}},
\bauthor{\bsnm{{de la Llave}}, \binits{R.}}:
\batitle{High-order resonant orbit manifold expansions for mission design in
  the planar circular restricted 3-body problem}.
\bjtitle{Communications in Nonlinear Science and Numerical Simulation}
\bvolume{97},
\bfpage{105691}
(\byear{2021})
\doiurl{10.1016/j.cnsns.2021.105691}
\end{barticle}
\endbibitem

\bibitem[\protect\citeauthoryear{Koon et~al.}{2011}]{KoLoMaRo}
\begin{bbook}
\bauthor{\bsnm{Koon}, \binits{W.S.}},
\bauthor{\bsnm{Lo}, \binits{M.W.}},
\bauthor{\bsnm{Marsden}, \binits{J.E.}},
\bauthor{\bsnm{Ross}, \binits{S.D.}}:
\bbtitle{Dynamical Systems, the Three-body Problem and Space Mission Design}.
\bpublisher{Marsden Books}, \blocation{???}
(\byear{2011})
\end{bbook}
\endbibitem

\bibitem[\protect\citeauthoryear{Gonzalez and
  Mireles~James}{2017}]{gonzalezJames}
\begin{barticle}
\bauthor{\bsnm{Gonzalez}, \binits{J.L.}},
\bauthor{\bsnm{Mireles~James}, \binits{J.D.}}:
\batitle{High-order parameterization of stable/unstable manifolds for long
  periodic orbits of maps}.
\bjtitle{SIAM Journal on Applied Dynamical Systems}
\bvolume{16}(\bissue{3}),
\bfpage{1748}--\blpage{1795}
(\byear{2017})
\doiurl{10.1137/16M1090041}
{\href{https://arxiv.org/abs/https://doi.org/10.1137/16M1090041}{{https://doi.org/10.1137/16M1090041}}}
\end{barticle}
\endbibitem

\bibitem[\protect\citeauthoryear{P{\'e}rez~Palau}{2015}]{perezthesis}
\begin{botherref}
\oauthor{\bsnm{P{\'e}rez~Palau}, \binits{D.}}:
Dynamical transport mechanisms in celestial mechanics and astrodynamics
  problems.
PhD thesis,
Universitat de Barcelona
(2015)
\end{botherref}
\endbibitem

\bibitem[\protect\citeauthoryear{Gimeno et~al.}{2023}]{gimeno2023}
\begin{barticle}
\bauthor{\bsnm{Gimeno}, \binits{J.}},
\bauthor{\bsnm{Jorba}, \binits{{\`A}.}},
\bauthor{\bsnm{Jorba-Cusc{\'o}}, \binits{M.}},
\bauthor{\bsnm{Miguel}, \binits{N.}},
\bauthor{\bsnm{Zou}, \binits{M.}}:
\batitle{Numerical integration of high-order variational equations of odes}.
\bjtitle{Applied Mathematics and Computation}
\bvolume{442},
\bfpage{127743}
(\byear{2023})
\doiurl{10.1016/j.amc.2022.127743}
\end{barticle}
\endbibitem

\bibitem[\protect\citeauthoryear{Rawat et~al.}{2025}]{rawat2025preprint}
\begin{botherref}
\oauthor{\bsnm{Rawat}, \binits{A.}},
\oauthor{\bsnm{Kumar}, \binits{B.}},
\oauthor{\bsnm{Rosengren}, \binits{A.J.}},
\oauthor{\bsnm{Ross}, \binits{S.D.}}:
Cislunar Mean-Motion Resonances: Definitions, Widths, and Comparisons with
  Resonant Satellites
(2025).
\url{https://arxiv.org/abs/2505.10138}
\end{botherref}
\endbibitem

\bibitem[\protect\citeauthoryear{Kumar and Anderson}{2024}]{kumar2024aug}
\begin{bchapter}
\bauthor{\bsnm{Kumar}, \binits{B.}},
\bauthor{\bsnm{Anderson}, \binits{R.L.}}:
\bctitle{A Survey of {Oberon} Mean Motion Resonant Unstable Orbit Properties
  and Connections for {U}ranian Tours}.
In: \bbtitle{AAS/AIAA Astrodynamics Specialist Conference}
(\byear{2024})
\end{bchapter}
\endbibitem

\bibitem[\protect\citeauthoryear{Kumar et~al.}{2024}]{kumar2024iac}
\begin{bchapter}
\bauthor{\bsnm{Kumar}, \binits{B.}},
\bauthor{\bsnm{Rawat}, \binits{A.}},
\bauthor{\bsnm{Rosengren}, \binits{A.J.}},
\bauthor{\bsnm{Ross}, \binits{S.D.}}:
\bctitle{Investigation of interior mean motion resonances and heteroclinic
  connections in the {Earth-Moon} system}.
In: \bbtitle{75rd International Astronautical Congress, Paris, France}
(\byear{2024})
\end{bchapter}
\endbibitem

\bibitem[\protect\citeauthoryear{Kumar et~al.}{2022}]{kumar2022}
\begin{barticle}
\bauthor{\bsnm{Kumar}, \binits{B.}},
\bauthor{\bsnm{Anderson}, \binits{R.L.}},
\bauthor{\bsnm{Llave}, \binits{R.}}:
\batitle{Rapid and accurate methods for computing whiskered tori and their
  manifolds in periodically perturbed planar circular restricted 3-body
  problems}.
\bjtitle{Celestial Mechanics and Dynamical Astronomy}
\bvolume{134}(\bissue{1}),
\bfpage{3}
(\byear{2022})
\doiurl{10.1007/s10569-021-10057-1}
\end{barticle}
\endbibitem

\bibitem[\protect\citeauthoryear{Celletti}{2010}]{celletti}
\begin{bbook}
\bauthor{\bsnm{Celletti}, \binits{A.}}:
\bbtitle{Stability and Chaos in Celestial Mechanics}.
\bpublisher{Springer}, \blocation{???}
(\byear{2010}).
\doiurl{10.1007/978-3-540-85146-2}
\end{bbook}
\endbibitem

\bibitem[\protect\citeauthoryear{Bate et~al.}{1971}]{bmw}
\begin{bbook}
\bauthor{\bsnm{Bate}, \binits{R.R.}},
\bauthor{\bsnm{Mueller}, \binits{D.D.}},
\bauthor{\bsnm{White}, \binits{J.E.}}:
\bbtitle{{Fundamentals} of {Astrodynamics}}.
\bpublisher{Dover Publications},
\blocation{New York}
(\byear{1971})
\end{bbook}
\endbibitem

\bibitem[\protect\citeauthoryear{{Morbidelli}}{2002}]{morbyBook}
\begin{bbook}
\bauthor{\bsnm{{Morbidelli}}, \binits{A.}}:
\bbtitle{{Modern Celestial Mechanics : Aspects of Solar System Dynamics}}.
\bpublisher{London: Taylor \& Francis}, \blocation{???}
(\byear{2002})
\end{bbook}
\endbibitem

\bibitem[\protect\citeauthoryear{Chirikov}{1960}]{chirikov1960}
\begin{barticle}
\bauthor{\bsnm{Chirikov}, \binits{B.V.}}:
\batitle{Resonance processes in magnetic traps}.
\bjtitle{The Soviet Journal of Atomic Energy}
\bvolume{6}(\bissue{6}),
\bfpage{464}--\blpage{470}
(\byear{1960})
\doiurl{10.1007/BF01483352}
\end{barticle}
\endbibitem

\bibitem[\protect\citeauthoryear{Chirikov}{1979}]{chirikov1979}
\begin{barticle}
\bauthor{\bsnm{Chirikov}, \binits{B.V.}}:
\batitle{A universal instability of many-dimensional oscillator systems}.
\bjtitle{Phys. Rep.}
\bvolume{52}(\bissue{5}),
\bfpage{264}--\blpage{379}
(\byear{1979})
\doiurl{10.1016/0370-1573(79)90023-1}
\end{barticle}
\endbibitem

\bibitem[\protect\citeauthoryear{Chicone}{2006}]{chicone2006}
\begin{bbook}
\bauthor{\bsnm{Chicone}, \binits{C.}}:
\bbtitle{Ordinary Differential Equations with Applications}
vol. \bseriesno{34}.
\bpublisher{Springer}, \blocation{???}
(\byear{2006})
\end{bbook}
\endbibitem

\bibitem[\protect\citeauthoryear{P{\'e}rez-Palau et~al.}{2015}]{perezpalau2015}
\begin{barticle}
\bauthor{\bsnm{P{\'e}rez-Palau}, \binits{D.}},
\bauthor{\bsnm{Masdemont}, \binits{J.J.}},
\bauthor{\bsnm{G{\'o}mez}, \binits{G.}}:
\batitle{Tools to detect structures in dynamical systems using jet transport}.
\bjtitle{Celestial Mechanics and Dynamical Astronomy}
\bvolume{123}(\bissue{3}),
\bfpage{239}--\blpage{262}
(\byear{2015})
\doiurl{10.1007/s10569-015-9634-3}
\end{barticle}
\endbibitem

\bibitem[\protect\citeauthoryear{Rasotto et~al.}{2016}]{dast}
\begin{bchapter}
\bauthor{\bsnm{Rasotto}, \binits{M.}},
\bauthor{\bsnm{Morselli}, \binits{A.}},
\bauthor{\bsnm{Wittig}, \binits{A.}},
\bauthor{\bsnm{Massari}, \binits{M.}},
\bauthor{\bsnm{Lizia}, \binits{P.D.}},
\bauthor{\bsnm{Armellin}, \binits{R.}},
\bauthor{\bsnm{Valles}, \binits{C.}},
\bauthor{\bsnm{Ortega}, \binits{G.}}:
\bctitle{Differential algebra space toolbox for nonlinear uncertainty
  propagation in space dynamics}.
In: \bbtitle{6th International Conference on Astrodynamics Tools and Techniques
  (ICATT)}
(\byear{2016}).
\burl{http://epubs.surrey.ac.uk/813477/}
\end{bchapter}
\endbibitem

\bibitem[\protect\citeauthoryear{Berz and Makino}{1998}]{Berz1998}
\begin{barticle}
\bauthor{\bsnm{Berz}, \binits{M.}},
\bauthor{\bsnm{Makino}, \binits{K.}}:
\batitle{Verified integration of {ODEs} and flows using differential algebraic
  methods on high-order taylor models}.
\bjtitle{Reliable Computing}
\bvolume{4}(\bissue{4}),
\bfpage{361}--\blpage{369}
(\byear{1998})
\doiurl{10.1023/A:1024467732637}
\end{barticle}
\endbibitem

\bibitem[\protect\citeauthoryear{Benet and Sanders}{2019}]{Benet2019}
\begin{barticle}
\bauthor{\bsnm{Benet}, \binits{L.}},
\bauthor{\bsnm{Sanders}, \binits{D.P.}}:
\batitle{Taylorseries.jl: Taylor expansions in one and several variables in
  julia}.
\bjtitle{Journal of Open Source Software}
\bvolume{4}(\bissue{36}),
\bfpage{1043}
(\byear{2019})
\doiurl{10.21105/joss.01043}
\end{barticle}
\endbibitem

\bibitem[\protect\citeauthoryear{P{\'e}rez-Hern{\'a}ndez and
  Benet}{2019}]{perezTaylorIntegration}
\begin{botherref}
\oauthor{\bsnm{P{\'e}rez-Hern{\'a}ndez}, \binits{J.A.}},
\oauthor{\bsnm{Benet}, \binits{L.}}:
PerezHz/TaylorIntegration.jl: TaylorIntegration V0.4.1.
\doiurl{10.5281/zenodo.2562353} .
\url{https://doi.org/10.5281/zenodo.2562353}
\end{botherref}
\endbibitem

\bibitem[\protect\citeauthoryear{Rackauckas and Nie}{2017}]{RackauckasQing}
\begin{botherref}
\oauthor{\bsnm{Rackauckas}, \binits{C.}},
\oauthor{\bsnm{Nie}, \binits{Q.}}:
Differentialequations.jl -- a performant and feature-rich ecosystem for solving
  differential equations in julia.
The Journal of Open Research Software
\textbf{5}(1)
(2017)
\doiurl{10.5334/jors.151} .
Exported from https://app.dimensions.ai on 2019/05/05
\end{botherref}
\endbibitem

\bibitem[\protect\citeauthoryear{Kumar et~al.}{2025}]{kumar2025siads}
\begin{barticle}
\bauthor{\bsnm{Kumar}, \binits{B.}},
\bauthor{\bsnm{Anderson}, \binits{R.L.}},
\bauthor{\bsnm{Llave}, \binits{R.}}:
\batitle{Rapid gpu-assisted search and parameterization-based refinement and
  continuation of connections between tori in periodically perturbed planar
  circular restricted 3-body problems}.
\bjtitle{SIAM Journal on Applied Dynamical Systems}
\bvolume{24}(\bissue{1}),
\bfpage{219}--\blpage{258}
(\byear{2025})
\doiurl{10.1137/23M1608057}
{\href{https://arxiv.org/abs/https://doi.org/10.1137/23M1608057}{{https://doi.org/10.1137/23M1608057}}}
\end{barticle}
\endbibitem

\bibitem[\protect\citeauthoryear{Thirring}{1992}]{thirring}
\begin{bbook}
\bauthor{\bsnm{Thirring}, \binits{W.}}:
\bbtitle{A Course in Mathematical Physics. Walter Thirring ; Translated by
  Evans M. Harrell [t.] 1 and 2, Classical Dynamical Systems and Classical
  Field Theory},
\bedition{2nd ed} edn.
\bpublisher{Springer},
\blocation{New York;}
(\byear{1992})
\end{bbook}
\endbibitem

\end{thebibliography}
\bibliographystyle{sn-mathphys-num}

\end{document}